\documentclass[a4paper,12pt]{amsart}
\usepackage[T1]{fontenc}
\usepackage[english]{babel}
\usepackage[left=1in,right=1in,top=1.1in,bottom=1.1in]{geometry}

\allowdisplaybreaks

\usepackage{mathtools}
\usepackage{times}
\usepackage[nopatch=footnote]{microtype}
\usepackage{mathrsfs,amssymb,array,multirow,comment,pifont}
\usepackage{enumitem}
\usepackage{tikz-cd}

\usepackage[dvipsnames,svgnames,table]{xcolor}
\definecolor{red}{RGB}{255,25,25}
\definecolor{blue}{RGB}{25,50,200}

\usepackage{thmtools}   

\usepackage[pagebackref,linktocpage]{hyperref}

\hypersetup{
hypertexnames=false,
pdftitle={},
pdfauthor={Hu--Jiang},	
colorlinks=true,			
linkcolor=red,			
citecolor=MidnightBlue,	
filecolor=magenta,		
urlcolor=red	
}

\usepackage{cleveref}	

\newtheorem{theorem}{Theorem}[section]
\crefname{theorem}{Theorem}{Theorems}
\newtheorem{lemma}[theorem]{Lemma}
\crefname{lemma}{Lemma}{Lemmas}
\newtheorem{proposition}[theorem]{Proposition}
\crefname{proposition}{Proposition}{Propositions}

\crefname{prop}{Proposition}{Propositions}
\newtheorem{corollary}[theorem]{Corollary}
\crefname{corollary}{Corollary}{Corollaries}

\crefname{cor}{Corollary}{Corollaries}

\crefname{conjecture}{Conjecture}{Conjectures}

\crefname{conj}{Conjecture}{Conjectures}
\newtheorem*{conj*}{Conjecture}
\crefname{conj}{Conjecture}{Conjectures}

\crefname{thmx}{Theorem}{Theorems}

\theoremstyle{definition}
\newtheorem{definition}[theorem]{Definition}
\crefname{definition}{Definition}{Definitions}

\crefname{defn}{Definition}{Definitions}
\newtheorem{example}[theorem]{Example}
\crefname{example}{Example}{Examples}

\crefname{notation}{Notation}{Notation}
\newtheorem*{notation*}{Notation}
\crefname{notation}{Notation}{Notation}
\newtheorem*{convention*}{Convention}
\crefname{convention}{Convention}{Convention}

\crefname{problem}{Problem}{Problems}

\crefname{question}{Question}{Questions}

\crefname{condition}{Condition}{Conditions}

\crefname{assumption}{Assumption}{Assumptions}

\theoremstyle{remark}

\crefname{rmk}{Remark}{Remarks}
\newtheorem*{rmk*}{Remark}
\crefname{rmk}{Remark}{Remarks}
\newtheorem{remark}[theorem]{Remark}
\crefname{remark}{Remark}{Remarks}

\crefname{fact}{Fact}{Facts}

\crefname{claim}{Claim}{Claims}
\newtheorem*{claim*}{Claim}
\crefname{claim}{Claim}{Claims}

\crefname{step}{Step}{Steps}

\crefname{case}{Case}{Cases}

\numberwithin{equation}{section}

\DeclareMathOperator{\Amp}{Amp}

\DeclareMathOperator{\Eff}{Eff}
\DeclareMathOperator{\End}{End}

\DeclareMathOperator{\GL}{GL}

\DeclareMathOperator{\Hom}{Hom}
\DeclareMathOperator{\id}{id}

\DeclareMathOperator{\Mat}{M}

\DeclareMathOperator{\Nef}{Nef}

\DeclareMathOperator{\PL}{PL}
\DeclareMathOperator{\plov}{plov}
\DeclareMathOperator{\pr}{pr}

\DeclareMathOperator{\supp}{supp}
\DeclareMathOperator{\Sym}{Sym}

\newcommand{\isom}{\simeq}

\newcommand{\num}{\equiv}

\newcommand{\bC}{\mathbf{C}}

\newcommand{\bG}{\mathbf{G}}
\newcommand{\bH}{\mathbf{H}}

\newcommand{\bk}{\mathbf{k}}

\newcommand{\bP}{\mathbf{P}}
\newcommand{\bQ}{\mathbf{Q}}
\newcommand{\bR}{\mathbf{R}}

\newcommand{\bZ}{\mathbf{Z}}

\newcommand{\ba}{\boldsymbol{a}}

\newcommand{\bh}{\boldsymbol{h}}
\newcommand{\bl}{\boldsymbol{\ell}}

\newcommand{\bn}{\boldsymbol{n}}
\newcommand{\bq}{\boldsymbol{q}}
\newcommand{\br}{\boldsymbol{r}}

\newcommand{\bt}{\boldsymbol{t}}

\newcommand{\bv}{\boldsymbol{v}}

\newcommand{\bx}{\boldsymbol{x}}

\newcommand{\bone}{\boldsymbol{1}}

\newcommand{\cC}{\mathcal{C}}

\newcommand{\cK}{\mathcal{K}}

\newcommand{\cT}{\mathscr{T}}
\newcommand{\cU}{\mathcal{U}}

\newcommand{\gl}{\mathfrak{gl}}
\newcommand{\stab}{\mathfrak{stab}}

\newcommand{\tot}{\mathrm{tot}}

\newcommand{\sL}{\mathscr{L}}

\newcommand{\sO}{\mathscr{O}}

\newcommand{\Comp}{\mathsf{Comp}}

\newcommand{\N}{\mathsf{N}}

\newcommand{\T}{\mathsf{T}}

\newcommand{\arxiv}[1]{\href{https://arxiv.org/abs/#1}{\texttt{arXiv:#1}}}
\newcommand{\doi}[1]{\href{https://doi.org/#1}{\texttt{doi:#1}}}
\providecommand{\MR}[1]{}
\renewcommand{\MR}[1]{\href{https://mathscinet.ams.org/mathscinet-getitem?mr=#1}{MR#1}}

\newcommand{\CJ}[1]{{\color{blue} J: #1}}
\newcommand{\FH}[1]{{\color{red} H: #1}}

\title[Uniform Tur\'an estimates and sharp bounds]{Uniform Tur\'an estimates and sharp bounds for degree and mixed orbit growth}

\author{Fei Hu}
\address{School of Mathematics, Nanjing University, Nanjing, China}
\email{\href{mailto:fhu@nju.edu.cn}{fhu@nju.edu.cn}}

\author{Chen Jiang}
\address{Shanghai Center for Mathematical Sciences \& School of Mathematical Sciences, Fudan University, Shanghai, China}
\email{\href{mailto:chenjiang@fudan.edu.cn}{chenjiang@fudan.edu.cn}}

\subjclass[2020]{
14J50, 
14C17, 
37F80, 
32Q15. 
}

\keywords{endomorphism, degree growth, dynamical degree, log-concavity, Tur\'an estimate, Hodge--Lorentz package, discrete Laplacian, zero-entropy automorphism, mixed orbit growth}

\thanks{The first author was supported by NSFC Grant \#12371045.
The second author was supported by the National Key Research and Development Program of China \#2023YFA1010600 and NSFC for Innovative Research Groups \#12121001.}

\date{}

\begin{document}

\begin{abstract}
The degrees of the iterates of a projective endomorphism grow in two layers: an exponential rate measured by the dynamical degrees, and a polynomial correction carried by the peripheral Jordan blocks.
Log-concavity constrains the first layer; we show that the Hodge index theorem already constrains the second, in every codimension and in arbitrary characteristic.

Let $f$ be a surjective endomorphism of a normal projective $d$-fold over an algebraically closed field, with dynamical degrees $\lambda_i$, so that $\textrm{deg}_i(f^n)\asymp\lambda_i^n n^{\mu_i}$ for a unique integer $\mu_i\ge0$.
We prove that these exponents are coupled across three adjacent codimensions: strict log-concavity of $(\lambda_i)_i$ at $i$ forces $\mu_i=0$, while equality gives
\[
0\ \le\ 2\mu_i-\mu_{i-1}-\mu_{i+1}\ \le\ 4 .
\]
Consequently, $\mu_i\le2i(d-i)$, and this is sharp for every $i$.
For a zero-entropy automorphism, we prove that every mixed orbit function indexed by any weak composition $\gamma$ of $d$ is a multivariate quasipolynomial
of even total degree at most $d^2-\sum_j\gamma_j^2$.
For a zero-entropy holomorphic automorphism $g$ of a compact K\"ahler $d$-fold $Y$, the same method gives
\[
\left\|(g^n)^*|_{H^{p,q}(Y,\mathbb{C})}\right\|
=O\bigl(n^{p(d-p)+q(d-q)}\bigr).
\]
Powers of elliptic curves attain all exponent bounds as well as the constant $4$.
\end{abstract}

\maketitle


\section{Introduction}

Log-concavity is the fundamental global constraint on dynamical degrees.
It is therefore natural to ask whether the same Hodge-theoretic structure also controls the peripheral Jordan blocks, and hence the polynomial factors in degree growth.
Optimal dimension-only bounds for these factors were previously known only in codimension one for general zero-entropy automorphisms \cite{DLOZ22,HJ25}.
We answer this question in every codimension for general endomorphisms and, at zero entropy, obtain simultaneous control of all mixed orbit intersections.

Throughout, unless otherwise stated, we work over an algebraically closed field $\bk$ of arbitrary characteristic.
Let $X$ be a normal projective variety of dimension $d$ over $\bk$ and let $f\colon X\to X$ be a surjective endomorphism.
For $0\le i\le d$, let $\N_i(X)_\bR$ denote the finite-dimensional real vector space of numerical classes of $i$-dimensional cycles and set
\[
\N^i(X)_\bR\coloneqq\Hom_\bR\bigl(\N_i(X)_\bR,\bR\bigr).
\]
We call $\N^i(X)_{\bR}$ the space of \textit{numerical dual classes}.
Proper pushforward on $\N_i(X)_\bR$ induces pullback on $\N^i(X)_\bR$.
The space $\N^1(X)_\bR$ agrees with the usual real N\'eron--Severi space, and when $X$ is smooth, $\N^i(X)_\bR$ is canonically isomorphic with the space of codimension-$i$ cycles modulo numerical equivalence.
We refer to \cite{FL17a} for further details.

Fix any norm $\|\cdot\|$ on each $\N^i(X)_\bR$ and any ample Cartier divisor $H$ on $X$.
For $0\le i\le d$ and $n\in\bZ_{\ge0}$, define the $i$-th degree of the iterate $f^n$ by
\[
\deg_i(f^n)\coloneqq (f^n)^*H^i\cdot H^{d-i}.
\]
The standard degree--norm comparison for endomorphisms, stated precisely in \cref{prop:degree-norm-comparison}, shows that the limit
\[
\lambda_i(f) \coloneqq \lim_{n\to +\infty} \bigl(\deg_i(f^n)\bigr)^{1/n}
\]
exists and equals the spectral radius of $f^*|_{\N^i(X)_\bR}$.
Let $\mu_i+1$ be the largest size of Jordan blocks of $f^*|_{\N^i(X)_\bC}$ associated with eigenvalues of modulus $\lambda_i(f)$.
We call $\lambda_i(f)$ the \textit{$i$-th (numerical) dynamical degree} of $f$ and $\mu_i$ the \emph{$i$-th peripheral polynomial-growth exponent} of $f$.


\subsection{Peripheral polynomial growth}

For every $n\ge1$, the Khovanskii--Teissier inequality implies that the degree sequence $(\deg_i(f^n))_{i=0}^d$ is log-concave.
Taking $n$-th roots and passing to the limit, we obtain the log-concavity of the dynamical-degree sequence:
\begin{equation}\label{eq:lambda-log-concave}
\lambda_i(f)^2\ge\lambda_{i-1}(f)\lambda_{i+1}(f)\qquad(1\le i\le d-1).
\end{equation}
Moreover, the Khovanskii--Teissier inequality and the degree--norm comparison, \cref{prop:degree-norm-comparison}, imply that whenever $\lambda_i(f)^2=\lambda_{i-1}(f)\lambda_{i+1}(f)$, one automatically has the concavity of the peripheral polynomial-growth exponents
\[
2\mu_i-\mu_{i-1}-\mu_{i+1}\ge0.
\]

Our new contribution is to bound this discrete Laplacian from above and to determine what happens at a strict break in the dynamical-degree sequence.
Our first main result shows that a strict break at $i$ implies peripheral semisimplicity in codimension $i$, that is, $\mu_i=0$, whereas equality yields a sharp upper bound for the discrete Laplacian.

\begin{definition}[Discrete Laplacian and maximal affine interval]
\label{def:maximal-affine-interval}
For a real sequence $(a_0,\dots,a_d)$, define its \textit{discrete Laplacian} by
\[
\Delta a_i \coloneqq 2a_i-a_{i-1}-a_{i+1}, \qquad 1\le i\le d-1.
\]
We intentionally suppress the negative sign by convention.
Note that $(a_i)_i$ is a concave sequence if and only if $\Delta a_i\ge0$ for all $1\le i\le d-1$.

An index interval $[r,s]$, with $0\le r<s\le d$, is \emph{affine} for $(a_i)_i$ if $\Delta a_i=0$ for every $r<i<s$,
and is \emph{maximal affine} if it is maximal under inclusion among affine index intervals.

\end{definition}

\begin{theorem}[Peripheral polynomial growth]
\label{thm:endomorphism-peripheral-growth}
Let $X$ be a normal projective variety of dimension $d\ge2$ over $\bk$,
and let $f\colon X\to X$ be a surjective endomorphism.
For every $1\le i\le d-1$, let $\mu_i$ be the $i$-th peripheral polynomial-growth exponent of $f$.
Then the following hold.
\begin{enumerate}[label=\textnormal{(\arabic*)}]
\item If $\lambda_i(f)^2>\lambda_{i-1}(f)\lambda_{i+1}(f)$, then $\mu_i=0$.
Equivalently, every Jordan block of $f^*|_{\N^i(X)_\bC}$ associated with an eigenvalue of modulus $\lambda_i(f)$ has size one.

\item If $\lambda_i(f)^2=\lambda_{i-1}(f)\lambda_{i+1}(f)$, then
\begin{equation}\label{eq:intro-discrete-Laplacian}
0\le \Delta \mu_i = 2\mu_i-\mu_{i-1}-\mu_{i+1}\le4.
\end{equation}
\end{enumerate}
Consequently, if $[r,s]$ is a maximal affine interval of $(\log\lambda_i(f))_i$, then
\[
\mu_i\le2(i-r)(s-i)
\qquad(r\le i\le s).
\]
In particular,
\[
\mu_i\le2i(d-i)
\qquad(0\le i\le d).
\]
\end{theorem}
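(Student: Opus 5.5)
Throughout, write $\lambda_j=\lambda_j(f)$. The plan is to reduce both assertions to a single two-sided estimate on degrees, and then argue by contradiction against a Hodge-index-type inequality for mixed intersections. By \cref{prop:degree-norm-comparison}, $\deg_j(f^n)\asymp\lambda_j^n n^{\mu_j}$ for every $j$. It therefore suffices to prove an inequality of the form
\[
\deg_i(f^n)^2\ \lesssim\ n^{4}\,\deg_{i-1}(f^n)\,\deg_{i+1}(f^n)
\]
whenever $\lambda_i^2=\lambda_{i-1}\lambda_{i+1}$. In the strict case, the analogue to aim for is that the peripheral part of $(f^n)^*$ on $\N^i$ is bounded.

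The natural tool is a reverse Khovanskii--Teissier inequality (the "Hodge--Lorentz package"). The degree $\deg_i(f^n)$ is a mixed intersection $(f^n)^*H^i\cdot H^{d-i}$. I would write $(f^n)^*H^{i}$ as a product of $(f^n)^*H$ factors and try to compare it with $((f^n)^*H)^{i-1}\cdot H^{d-i+1}$ and $((f^n)^*H)^{i+1}\cdot H^{d-i-1}$. The comparison comes from restricting to the two-dimensional slice cut out by $((f^n)^*H)^{i-1}\cdot H^{d-i-1}$, where the Hodge index theorem applies to the pair $A=(f^n)^*H$ and $B=H$.

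The key step is to split $A=cB+A^\perp$ with respect to this Lorentzian form. The Hodge index theorem bounds $(A^\perp)^2$ negatively, and the goal is to show that $A^\perp$ contributes at most a factor $n^2$ on each side. To get this, I would decompose $\N^1$ into the Jordan filtration of $f^*$ and track the polynomial degrees in $n$ of each mixed term $(f^n)^*x\cdot(f^n)^*y\cdots$ via the eigenvector and generalized-eigenvector expansion. The "4" should arise as follows: a Jordan chain $v_0,v_1,v_2$ with $v_0$ nef gives $(f^n)^*v_2\approx \lambda^n(n^2v_0/2+nv_1+v_2)$, and the Hodge index forces $v_0^2=v_0v_1=0$ on the slice, so mixed products lose at most $n^2$ in each of the two directions.

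In the strict case $\lambda_i^2>\lambda_{i-1}\lambda_{i+1}$, the same slice argument shows that any Jordan block of size at least two at modulus $\lambda_i$ would produce a nef class $v_0\in\N^i$ with $(f^n)^*v_0=\lambda^n v_0$ and a companion $v_1$. A Cauchy--Schwarz/Hodge inequality for the pairing against $\N^{i-1}$ and $\N^{i+1}$ then yields $\lambda_i^2\le\lambda_{i-1}\lambda_{i+1}$ on the corresponding peripheral part, which is a contradiction; hence $\mu_i=0$.

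For the consequences, note that in the affine case $\Delta\mu_j\le4$ holds on $[r,s]$. Since $[r,s]$ is maximal, there is a strict break at each endpoint, so $\mu_r=\mu_s=0$ by part (1); at $0$ and $d$ one has $\mu_0=\mu_d=0$ trivially. A discrete maximum principle then bounds $\mu_j$ by the solution $g$ of $\Delta g=4$ with $g(r)=g(s)=0$, namely $g(j)=2(j-r)(s-j)$. Taking $r=0$, $s=d$ gives $\mu_i\le 2i(d-i)$.

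The main obstacle is making the upper bound in (2) rigorous in higher codimension, where $\N^i$ is not generated by products of divisors. There I expect to need the paper's Hodge--Lorentz package for numerical dual classes, applied to classes of the form $(f^n)^*H^{i}$ rather than to arbitrary elements of $\N^i$.
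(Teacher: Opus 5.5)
Your reduction is sound, and your slice is the right one. With $D_j=\deg_j(f^n)$, $x=(f^n)^*H$, $y=H$ and $B_n(u,v)=u\cdot v\cdot x^{i-1}\cdot H^{d-i-1}$ on $\N^1(X)_\bR$, one has $B_n(x,y)=D_i$, $B_n(x,x)=D_{i+1}$ and $B_n(y,y)=D_{i-1}$.

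The gap is that the Hodge index theorem for the \emph{pair} $x,y$ on this slice only yields $B_n(x,y)^2\ge B_n(x,x)B_n(y,y)$. That is the Khovanskii--Teissier bound $D_i^2\ge D_{i-1}D_{i+1}$; it gives $\Delta\mu_i\ge0$, which you should state. Writing $x=cy+x^\perp$ only rewrites it as $D_{i-1}D_{i+1}=D_i^2+D_{i-1}B_n(x^\perp,x^\perp)$ with $B_n(x^\perp,x^\perp)\le0$. The upper bound $D_i^2\lesssim n^4D_{i-1}D_{i+1}$ needs a quantitative lower bound on $D_{i-1}D_{i+1}/D_i^2$. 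No two-vector signature argument provides one, let alone uniformly in $n$, even though $B_n$ itself varies with $n$.

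The Jordan-chain heuristic does not fill this. Nothing forces $v_0^2=v_0v_1=0$ on an $n$-dependent slice. Moreover, peripheral blocks on $\N^1$ can be long (size $2d-1$ for the automorphism $u$ of $E^d$ in the appendix), so the $4$ cannot come from a chain of length three. Your argument for (1) has the same problem more sharply. There is no Cauchy--Schwarz pairing of $\N^i$ against $\N^{i\pm1}$ to invoke, and Hodge-type inequalities point the wrong way: they give $\lambda_i^2\ge\lambda_{i-1}\lambda_{i+1}$, never the reverse.

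The missing idea is a \emph{four-point} (Ptolemy) Lorentzian inequality, applied to the orbit together with an infinitesimally perturbed orbit. Write $f^*=Se^N$ on $\N^1(X)_\bR$ (multiplicative Jordan--Chevalley). Then $N$ commutes with $f^*$ and lies in the infinitesimal stabilizer of the intersection form.

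Fix $\epsilon>0$ once so that $H+\epsilon NH$ and $H-\epsilon(H+\epsilon NH)$ are ample. Since $(f^n)^*$ preserves ampleness and commutes with $N$, the classes $x_\epsilon=x+\epsilon Nx$ and $x-\epsilon x_\epsilon$ are ample for every $n$, with the same $\epsilon$. Ptolemy for $x,y,x_\epsilon,y_\epsilon$ on $B_n$ (nonpositivity of the determinant of the zero-diagonal $4\times4$ Gram matrix) then gives
\[
\bigl|B_n(x,y)B_n(Nx,Ny)-B_n(Nx,y)B_n(x,Ny)\bigr|\le C\,B_n(x,y)\sqrt{B_n(x,x)B_n(y,y)},
\]
with $C=2\epsilon^{-4}$ independent of $n$.

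The left side equals $|\phi'(n)^2-\phi(n)\phi''(n)|/(i(d-i))$ for $\phi(t)=Q\bigl((e^{(t-n)N}x)^{[i]},H^{[d-i]}\bigr)$. A matching lower bound comes from the exponential--polynomial expansion of $\phi$ together with $D_i\asymp\lambda_i^nn^{\mu_i}$:
\[
\phi'(n)^2-\phi(n)\phi''(n)=n^{-2}D_i^2\bigl(\mu_i+O(n^{-1})\bigr).
\]
Hence $\mu_i>0$ implies $D_i\lesssim n^2\sqrt{D_{i-1}D_{i+1}}$. This proves (1), by exponential contradiction at a strict break, and (2) at once. The exponent $4$ is the square of the $n^{-2}$ loss in this log-Tur\'an expression.

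Everything happens on $\N^1$ with top intersection numbers, so the higher-codimension obstacle you anticipate never arises; $\N^i$ enters only through the degree--norm comparison. Your derivation of the affine-interval and global bounds from (1) and (2) is correct.
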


To our knowledge, the upper bound in \eqref{eq:intro-discrete-Laplacian} is the first universal estimate coupling the peripheral polynomial-growth exponents in three adjacent codimensions.
The resulting discrete Laplacian constraint is the basic new phenomenon.

We say that $f$ has \emph{zero entropy} if $\lambda_1(f)=1$, equivalently, $\lambda_i(f)=1$ for all $i$ (see, e.g., \cite[Lemma~2.2]{Hu-GK-AV}).
In particular, such an $f$ is necessarily an automorphism.
Since $f^*$ acts invertibly on the integral N\'eron--Severi lattice modulo torsion, this is equivalent to quasi-unipotence of $f^*|_{\N^1(X)_\bR}$ by Kronecker's theorem.
When $X$ is smooth over $\bC$, it is also equivalent to vanishing topological entropy by the Gromov--Yomdin theorem \cite{Gromov03,Yomdin87}.

\begin{corollary}
\label{cor:zero-entropy-degree-growth}
Under the hypotheses of \cref{thm:endomorphism-peripheral-growth}, assume further that $f$ has zero entropy.
Then $\mu_i=\mu_{d-i}\in2\bZ_{\ge0}$ for every $0\le i\le d$ and, for every $1\le i\le d-1$,
\[
\Delta \mu_i = 2\mu_i-\mu_{i-1}-\mu_{i+1}\in\{0,2,4\}.
\]
Moreover,
\[
\deg_i(f^n)
\asymp
\left\|(f^n)^*|_{\N^i(X)_\bR}\right\|
\asymp
n^{\mu_i}
=O(n^{2i(d-i)})
\qquad (n\to\infty).
\]
\end{corollary}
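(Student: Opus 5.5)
We would deduce the corollary from \cref{thm:endomorphism-peripheral-growth} together with the degree--norm comparison \cref{prop:degree-norm-comparison} and a duality argument. Zero entropy gives $\lambda_i(f)=1$ for all $i$, so $(\log\lambda_i)_i\equiv0$ and the whole interval $[0,d]$ is affine; thus part (2) of the theorem applies at every $1\le i\le d-1$, giving $0\le\Delta\mu_i\le4$ and $\mu_i\le 2i(d-i)$. Also $\mu_0=\mu_d=0$ trivially, since $\N^0$ and $\N^d$ are one-dimensional and $f$ is an automorphism. The asymptotics $\deg_i(f^n)\asymp\|(f^n)^*|_{\N^i}\|$ is the degree--norm comparison. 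Moreover, $\|(f^n)^*\|\asymp n^{\mu_i}$ holds because all eigenvalues have modulus one (quasi-unipotence on $\N^i$: $f^*$ preserves the integral lattice of $\N_i$ modulo torsion, is invertible since $f$ is an automorphism, and has spectral radius $1$, so Kronecker applies). For such a matrix, the norm of the $n$-th power grows exactly like $n^{(\text{largest Jordan block})-1}$. The remaining content is the symmetry $\mu_i=\mu_{d-i}$ and evenness.

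\textbf{Symmetry.} We would compare $\deg_i(f^n)$ with $\deg_{d-i}(f^{-n})$. Since $f$ is an automorphism and $f_*=(f^{-1})^*$ on classes,
\[
\deg_i(f^n)=(f^n)^*H^i\cdot H^{d-i}=H^i\cdot (f^{-n})^*H^{d-i}=\deg_{d-i}(f^{-n}).
\]
So $\mu_i(f)=\mu_{d-i}(f^{-1})$. It then suffices to show $\mu_i(f^{-1})=\mu_i(f)$. This holds because $(f^{-1})^*=(f^*)^{-1}$ on $\N^i$, and a unipotent-times-finite-order matrix has the same Jordan block sizes as its inverse (inversion preserves the Jordan structure for each eigenvalue $\zeta\mapsto\zeta^{-1}$). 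Hence $\mu_i=\mu_{d-i}$.

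\textbf{Evenness.} This is the main obstacle. Our plan is to pass to a power $f^m$ so that $f^*$ is unipotent on every $\N^i$; this does not change $\mu_i$. We would then use the Hodge index/Lorentzian structure: with $N=\log (f^m)^*$ on $\N^1$, the leading coefficient of $\deg_1(f^{mn})$ is, up to a constant, $N^{\mu_1}H\cdot H^{d-1}$. Positivity of the degree forces the top-power term to be nonzero with a definite sign. The standard argument for $\mu_1$ even (as in \cite{DLOZ22,HJ25}) is that the maximal Jordan block vector $v=N^{\mu_1}H$ is nef and nonzero with $v\cdot N^kH\cdot(\cdots)$ relations. In particular, $\deg_1(f^{n})\sim c\,n^{\mu_1}$ must also equal $\deg_{d-1}(f^{-n})$, and analogously, pairing the leading terms of $f^n$ and $f^{-n}$ introduces the sign $(-1)^{\mu_i}$ from $\log\bigl((f^*)^{-1}\bigr)=-N$. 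Concretely, we would write
\[
\deg_i(f^{n})=\sum_k \frac{n^k}{k!}\,N^kH^i\cdot H^{d-i},
\]
with the top coefficient $c_{\mu_i}=N^{\mu_i}H^i\cdot H^{d-i}/\mu_i!$ positive. Computing $\deg_i(f^{-n})$ gives the top coefficient $(-1)^{\mu_i}c_{\mu_i}$, which must also be positive since degrees are positive for all $n$. Hence $\mu_i$ is even. The subtle point we would check is that the top coefficient of the degree genuinely equals the pairing of the top Jordan power (nonvanishing), which is ensured by the comparison $\deg_i\asymp n^{\mu_i}$.

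Finally, since all $\mu_i$ are even, each $\Delta\mu_i$ is even and lies in $[0,4]$, so $\Delta\mu_i\in\{0,2,4\}$. The bound $O(n^{2i(d-i)})$ is the last assertion of the theorem.
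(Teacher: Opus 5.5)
Your proof is correct and is essentially the paper's argument. The paper passes to an iterate with $f^*|_{\N^1(X)_\bR}$ unipotent and applies \cref{thm:two-block-model} together with \cref{prop:degree-norm-comparison}; there, parity and $\mu_i=\mu_{d-i}$ come from positivity of the degree polynomial on all of $\bZ$ and from $\phi_i(n)=\phi_{d-i}(-n)$ (with $h_1=h_2=H$), which are exactly your observations that $(f^{-n})^*H$ is ample and that $\deg_i(f^n)=\deg_{d-i}(f^{-n})$, i.e.\ the geometric form of diagonal $N$-invariance. The remaining differences are minor: you take $0\le\Delta\mu_i\le4$ from \cref{thm:endomorphism-peripheral-growth} rather than from the unipotent two-block model, and your Kronecker argument on every $\N^i(X)_\bR$ is unnecessary, since unipotence on $\N^1(X)_\bR$ already makes $n\mapsto((f^n)^*H)^i\cdot H^{d-i}$ a polynomial on $\bZ$.
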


\begin{remark}
At a strict break, i.e., the inequality \eqref{eq:lambda-log-concave} is strict, \cref{thm:endomorphism-peripheral-growth} gives semisimplicity only on the peripheral spectrum: it implies neither that $\lambda_i(f)$ is simple nor that the entire action on $\N^i(X)_\bC$ is semisimple, although Jordan blocks at smaller-modulus eigenvalues affect only exponentially lower-order terms.
Dang--Favre \cite[Theorem~1.1]{DF21} proved a stronger codimension-one asymptotic for dominant rational maps in characteristic zero, and Favre--Wulcan \cite[Theorem~D]{FW12} proved a sharper arbitrary-codimension asymptotic for rational monomial maps; our result instead treats every codimension for projective endomorphisms in arbitrary characteristic and gives the sharp discrete-Laplacian bound and the resulting affine-interval bound.
All three values $0,2,4$ in \cref{cor:zero-entropy-degree-growth} can occur simultaneously.
\end{remark}

The same proof applies to surjective holomorphic endomorphisms of compact K\"ahler manifolds after replacing $\N^i(X)_\bR$ by $H^{i,i}(Y,\bR)$ and an ample class by a K\"ahler class: the required mass--norm comparison holds, while pullback preserves the K\"ahler cone and acts as a similitude of the intersection form.
For zero-entropy automorphisms, combining the resulting diagonal estimates with \cite{Dinh05} gives the following Hodge-bidegree bound.

\begin{corollary}
\label{cor:kahler-growth}
Let $Y$ be a compact K\"ahler manifold of dimension $d\ge2$, and let $g\colon Y\to Y$ be a holomorphic automorphism of zero topological entropy.
Then for all $0\le p,q\le d$,
\begin{equation}\label{eq:intro-kahler-pq}
\left\|(g^n)^*|_{H^{p,q}(Y,\bC)}\right\|
=O\bigl(n^{p(d-p)+q(d-q)}\bigr)\qquad (n\to\infty).
\end{equation}
Equivalently, every Jordan block of $g^*|_{H^{p,q}(Y,\bC)}$ has size at most $p(d-p)+q(d-q)+1$.
\end{corollary}

\begin{remark}
Polynomial degree and norm growth for zero-entropy automorphisms was known for surfaces (see, e.g., \cite[Lemma~5.4]{AVdB90} or \cite[Appendix]{DF01}).
It was revived by Lo Bianco \cite{LB19}, who proved $\mu_1\le4$ in dimension three.
Dinh--Lin--Oguiso--Zhang \cite[Theorem~1.1]{DLOZ22} proved the optimal bound $\mu_1\le2d-2$ in the K\"ahler setting and
obtained the general estimate $O\bigl(n^{(p'+q')(d-1)}\bigr)$ on $H^{p,q}$,
where $p'=\min\{p,d-p\}$ and $q'=\min\{q,d-q\}$;
the first-degree bound was later extended to arbitrary characteristic in \cite[Corollary~1.6]{HJ25}.
The exponent in \eqref{eq:intro-kahler-pq} improves their Hodge-bidegree exponent by $p'(p'-1)+q'(q'-1)$ and is sharp for every $(p,q)$ \cite[Remark~4.1]{DLOZ22}; see also \cref{thm:sharpness-main}\ref{thm:abelian-hodge-sharpness}.
Our result also recovers \cite[Theorem~1.1]{DLOZ22} and \cite[Corollary~1.6]{HJ25} by a different method; see \cref{rmk:specializations}.
\end{remark}

\subsection{Zero-entropy mixed orbit growth}\label{sec:1.2}

Using our framework of dynamical Hodge--Lorentz packages, we next study mixed orbit functions for zero-entropy automorphisms associated with arbitrary weak compositions.

Denote the finite set of weak compositions of $d$ into $d$ ordered parts by
\[
\Comp(d)
\coloneqq
\left\{
\gamma=(\gamma_1,\ldots,\gamma_d)\in\bZ_{\ge0}^d:
|\gamma|\coloneqq\sum_{j=1}^d\gamma_j=d
\right\}.
\]
For $\gamma\in\Comp(d)$, a tuple $\bH=(H_1,\ldots,H_d)$ of ample Cartier divisors on $X$, and $\bn=(n_1,\ldots,n_d)\in\bZ_{\ge0}^d$, the main object is the following \textit{mixed orbit function}
\[
\Phi_{f,\gamma,\bH}(\bn)
\coloneqq
\prod_{j=1}^d
\bigl((f^{n_j})^*H_j\bigr)^{\gamma_j},
\]
where the product of $d$ divisors denotes the top intersection number.
If $f$ is an automorphism, then $\Phi_{f,\gamma,\bH}$ naturally extends to $\bZ^d$.

A function on $\bZ^d$ is a \emph{multivariate quasipolynomial} if it agrees with a polynomial on each residue class modulo some positive integer; its total degree is the largest degree of these constituent polynomials.

\begin{theorem}[Zero-entropy mixed orbit growth]
\label{thm:zero-entropy-mixed}
Let $X$ be a normal projective variety of dimension $d\ge2$ over $\bk$, and let $f\colon X\to X$ be a zero-entropy automorphism.
For every $\gamma\in\Comp(d)$ and every tuple $\bH$ of ample Cartier divisors, $\Phi_{f,\gamma,\bH}$ is a multivariate quasipolynomial whose period may be chosen to depend only on $f$.
Every constituent polynomial has even total degree, and the total degree of the
quasipolynomial satisfies
\begin{equation}\label{eq:intro-zero-entropy-mixed-bound}
\deg_{\tot}\Phi_{f,\gamma,\bH}
\le
b_\gamma\coloneqq
 d^2-\|\gamma\|_2^2
 =
 2\sum_{1\le r<s\le d}\gamma_r\gamma_s.
\end{equation}
In particular, $\Phi_{f,\gamma,\bH}(\bn)=O\bigl(\|\bn\|_\infty^{b_\gamma}\bigr)$, and for every $\bl\in\bZ^d$,
\begin{equation}\label{eq:intro-zero-entropy-ray}
\Phi_{f,\gamma,\bH}(n\bl)=O(n^{b_\gamma})
\qquad(n\to\infty).
\end{equation}
\end{theorem}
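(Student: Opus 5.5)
Replacing $f$ by a power $f^N$ handles all residue classes at once. Since $f$ has zero entropy, $f^*|_{\N^1(X)_\bR}$ is quasi-unipotent, so there is $N\ge1$, depending only on $f$, with $f^{N*}|_{\N^1(X)_\bR}=\exp(L)$ for a nilpotent $L$. For each fixed residue vector $\br\in\{0,\dots,N-1\}^d$, write $n_j=Nm_j+r_j$ and $D_j\coloneqq (f^{r_j})^*H_j$. Then $(f^{n_j})^*H_j=\exp(m_jL)D_j$ in $\N^1(X)_\bR$, and each $D_j$ is still ample because $f$ is an automorphism. This also covers negative $n_j$. By multilinearity of the intersection pairing,
\[
\Phi(\bn)=\Bigl(\prod_j(\exp(m_jL)D_j)^{\gamma_j}\Bigr)=\sum_{\ba}\frac{\bm^{\ba}}{\ba!}\cdot(\text{mixed intersection of }L^{a}D\text{'s}),
\]
which is a polynomial in $\bm$. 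So $\Phi$ is a quasipolynomial with period $N$. What remains is to bound the total degree by $b_\gamma$ and to prove that it is even.

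For the degree bound, I substitute $m_j=t\ell_j$ with $\ell\in\bZ^d$ and $t\to\infty$, and bound the growth in $t$. This controls the total degree once I know the bound for every ray, since the top-degree homogeneous part is nonzero on some integer direction. It also gives \eqref{eq:intro-zero-entropy-ray} and the $\|\bn\|_\infty$ bound. The key reduction is to use $f$-invariance of intersection numbers: acting by $(f^{-Nt\ell_1})^*$, I may assume $\ell_1=0$. Then I group indices with equal $\ell_j$ into blocks. For $\ell\neq\ell'$ the relative iterate is $g=f^{N(\ell-\ell')}$, again of zero entropy. The estimate I aim for is the multiplicative one: the pairing of $\gamma$ classes pulled back by one iterate against $d-\gamma$ classes pulled back by another grows like $t^{2\gamma(d-\gamma)}$, which is the codimension-$i$ bound $\mu_i\le 2i(d-i)$ of \cref{cor:zero-entropy-degree-growth}. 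For several blocks with parts $\gamma_1,\dots,\gamma_d$, I would iterate this. Moving block $r$ against the union of the later blocks costs at most $2\gamma_r(d-\gamma_r-\sum_{s<r}\gamma_s)$. Summing over $r$ gives $2\sum_{r<s}\gamma_r\gamma_s=b_\gamma$.

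The main obstacle is making this iteration rigorous. After fixing one block, the remaining classes live in a product of codimensions, not in a single $\N^i$. What I need is a bound on the norm of $(g^n)^*$ acting on the image of $\Sym^{k}\N^1$ restricted against a fixed nef class, with exponent $2k(d'-k)$ in the residual dimension $d'$. I would try first to run the dynamical Hodge--Lorentz package argument (the mechanism behind the Laplacian bound $\Delta\mu_i\le4$) on the "complete intersection" $D_1^{\gamma_1}\cdots$. Concretely, I would apply \cref{thm:endomorphism-peripheral-growth} to the restricted intersection form $\alpha\mapsto\alpha\cdot \prod_{\text{fixed}}D$, treating it as a Lorentzian package of dimension $d-\gamma_1$. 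Positivity (all classes nef) ensures that the $\asymp$ comparisons between norms and intersection numbers used there persist.

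For evenness, I use the involution $\bn\mapsto-\bn$ coming from $f^{-1}$. Pairing the growth in direction $\ell$ with that in direction $-\ell$, nefness gives $\Phi(\bn)>0$ with leading coefficient $\ge0$ for both $t\to+\infty$ and $t\to-\infty$. Hence the top homogeneous part $P_{\mathrm{top}}$ satisfies $P_{\mathrm{top}}(-\bm)=P_{\mathrm{top}}(\bm)$ wherever it is nonzero, and an odd-degree homogeneous part would change sign. So the degree is even, consistent with $\mu_i\in2\bZ$.
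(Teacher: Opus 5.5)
Your reduction to a unipotent power on residue classes matches the paper's. Your parity argument is also correct, and slightly more direct than \cref{lem:unipotent-mixed-parity}: because $f^{-1}$ also preserves ampleness, each constituent polynomial is positive on all of $\bZ^d$. So along any integer ray where its top homogeneous part $P_M$ is nonzero, $P_M(\bl)t^M$ must be positive as $t\to\pm\infty$, forcing $M$ even. The genuine gap is the degree bound $\deg_{\tot}\le b_\gamma$, which is the main content of the theorem; your iteration does not supply it. The mechanism you propose is to apply \cref{thm:endomorphism-peripheral-growth} to $Q'(\alpha_1,\ldots,\alpha_{d'})\coloneqq D_1^{\gamma_1}\cdot\alpha_1\cdots\alpha_{d'}$ with $d'=d-\gamma_1$. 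This fails. The triple $(\N^1(X)_\bR,\Amp(X),Q')$ is still a Hodge--Lorentz package, but $f^*$ is not a $Q'$-similitude, since $Q'(f^*\alpha_1,\ldots,f^*\alpha_{d'})=((f^{-1})^*D_1)^{\gamma_1}\cdot\alpha_1\cdots\alpha_{d'}$. Hence the nilpotent part is not in $\stab(Q')$, and neither \cref{thm:endomorphism-peripheral-growth} nor its abstract form \cref{thm:abstract-turan-dichotomy} applies. Both places where the two-block argument uses \cref{lem:stabQ} break. The first is the identity $\phi_{\gamma,\bn}(t_1,t_2,0,\ldots,0)=\phi_{i,n}(t_1-t_2)$, which turns the two-variable estimate of \cref{thm:turan-upper} into a one-variable Tur\'an inequality. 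The second is the endpoint normalization $\mu_{d'}=0$, which is actually false. Indeed, $D_1^{\gamma_1}\cdot(e^{tL}h)^{d'}=(e^{-tL}D_1)^{\gamma_1}\cdot h^{d'}$, and for $u$ on $E^d$ with $0<\gamma_1<d$ this grows like $t^{2\gamma_1d'}$ by \cref{thm:sharpness-main}. So no residual-dimension bound $2k(d'-k)$ holds for the restricted sequence.

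Even granting each individual cost, intersection numbers do not factor, so the costs do not add. The rigorous substitutes overcount, whether operator-norm submultiplicativity on $\N^k(X)_\bR$ or the reverse Khovanskii--Teissier inequality. Take $d=3$, $\gamma=(1,1,1)$, $\bl=(0,1,2)$ and $g\coloneqq f^N$. Pulling back by $(g^{-t})^*$ gives $\Phi=(g^{-t})^*H\cdot H\cdot(g^t)^*H$. Reverse Khovanskii--Teissier with $v_1=\omega=H$ then yields only $H^3\cdot\Phi\le 2\deg_1(g^{-t})\deg_1(g^t)=O(t^8)$, whereas $b_\gamma=6$ is attained on $E^3$.

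What you are missing is a genuinely multivariate local inequality. The paper gets it in three steps.
\begin{enumerate}
\item \cref{lem:turan-degree-detection}: diagonal invariance together with Euler's identities forces some $\cT_{rs}(\Phi_\gamma)$, with distinct $r,s\in\supp(\gamma)$, to have full degree $2\mu_\gamma-2$.
\item \cref{lem:local-transfer}: evaluating the orbit-uniform bound of \cref{thm:turan-upper} along a ray where that top part is nonzero gives $2\mu_\gamma-\mu_{\gamma^{r\leftarrow s}}-\mu_{\gamma^{s\leftarrow r}}\le 4$ for this active pair.
\item \cref{lem:transfer-maximum-principle}: comparing with $b_\gamma$, whose Laplacian along every transfer is exactly $4$, the transfer-graph maximum principle gives $\mu_\gamma\le b_\gamma$.
\end{enumerate}
The two-block bounds alone do not propagate to several distinct orbit times.
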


\begin{remark}
The mixed orbit functions in \cref{thm:zero-entropy-mixed} are related to Xie's mixed degrees \cite[\S3.2]{Xie-SC}.
For dominant rational self-maps, Xie obtained two-sided comparisons with ordinary degree sequences and determined their exponential growth up to subexponential factors \cite[Proposition~3.3 and Corollary~3.4]{Xie-SC}.
At zero entropy, these estimates do not detect the remaining polynomial correction.
In the functorial setting, our \cref{thm:zero-entropy-mixed} gives multivariate quasipolynomiality and the sharp total-degree bound \eqref{eq:intro-zero-entropy-mixed-bound}.
\end{remark}

\begin{remark}
\label{rmk:specializations}
The two basic specializations of \cref{thm:zero-entropy-mixed} at once recover degree growth and polynomial volume growth.
Taking
\[
\gamma=(i,d-i,0,\ldots,0),\qquad
\bn=(n,0,\ldots,0),\qquad
\bH=(H,\ldots,H),
\]
gives
\[
\deg_i(f^n)=\Phi_{f,\gamma,\bH}(\bn)
=O\bigl(n^{2i(d-i)}\bigr).
\]
The case when $i=1$ was proved in \cite[Corollary~1.6]{HJ25}.
On the other hand, recall that
\[
\plov(f)
\coloneqq
\limsup_{n\to+\infty}
\frac{\log\bigl((\sum_{m=0}^{n-1}(f^m)^*H)^d\bigr)}
     {\log n}.
\]
Expanding the intersection product and applying the theorem with $\gamma=(1,\ldots,1)$ gives $n^d$ terms of order $O(n^{d^2-d})$, and hence $\plov(f)\le d^2$, recovering \cite[Corollary~1.2]{HJ25}; the refined estimate $\plov(f)\le(\mu_1/2+1)d$ was obtained in \cite[Theorem~1.1]{HJ25}.
Polynomial volume growth was introduced in \cite{CPR21}, and its relation to Gelfand--Kirillov dimension was observed in \cite{LOZ25}, building on \cite{ATVdB90,AVdB90,Keeler00}; the dynamical intersection polynomials introduced in our recent work \cite{HJ-plov-lower} to prove the sharp lower bound for $\plov$ are prototypes of the present mixed orbit functions.
Thus \eqref{eq:intro-zero-entropy-mixed-bound} places these one-parameter invariants inside one sharp bound for arbitrary weak compositions.
\end{remark}

\subsection{Proof strategy}
\Cref{sec:Hodge-Lorentz} introduces dynamical Hodge--Lorentz packages $(V,\cC,Q,T)$, establishes the multiplicative Jordan--Chevalley decomposition, and proves the Lorentzian Ptolemy inequality obtained by freezing $d-2$ arguments of $Q$.
In \cref{sec:uniform-Turan}, writing $T=Se^N$ and perturbing the initial cone vectors in the $N$-direction before iteration makes the Ptolemy constant independent of the orbit times and frozen arguments.
Expanding the resulting inequality gives the orbit-uniform mixed Tur\'an upper estimate and the degree-detection criterion.

\Cref{sec:abstract-growth} begins with the unipotent two-block case and then treats general mixed unipotent orbit functions.
At zero entropy, a finite iterate of the relevant pullback action is unipotent.
Diagonal translation invariance and cone positivity give parity of the total degree; an Euler--Tur\'an identity selects an active transfer pair, and the orbit-uniform estimate supplies the local defect bound $4$.
A maximum principle on the transfer graph then compares the degree function with $b_\gamma=d^2-\|\gamma\|_2^2$,
and treating residue classes separately gives quasipolynomiality for the original zero-entropy automorphism.

For the general two-block sequence
\[
D_i(n)=Q\bigl((T^nh_1)^{[i]},h_2^{[d-i]}\bigr),
\]
a shifted exponential--polynomial expansion turns $D_i(n)\asymp\lambda_i^n n^{\mu_i}$ into a lower bound for the same Tur\'an expression.
Together with the upper bound, this forces $\mu_i=0$ at a strict log-concavity break and gives $0\le2\mu_i-\mu_{i-1}-\mu_{i+1}\le4$ in the equality case; the discrete comparison principle then yields the quadratic bound on every maximal affine interval.

Finally, \cref{sec:applications} supplies the projective degree--norm and K\"ahler mass--norm comparisons and specializes the abstract results to the geometric models of \cref{ex:geometric-HL-models}.
Appendix~\ref{sec:abelian-sharpness} uses Cauchy--Binet and confluent Vandermonde determinants to construct powers of elliptic curves simultaneously attaining the mixed, degree-growth, affine-interval, and Hodge-bidegree bounds.

\subsection*{Acknowledgments}
The first author gratefully acknowledges the hospitality of the Fields Institute during Summer 2026, when this work was carried out.
C.~Jiang is a member of the Key Laboratory of Mathematics for Nonlinear Sciences, Fudan University.
The authors also thank Lie Fu for stimulating discussions on mixed-volume growth in 2023.

\subsection*{Declaration on the use of generative AI}
The initial manuscript was assembled from two author-written drafts with the assistance of OpenAI's ChatGPT 5.5. The authors then substantially rewrote and revised the manuscript.
At a late stage, ChatGPT was extensively used in revising the language and exposition throughout the manuscript.
It aslo assisted in cross-checking proofs, particularly the matrix calculations in the Appendix. All arguments and computations were independently derived and verified by the authors, who take full responsibility for the content of the manuscript.

\section{Dynamical Hodge--Lorentz packages and Ptolemy inequalities}\label{sec:Hodge-Lorentz}


\subsection{Infinitesimal symmetries and Jordan--Chevalley decomposition}
\begin{definition}\label{def:stabQ}
Let $V$ be a finite-dimensional real vector space and let $Q\in\Sym^d(V^\vee)$ be a non-zero symmetric $d$-linear form.
\begin{enumerate}
\item An automorphism $T\in \GL(V)$ is called a \textit{$Q$-similitude} if \[Q(Tv_1,\ldots,Tv_d)
=
\chi(T)Q(v_1,\ldots,v_d)\]
for some $\chi(T)\in \bR^*$ and all $v_1,\ldots,v_d\in V$. If $\chi(T)=1$, we call $T$ a \emph{$Q$-isometry}. 
    \item The \textit{infinitesimal stabilizer} of $Q$ is defined by
\[
\stab(Q) \coloneqq \left\{ N\in\gl(V): \sum_{i=1}^dQ(v_1,\ldots,Nv_i,\ldots,v_d)=0 \text{ for all }v_1,\ldots,v_d\in V \right\}.
\]
\end{enumerate}

\end{definition}

\begin{lemma}[Diagonal $N$-invariance]\label{lem:stabQ}
Let $V$ be a finite-dimensional real vector space and let $Q\in\Sym^d(V^\vee)$ be a non-zero symmetric $d$-linear form.
For $N\in\gl(V)$, one has $N\in\stab(Q)$ if and only if $e^{tN}$ is a $Q$-isometry for every $t\in\bR$, that is, 
\begin{equation}\label{eq:etN-preserves-Q}
Q(e^{tN}v_1,\ldots,e^{tN}v_d)=Q(v_1,\ldots,v_d)\, \text{ for every }  t\in\bR  \text{ and } v_1,\ldots,v_d\in V.
\end{equation}
If moreover $N$ is nilpotent, it suffices to verify \eqref{eq:etN-preserves-Q} at $t=1$:
\begin{equation}\label{eq:eN-preserves-Q}
Q(e^Nv_1,\ldots,e^Nv_d)=Q(v_1,\ldots,v_d)\, \text{ for all } v_1,\ldots,v_d\in V.
\end{equation}
\end{lemma}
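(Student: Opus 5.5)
The plan is to differentiate along the one-parameter group and then integrate back, using the symmetry of $Q$ only to make the notation uniform. Fix $v_1,\ldots,v_d\in V$ and set
\[
\phi(t)\coloneqq Q(e^{tN}v_1,\ldots,e^{tN}v_d).
\]
Since $Q$ is multilinear and $t\mapsto e^{tN}v_j$ is smooth with derivative $Ne^{tN}v_j$, the Leibniz rule gives
\[
\phi'(t)=\sum_{i=1}^d Q(e^{tN}v_1,\ldots,Ne^{tN}v_i,\ldots,e^{tN}v_d).
\]
If $N\in\stab(Q)$, apply the defining identity of $\stab(Q)$ to the vectors $w_j=e^{tN}v_j$. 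This gives $\phi'(t)=0$ for every $t$, so $\phi(t)=\phi(0)=Q(v_1,\ldots,v_d)$, which is \eqref{eq:etN-preserves-Q}. Conversely, if \eqref{eq:etN-preserves-Q} holds for all $t$, then $\phi$ is constant, and $\phi'(0)=\sum_i Q(v_1,\ldots,Nv_i,\ldots,v_d)=0$. Since $v_1,\ldots,v_d$ are arbitrary, $N\in\stab(Q)$.

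For the nilpotent refinement, assume $N$ is nilpotent and \eqref{eq:eN-preserves-Q} holds. By the equivalence just proved, it suffices to show \eqref{eq:etN-preserves-Q} for all $t$. I will use a polynomiality argument. Because $N$ is nilpotent, $e^{tN}=\sum_{k<\dim V}t^kN^k/k!$ is a polynomial in $t$, and multilinearity makes $\phi(t)$ a real polynomial in $t$.

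The identity at $t=1$ upgrades to every integer $m\in\bZ$. Indeed, $e^N$ is a $Q$-isometry, hence so are its powers and its inverse, and these equal $e^{mN}$ because $sN$ and $tN$ commute. Thus $\phi(m)=\phi(0)$ for all $m\in\bZ$. A polynomial that takes the same value at infinitely many points is constant, so $\phi(t)=\phi(0)$ for all real $t$, which is \eqref{eq:etN-preserves-Q}.

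The only step needing care is this passage from $t=1$ to all real $t$. It is exactly where nilpotence enters, since without it $\phi$ need not be a polynomial: for instance, a rotation generator $N$ with $e^N=\id$ need not lie in $\stab(Q)$. The polynomiality, combined with the group property of $e^{mN}$, handles it.
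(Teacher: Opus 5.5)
Your proposal is correct and follows essentially the same route as the paper: differentiate $t\mapsto Q(e^{tN}v_1,\ldots,e^{tN}v_d)$ via multilinearity, get the converse from the derivative at $t=0$, and in the nilpotent case use that this function is a polynomial taking the value $\phi(0)$ at infinitely many integers. The only cosmetic difference is that you use all $m\in\bZ$ via inverses, whereas the paper only needs the positive iterates $n\ge1$.
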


\begin{proof}
For fixed $v_1,\ldots,v_d$, define a real-valued function
\[
g(t)\coloneqq Q(e^{tN}v_1,\ldots,e^{tN}v_d), \qquad t\in \bR.
\]
Multilinearity of $Q$ gives
\[
g'(t)=\sum_{i=1}^d
Q(e^{tN}v_1,\ldots,Ne^{tN}v_i,\ldots,e^{tN}v_d).
\]
Thus $N\in\stab(Q)$ implies $g'\equiv0$.
Hence $g\equiv g(0)$ which is exactly \eqref{eq:etN-preserves-Q}.
The converse follows by differentiating the equation \eqref{eq:etN-preserves-Q} at $t=0$, which gives $g'(0)=0$.

If $N$ is nilpotent, then the above function $g$ is a polynomial.
If \eqref{eq:eN-preserves-Q} holds, iteration gives $g(n)=g(0)$ for every $n\ge1$; hence $g-g(0)$ has infinitely many zeros and is identically zero.
The reverse implication is immediate.
\end{proof}

For an arbitrary $Q$-similitude $T\in \GL(V)$, we use the multiplicative Jordan--Chevalley decomposition to extract its unipotent part.

\begin{lemma}
\label{lem:multi-JC}
Let $V$ be a finite-dimensional real vector space, let $0\ne Q\in\Sym^d(V^\vee)$, and let $T\in\GL(V)$ be a $Q$-similitude. 
Write
\[
T=Se^N=e^NS
\]
for the multiplicative Jordan--Chevalley decomposition of $T$. Then $S$ and $e^N$ are  $Q$-similitudes and $N\in\stab(Q)$.
\end{lemma}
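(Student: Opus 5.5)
The strategy is to show that the similitude group $G=\{T\in\GL(V): T \text{ is a } Q\text{-similitude}\}$ is a real linear algebraic group. The Jordan--Chevalley components of $T$ then lie in $G$ automatically. The key step, that $e^N$ is an isometry and not merely a similitude, is handled separately by a growth argument.

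First, $G$ is Zariski closed in $\GL(V)$. Since $Q\neq0$, we can fix $w_1,\dots,w_d$ with $Q(w_1,\dots,w_d)\ne0$. For $T\in G$ this forces $\chi(T)=Q(Tw_1,\dots,Tw_d)/Q(w_1,\dots,w_d)$. So $T\in G$ if and only if, for all $v_1,\dots,v_d$,
\[
Q(Tv_1,\dots,Tv_d)\,Q(w_1,\dots,w_d)=Q(Tw_1,\dots,Tw_d)\,Q(v_1,\dots,v_d).
\]
These are polynomial equations in the entries of $T$. Moreover, $G$ is a subgroup, with $\chi$ a character. By the standard theorem on linear algebraic groups, the semisimple part $S$ and unipotent part $U=e^N$ of any $T\in G(\bR)$ lie in $G(\bR)$; they are real because the decomposition is defined over a perfect field. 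If one wants to avoid quoting this theorem, there is a direct argument: $S$ and $U$ are polynomials in $T$, and the Zariski closure of $\{T^n\}$ in $\GL(V)$ is contained in $G$ and contains $S$ and $U$. So $S$ and $e^N$ are $Q$-similitudes.

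It remains to show $\chi(e^N)=1$, after which \cref{lem:stabQ} (nilpotent case, $t=1$) gives $N\in\stab(Q)$. Put $U=e^N$ and $c=\chi(U)$. Then $\chi(U^n)=c^n$, so $Q(U^nw_1,\dots,U^nw_d)=c^nQ(w_1,\dots,w_d)$. Since $U^n=e^{nN}$ has entries polynomial in $n$, the left-hand side is a polynomial in $n$. The right-hand side is $c^n$ times a nonzero constant, and a polynomial can equal this only if $c=1$. Alternatively, $\chi$ is a character of $G$, hence a morphism to $\mathbf{G}_m$, and it kills unipotent elements.

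The main obstacle is the membership $S,e^N\in G$. I would rely on the Zariski-closure argument, noting that since $T$ preserves the defining equations, so does every element of the closure of $\{T^n:n\in\bZ\}$. A cleaner alternative avoids the closure: write $S$ and $N$ via Jordan components over $\bC$, extend $Q$ complex-multilinearly, and decompose $V_\bC$ into generalized eigenspaces $V_\alpha$ of $T$. On these, the similitude relation forces $Q$ to vanish on $V_{\alpha_1}\times\dots\times V_{\alpha_d}$ unless $\prod\alpha_j=\chi(T)$. This gives $S$-similitude with the same factor, and $e^N=S^{-1}T$ then follows.
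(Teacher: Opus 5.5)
Your proposal is correct, and its main line is the paper's argument: exhibit the $Q$-similitudes as a Zariski-closed subgroup of $\GL(V)$ and apply the Jordan decomposition theorem for linear algebraic groups. The paper describes this group as the stabilizer of $[Q]\in\bP(\Sym^d(V^\vee))$ rather than by your explicit equations, but that is the same thing.

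The differences lie in the remaining steps.
\begin{itemize}
\item For $\chi(e^N)=1$, the paper uses that $\chi$ is a morphism of algebraic groups to $\bG_m$. It therefore sends the unipotent part of $T$ to the unipotent part of $\chi(T)$, which is $1$. Your argument is an equally valid and more elementary replacement: $n\mapsto Q(e^{nN}w_1,\dots,e^{nN}w_d)$ is a polynomial in $n$ equal to $\chi(e^N)^nQ(w_1,\dots,w_d)\neq0$, which forces $\chi(e^N)=1$.
\item Your last paragraph is a genuinely different route, and the most self-contained one. The linear functional on $V_{\alpha_1}\otimes\cdots\otimes V_{\alpha_d}$ induced by $Q$ is a $\chi(T)$-eigenvector of the transpose of $T|_{V_{\alpha_1}}\otimes\cdots\otimes T|_{V_{\alpha_d}}$. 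That operator has $\prod_j\alpha_j$ as its only eigenvalue, so the functional vanishes unless $\prod_j\alpha_j=\chi(T)$. Since $S$ acts by $\alpha$ on $V_\alpha$, this gives $\chi(S)=\chi(T)$, and hence $\chi(e^N)=1$, with no algebraic-group input.
\end{itemize}
What the paper's version buys is brevity and a conceptual reason (characters kill unipotent elements), at the cost of citing the structure theory of algebraic groups.

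One caveat concerns the ``direct'' Zariski-closure argument, which you say you would rely on: it is not a proof as written. The inclusion $\overline{\{T^n\}}\subseteq G$ is clear. However, the claim that $S$ and $e^N$ lie in $\overline{\{T^n\}}$ is exactly the nontrivial content of the Jordan decomposition theorem. It does not follow from $S$ and $e^N$ being polynomials in $T$: for $T=\id$, the polynomial $2T$ is not in $\overline{\{T^n\}}=\{\id\}$. The claim can be proved, for instance from the linear independence of the functions $n\mapsto\beta^n n^k$, but you have not done this. So either cite the theorem, as the paper does, or lead with your eigenspace argument.
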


\begin{proof}
Denote by 
$\operatorname{GSim}(Q)$ the set of all $Q$-similitudes. 
Then it is the stabilizer of $[Q]$ for the natural $\GL(V)$-action on $\bP(\Sym^d(V^\vee))$, and hence is an algebraic subgroup.
Its action on the invariant line $\bR Q$ defines an algebraic character
\[
\chi\colon\operatorname{GSim}(Q)\longrightarrow\bG_m.
\]
Applying the Jordan decomposition theorem for algebraic groups 
(see \cite[Theorem~4.4(1), (4)]{Borel91} or \cite[Theorem~15.3(a), (c)]{GTM21})  
to $T=Se^N\in \operatorname{GSim}(Q)$ gives
\[
S,e^N\in\operatorname{GSim}(Q)
\]
and identifies $\chi(e^N)$ with the unipotent part of $\chi(T)\in \bG_m$, which is just $1$.
Thus $e^N$ is a $Q$-isometry, and \cref{lem:stabQ} gives $N\in\stab(Q)$.
\end{proof}

\subsection{Static and dynamical Hodge--Lorentz packages}


\begin{definition}\label{def:HL-package}
A \emph{Hodge--Lorentz package of degree $d\ge2$} is a triple $(V,\cC,Q)$ consisting of a finite-dimensional real vector space $V$, a nonempty open convex cone $\cC\subset V$, and a symmetric $d$-linear form $Q\in\Sym^d(V^\vee)$ satisfying the following conditions.
\begin{enumerate}[label=\textnormal{(\roman*)}, ref=\textnormal{(\roman*)}]
\item Positivity: For every $h_1,\ldots,h_d\in\cC$,
\[
Q(h_1,\ldots,h_d)>0.
\]
\item Lorentzian: For every $h_1,\ldots,h_{d-2}\in\cC$, the symmetric bilinear form
\[
Q(-,-,h_1,\ldots,h_{d-2})
\colon
V\times V\longrightarrow\bR
\]
has positive index at most one.
\end{enumerate}
\end{definition}

Condition \textnormal{(i)} supplies a positive direction, so each contracted form in \textnormal{(ii)} has positive index exactly one; zero eigenvalues, and hence degeneracy, are allowed.

The associated homogeneous polynomial is
\[
F_Q(v)
\coloneqq
\frac1{d!}Q(v,\ldots,v).
\]
After polarization, the preceding definition is the multilinear formulation of the cone-Lorentzian condition for $F_Q$ in the sense of Br\"anden--Leake \cite{BL26}; on the positive orthant it recovers the corresponding index-one formulation in the Lorentzian-polynomial theory of Br\"anden--Huh \cite{BH20}.
We do not require $F_Q$ to be the volume polynomial of a geometric object, although the standard examples arise from intersection forms, mixed volumes, and mixed discriminants.


\begin{definition}\label{def:dynamical-HL-package}
A \emph{dynamical Hodge--Lorentz package of degree $d\ge2$} is a quadruple $(V,\cC,Q,T)$, where $(V,\cC,Q)$ is a Hodge--Lorentz package of degree $d$ and $T\in\GL(V)$ is  a $Q$-similitude satisfying 
 $T(\cC)\subseteq\cC$. 
Such $T$ is called a \emph{cone-preserving $Q$-similitude}.
If $T=e^N$ is unipotent, we call the package \emph{unipotent}.
\end{definition}

\begin{example}\label{ex:geometric-HL-models}
The two principal geometric models in application are the following. The corresponding geometric applications will be developed in \Cref{sec:applications}. 
\begin{enumerate}[label=\textnormal{(\roman*)}]
\item Let $X$ be a normal projective variety of dimension $d\ge2$ over $\bk$.
Denote by $\Amp(X)\subset\N^1(X)_\bR$ the ample cone of $X$.
Then
\[
V=\N^1(X)_\bR,
\qquad
\cC=\Amp(X),
\qquad
Q(D_1,\ldots,D_d)=D_1\cdots D_d
\]
is a Hodge--Lorentz package by the mixed Hodge index theorem (see, e.g., \cite[Proposition~2.9]{Hu20a}).
For a surjective endomorphism $f$ of $X$, $f$ is finite and by the projection formula, $T=f^*\in \GL(V)$ is a cone-preserving $Q$-similitude with $\chi(T)=\deg(f)$. So $(V,\cC,Q,T)$ is a dynamical Hodge--Lorentz package of degree $d$.

\item Let $Y$ be a compact K\"ahler manifold of dimension $d\ge2$.
Denote by $\cK_Y\subset H^{1,1}(Y,\bR)$  the K\"ahler cone of $Y$.
Then
\[
V=H^{1,1}(Y,\bR),
\qquad
\cC=\cK_Y,
\qquad
Q(\alpha_1,\ldots,\alpha_d)
=
\int_Y\alpha_1\wedge\cdots\wedge\alpha_d
\]
is a Hodge--Lorentz package by the mixed Hodge--Riemann relations \cite{Gromov90,DN06}.
For a surjective holomorphic endomorphism $g$ of $Y$, $g$ is also finite and by the numerical characterization of K\"ahler cone \cite{DP04}, $T=g^*\in \GL(V)$ is a cone-preserving $Q$-similitude. So $(V,\cC,Q,T)$ is a  dynamical Hodge--Lorentz package of degree $d$.
\end{enumerate}
\end{example}

\subsection{Mixed coefficients, mixed orbit functions, and unipotent-shift polynomials}

Recall that $\Comp(d)$ denotes the set of weak compositions of $d$ into $d$ ordered parts, as defined in \cref{sec:1.2}.

\begin{definition}\label{def:mixed-Q-coefficients}
Let $(V,\cC,Q)$ be a Hodge--Lorentz package of degree $d\ge2$,
let $\gamma\in\Comp(d)$, and let $\bv=(v_1,\ldots,v_d)\in V^d$.
Define the \textit{mixed $Q$-coefficient} by
\[
Q_\gamma(\bv)\coloneqq Q(v_1^{[\gamma_1]},\ldots,v_d^{[\gamma_d]}),
\]
where $v_j^{[\gamma_j]}$ means that $v_j$ appears $\gamma_j$ times with $v_j^{[0]}$ omitted.
Equivalently, polarization gives
\[
F_Q\Biggl(\sum_{i=1}^d x_iv_i\Biggr)
=
\sum_{\gamma\in\Comp(d)}
Q_\gamma(\bv)\frac{x^{\gamma}}{\gamma!},
\]
where $\gamma!\coloneqq \gamma_1!\cdots\gamma_d!$ and $x^{\gamma}\coloneqq x_1^{\gamma_1}\cdots x_d^{\gamma_d}$.
Thus $Q_\gamma(\bv)$ is the coefficient of $x^\gamma/\gamma!$ in the expansion of $F_Q$.
\end{definition}


From here until the end of this subsection, let $(V,\cC,Q,T)$ be a dynamical Hodge--Lorentz package of degree $d\ge2$, let $\gamma\in\Comp(d)$, and let $\bh=(h_1,\ldots,h_d)\in\cC^d$.

\begin{definition}\label{def:mixed-orbit-functions}
For $\bn=(n_1,\ldots,n_d)\in\bZ^d$, set
\[
T^{\bn}\bh
\coloneqq
(T^{n_1}h_1,\ldots,T^{n_d}h_d)
\]
and define the \emph{mixed orbit function}
\[
\Phi_{T,\gamma,\bh}(\bn)
\coloneqq
Q_\gamma(T^{\bn}\bh)
=
Q_\gamma(T^{n_1}h_1,\ldots,T^{n_d}h_d).
\]

If the package is unipotent, i.e., $T=e^N$, then the mixed orbit function has the \textit{canonical polynomial extension}
\[
\Phi_{T,\gamma,\bh}(\bt)
\coloneqq
Q_\gamma\bigl(
e^{t_1N}h_1,\ldots,e^{t_dN}h_d
\bigr),
\qquad
\bt\in\bR^d.
\]
\end{definition}

For a general dynamical Hodge--Lorentz package that may not be unipotent, we introduce auxiliary unipotent-shift polynomials used to obtain the orbit-uniform Tur\'an upper estimate \cref{thm:turan-upper}.
Write
$T=Se^N=e^NS$
for the multiplicative Jordan--Chevalley decomposition; see \cref{lem:multi-JC}.

\begin{definition}\label{def:unipotent-shift-poly}
For $\bn=(n_1,\ldots,n_d)\in\bZ^d$ and $\bt=(t_1,\ldots,t_d)\in\bR^d$, define the \emph{unipotent-shift polynomial} of $\Phi_{T,\gamma,\bh}$ at $\bn$ by
\begin{align*}
\phi_{\gamma,\bn}(\bt)
&\coloneqq
Q_\gamma\bigl(
e^{t_1N}S^{n_1}h_1,\ldots,
e^{t_dN}S^{n_d}h_d
\bigr)=Q_\gamma\bigl(
e^{(t_1-n_1)N}T^{n_1}h_1,\ldots,
e^{(t_d-n_d)N}T^{n_d}h_d
\bigr).
\end{align*}
Thus $\phi_{\gamma,\bn}$ is a polynomial in $\bt$, even though $T$ may not be unipotent, and
$\phi_{\gamma,\bn}(\bn)=\Phi_{T,\gamma,\bh}(\bn)$.
If $T=e^N$ is unipotent, then for every $\bt\in\bR^d$, one has
$\phi_{\gamma,\bn}(\bt)=\Phi_{T,\gamma,\bh}(\bt)$,
independently of $\bn$.

When $\gamma=(i,d-i,0,\ldots,0)$ has only two nontrivial blocks and $\bn=(n,0,\ldots,0)$, by \cref{lem:stabQ}, the corresponding unipotent-shift polynomial $\phi_{\gamma,\bn}$ at $\bn$ is
\[
Q\bigl((e^{t_1N}S^nh_1)^{[i]},(e^{t_2N}h_2)^{[d-i]}\bigr)=Q\bigl((e^{(t_1-t_2)N}S^nh_1)^{[i]},h_2^{[d-i]}\bigr).
\]
Define the one-variable unipotent-shift polynomial at $n$ by
\[
\phi_{i,n}(t)
\coloneqq 
Q\bigl((e^{tN}S^nh_1)^{[i]},h_2^{[d-i]}\bigr)
=Q\bigl((e^{(t-n)N}T^nh_1)^{[i]},h_2^{[d-i]}\bigr).
\]
If $T=e^N$ is unipotent, this polynomial is independent of $n$. 
\end{definition}

\subsection{Lorentzian reverse Cauchy--Schwarz and Ptolemy inequality}

The basic geometric input is a four-point Ptolemy inequality for Lorentzian bilinear forms.
The exact zero-diagonal matrix characterization is \cite[Proposition~3.1]{HHMWW}.
See also Huh's lectures \cite{Huh-SRI} at the 2025 Summer Research Institute.
For the reader's convenience, we include a short linear-algebraic proof.

\begin{theorem}\label{thm:lorentzian-ptolemy} 
Let $V$ be a finite-dimensional real vector space and let $B\colon V\times V\to \bR$ be a symmetric bilinear form with positive index at most $1$.
\begin{enumerate}[label=\textnormal{(\arabic*)}]
\item {\it Reverse Cauchy--Schwarz inequality}: For every $v_1, v_2\in V$ with $B(v_1, v_1)\geq 0$, we have
\begin{equation}\label{eq:RCS-KT}
B(v_1, v_2)^2\geq B(v_1, v_1)B(v_2, v_2).
\end{equation}

\item {\it Four-point Ptolemy inequality}: If $v_1, \dots, v_4\in V$ satisfy $B(v_i, v_j)\geq 0$ for all $1\leq i, j\leq 4$, then
\begin{equation}\label{eq:Ptolemy}
\left|\sqrt{B(v_1,v_2)B(v_3,v_4)}-\sqrt{B(v_1,v_4)B(v_2,v_3)}\right|
\le
\sqrt{B(v_1,v_3)B(v_2,v_4)}.
\end{equation}
\end{enumerate}
\end{theorem}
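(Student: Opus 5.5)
Part (1) is the standard reverse Cauchy--Schwarz inequality.
\begin{itemize}
\item If $B(v_1,v_2)^2<B(v_1,v_1)B(v_2,v_2)$, then $B(v_1,v_1)>0$ and $B(v_2,v_2)>0$, since the right side must be positive and $B(v_1,v_1)\ge0$. In particular $v_1,v_2$ are linearly independent.
\item The Gram matrix of $B$ on $\mathrm{span}(v_1,v_2)$ then has positive determinant and positive diagonal, so it is positive definite.
\item This gives positive index $2$, contradicting the hypothesis.
\end{itemize}

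For part (2), write $b_{ij}=B(v_i,v_j)\ge0$. Set
\[
x=\sqrt{b_{12}b_{34}},\qquad y=\sqrt{b_{14}b_{23}},\qquad z=\sqrt{b_{13}b_{24}}.
\]
The inequality $|x-y|\le z$, and its cyclic companions, together say exactly that $x,y,z$ satisfy the triangle inequalities. Equivalently,
\[
(x+y+z)(-x+y+z)(x-y+z)(x+y-z)\ge0
\]
with at most one factor negative. By the standard Heron identity this product equals
\[
2x^2y^2+2y^2z^2+2z^2x^2-x^4-y^4-z^4,
\]
a polynomial in the products $b_{12}b_{34}$, $b_{14}b_{23}$, $b_{13}b_{24}$. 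My plan is therefore to relate this Cayley--Menger-type expression to a Gram determinant of $B$.

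\textbf{Reduction.} The key step is a reduction. Restricting $B$ to $W=\mathrm{span}(v_1,\dots,v_4)$ keeps positive index at most $1$, so the $4\times4$ Gram matrix $G=(b_{ij})$ has at most one positive eigenvalue. I would then rescale $v_i\mapsto c_iv_i$ with $c_i>0$. This multiplies $x,y,z$ all by $c_1c_2c_3c_4$, so the inequality is invariant. I would also handle degenerate cases (some $b_{ij}=0$) by continuity or directly.

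The cleanest route I would try mirrors the zero-diagonal characterization of \cite{HHMWW}. Replace $G$ by the matrix with diagonal set to zero. Since the diagonal entries are $\ge0$ and we subtract a positive semidefinite diagonal matrix, the positive index does not increase. So we may assume $b_{ii}=0$ for all $i$, with off-diagonal entries $\ge0$ and at most one positive eigenvalue.

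\textbf{Zero-diagonal case.} For a symmetric matrix with zero diagonal and nonnegative entries, the trace is $0$, so there is at least one positive eigenvalue unless the matrix vanishes; hence the signature is $(1,\ast)$. I would then:
\begin{itemize}
\item use the rescaling to normalize, say $b_{14}=b_{24}=b_{34}=1$ when these are nonzero;
\item take a Schur complement with respect to the $2\times2$ block on $\{3,4\}$, or examine the $3\times3$ principal minors.
\end{itemize}
Having at most one positive eigenvalue forces every $2\times2$ principal submatrix (automatic here) and every $3\times3$ principal submatrix to have at most one positive eigenvalue. It also constrains $\det G$: with one positive eigenvalue and size $4$, $\det G\le0$ unless another eigenvalue is zero. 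A direct computation should give
\[
\det G=x^4+y^4+z^4-2x^2y^2-2y^2z^2-2z^2x^2
\]
for the zero-diagonal $4\times4$ matrix, which is the negative of the Heron product. So $\det G\le0$ yields the Heron product $\ge0$.

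\textbf{Main obstacle.} The hardest step is excluding the case where two triangle inequalities fail, i.e.\ where two factors of the Heron product are negative. Since $x,y,z\ge0$, at most one of the factors $(-x+y+z)$, $(x-y+z)$, $(x+y-z)$ can be negative: if two were, adding them would give a negative $2z$ (or $2x$, $2y$). The factor $x+y+z$ is $\ge0$. So nonnegativity of the product gives all three triangle inequalities, with degenerate zeros handled by a limiting argument.

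Thus the real work is the sign of $\det G$, which follows from counting eigenvalues: one positive eigenvalue and three eigenvalues that are $\le0$ give $\det G\le0$.
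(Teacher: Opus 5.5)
Your proposal is correct and follows essentially the paper's proof. For (1) you use the sign of the $2\times2$ Gram determinant. For (2) you subtract the nonnegative diagonal, use zero trace together with positive index at most one to get $\det M\le 0$, and read off $|x-y|\le z$ from the factorization $\det M=(z^2-(x+y)^2)(z^2-(x-y)^2)$, which is the negative of the Heron product.

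The rescaling, Schur-complement, $3\times3$-minor and limiting-argument detours you mention are unnecessary. This factorization already gives $(x-y)^2\le z^2\le (x+y)^2$ directly, with no degenerate cases to handle separately.
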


\begin{proof}
(1) We may assume that $B(v_1, v_1)>0$, since otherwise the inequality is clear.
The Gram matrix
\[
\begin{pmatrix}
B(v_1,v_1) & B(v_1,v_2)\\
B(v_1,v_2) & B(v_2,v_2)
\end{pmatrix}
\]
of $v_1,v_2$ has exactly one positive eigenvalue by \cite[Corollary~4.5.11]{HJ13} and hence nonpositive determinant.
This proves the reverse Cauchy--Schwarz inequality \eqref{eq:RCS-KT}.

(2) Write $b_{ij}\coloneqq B(v_i,v_j)\ge0$.
The Gram matrix $(b_{ij})_{1\le i, j\le 4}$ has at most one positive eigenvalue by \cite[Corollary~4.5.11]{HJ13}.
Subtracting a positive-semidefinite matrix cannot increase the positive index.
Therefore, after subtracting the nonnegative main diagonal, the matrix
\[
M=\begin{pmatrix}
0 & b_{12} & b_{13} & b_{14}\\
b_{12} & 0 & b_{23} & b_{24}\\
b_{13} & b_{23} & 0 & b_{34}\\
b_{14} & b_{24} & b_{34} & 0
\end{pmatrix}
\]
has at most one positive eigenvalue.
If $M$ is singular, then $\det(M)=0$.
If $M$ is nonsingular, then its zero trace and positive index at most one force its signature to be $(1,3)$.
Thus $\det(M)\leq0$ in all cases.
Set
\[
a=\sqrt{b_{12}b_{34}},
\qquad
b=\sqrt{b_{13}b_{24}},
\qquad
c=\sqrt{b_{14}b_{23}}.
\]
A direct computation gives
\[
\det M = \bigl(b^2-(a+c)^2\bigr)\bigl(b^2-(a-c)^2\bigr).
\]
Since $0\le (a-c)^2\le (a+c)^2$ and $b\ge0$, the inequality $\det M\le0$ is equivalent to
\[
(a-c)^2\le b^2\le(a+c)^2,
\]
or equivalently, $|a-c|\le b\le a+c$.
The first inequality is exactly \eqref{eq:Ptolemy}.
\end{proof}

\begin{remark}
The reverse Cauchy--Schwarz inequality \eqref{eq:RCS-KT} is also called the Khovanskii--Teissier inequality.
There is also a reverse Khovanskii--Teissier inequality on three variables first observed by Xiao \cite[Remark~3.1]{Xiao15}, though we do not need it in this paper:
if $v_1, v_2, v_3\in V$ satisfy $B(v_i, v_j)\ge 0$ for all $1\le i, j\le 3$, then
\begin{equation*}
B(v_1,v_1)B(v_2,v_3) \le 2B(v_1,v_2)B(v_1,v_3).
\end{equation*}
See \cite{LX16,LX17,Dang20,JL23} for its general mixed versions.
\end{remark}

\begin{corollary}[Log-concavity of mixed $Q$-coefficients]\label{cor:mixed-log-concavity}
Let $(V,\cC,Q)$ be a Hodge--Lorentz package of degree $d\ge2$ as in \cref{def:HL-package}.
For $h_1,h_2,h_3,\ldots,h_d\in\cC$ one has
\[
Q(h_1,h_2,h_3,\ldots,h_d)^2\ge Q(h_1,h_1,h_3,\ldots,h_d)Q(h_2,h_2,h_3,\ldots,h_d).
\]
In particular, for the two-block mixed $Q$-coefficients, we have for every $1\le i\le d-1$,
\[
Q\bigl(h_1^{[i]},h_2^{[d-i]}\bigr)^2 \ge Q\bigl(h_1^{[i-1]},h_2^{[d-i+1]}\bigr)Q\bigl(h_1^{[i+1]},h_2^{[d-i-1]}\bigr).
\]
\end{corollary}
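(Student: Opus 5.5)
This is a direct application of the reverse Cauchy--Schwarz inequality in \cref{thm:lorentzian-ptolemy}(1). We freeze the last $d-2$ arguments at $h_3,\ldots,h_d\in\cC$ and consider the symmetric bilinear form
\[
B(v,w)\coloneqq Q(v,w,h_3,\ldots,h_d),\qquad v,w\in V.
\]
By the Lorentzian condition (ii) of \cref{def:HL-package}, $B$ has positive index at most one. By the positivity condition (i), $B(h_1,h_1)=Q(h_1,h_1,h_3,\ldots,h_d)>0$ because $h_1\in\cC$.

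We then apply \eqref{eq:RCS-KT} with $v_1=h_1$ and $v_2=h_2$. This gives $B(h_1,h_2)^2\ge B(h_1,h_1)B(h_2,h_2)$. By symmetry of $Q$, this is exactly the first displayed inequality.

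For the two-block statement, fix $1\le i\le d-1$ and choose the frozen arguments to be $h_1$ repeated $i-1$ times and $h_2$ repeated $d-i-1$ times. This is a total of $d-2$ elements of $\cC$, and both exponents are nonnegative since $1\le i\le d-1$. Then
\begin{align*}
B(h_1,h_2)&=Q\bigl(h_1^{[i]},h_2^{[d-i]}\bigr),\\
B(h_1,h_1)&=Q\bigl(h_1^{[i+1]},h_2^{[d-i-1]}\bigr),\\
B(h_2,h_2)&=Q\bigl(h_1^{[i-1]},h_2^{[d-i+1]}\bigr).
\end{align*}
The first part, applied to this choice, yields the claimed log-concavity. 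Its hypothesis $B(h_1,h_1)>0$ again holds by positivity.

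There is no real obstacle. The only points to check are that every frozen argument lies in $\cC$, so that condition (ii) applies, and that $B(h_1,h_1)\ge 0$, which positivity guarantees.
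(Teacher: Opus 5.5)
Your proposal is correct and is exactly the paper's argument: the paper proves this corollary in one line by invoking the reverse Cauchy--Schwarz inequality \eqref{eq:RCS-KT} for the contracted form $Q(-,-,h_3,\ldots,h_d)$, which has positive index at most one by the Lorentzian condition, with $B(h_1,h_1)>0$ supplied by positivity. Your frozen-argument choice $h_1^{[i-1]},h_2^{[d-i-1]}$ for the two-block case matches the intended specialization.
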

\begin{proof}
It follows from \cref{thm:lorentzian-ptolemy}(1).
\end{proof}

\begin{corollary}[Ptolemy inequality for mixed $Q$-coefficients]\label{cor:mixed-ptolemy}
Let $(V,\cC,Q)$ be a Hodge--Lorentz package of degree $d\ge2$ as in \cref{def:HL-package}.
Let $h_1,\ldots,h_4,\omega_3,\ldots,\omega_{d}\in\cC$, and define the symmetric bilinear form
\[
B\colon V\times V\longrightarrow\bR,
\qquad
B(u,v)\coloneqq Q(u,v,\omega_3,\ldots,\omega_{d}).
\]
Then
\[
\Bigl|\sqrt{B(h_1,h_2)B(h_3,h_4)}-\sqrt{B(h_1,h_4)B(h_2,h_3)}\Bigr|
\le
\sqrt{B(h_1,h_3)B(h_2,h_4)}.
\]
\end{corollary}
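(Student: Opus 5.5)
The plan is to apply \cref{thm:lorentzian-ptolemy}(2) to the bilinear form $B$ and the four vectors $v_j=h_j$, $1\le j\le 4$. Two hypotheses have to be checked: that $B$ has positive index at most one, and that all the Gram entries $B(h_i,h_j)$ are nonnegative. Once both hold, inequality \eqref{eq:Ptolemy} is exactly the displayed conclusion.

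First, $B$ is clearly symmetric and bilinear, since $Q$ is symmetric and multilinear. By the Lorentzian condition (ii) of \cref{def:HL-package}, applied with the $d-2$ frozen arguments $\omega_3,\ldots,\omega_d\in\cC$, the form $B=Q(-,-,\omega_3,\ldots,\omega_d)$ has positive index at most one. When $d=2$ no arguments are frozen, $B=Q$, and condition (ii) applies with the empty tuple.

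Second, for any $1\le i,j\le 4$ (including $i=j$), the vectors $h_i,h_j,\omega_3,\ldots,\omega_d$ all lie in $\cC$. The positivity condition (i) then gives $B(h_i,h_j)=Q(h_i,h_j,\omega_3,\ldots,\omega_d)>0$, so in particular every Gram entry is nonnegative.

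With both hypotheses verified, \cref{thm:lorentzian-ptolemy}(2) yields the stated inequality directly. The argument has no real obstacle; the only point requiring care is the degenerate case $d=2$ just noted, where condition (ii) must be read with no frozen arguments.
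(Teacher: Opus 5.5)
Your proposal is correct and is the same argument as the paper's, which proves this corollary in one line by invoking \cref{thm:lorentzian-ptolemy}(2) for $B$ and $h_1,\dots,h_4$. Your explicit checks — positive index at most one from condition (ii), including the $d=2$ reading, and nonnegativity of all Gram entries $B(h_i,h_j)$ from condition (i) — are exactly the verifications that one-line proof leaves implicit.
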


\begin{proof}
It follows from \cref{thm:lorentzian-ptolemy}(2).
\end{proof}

\section{Orbit-uniform Tur\'an upper estimates}\label{sec:uniform-Turan}

Throughout this section, let $(V,\cC,Q,T)$ be a dynamical Hodge--Lorentz package of degree $d\ge2$ as in \cref{def:dynamical-HL-package}.
Write $T=Se^N=e^NS$ for the multiplicative Jordan--Chevalley decomposition of $T$; see \cref{lem:multi-JC}.


\subsection{Uniform infinitesimal Ptolemy inequality}

We first prove a uniform infinitesimal form of the four-point Ptolemy inequality.

\begin{lemma}\label{lem:uniform-infinitesimal-ptolemy}
Fix $\omega_1,\omega_2\in\cC$.
Then there exists a constant $C>0$, depending only on the package and on $\omega_1,\omega_2$, such that the following holds.

For every $m_1,m_2\in\bZ_{\ge0}$ and every $\omega_3,\ldots,\omega_d\in\cC$, the symmetric bilinear form
\[
B\colon V\times V\longrightarrow\bR,
\qquad
B(u,v)\coloneqq Q(u,v,\omega_3,\ldots,\omega_d)
\]
satisfies
\begin{align}\label{eq:uniform-infinitesimal-ptolemy}
&\Bigl|
 B(T^{m_1}\omega_1,T^{m_2}\omega_2)
 B(NT^{m_1}\omega_1,NT^{m_2}\omega_2)
 -B(NT^{m_1}\omega_1,T^{m_2}\omega_2)
  B(T^{m_1}\omega_1,NT^{m_2}\omega_2)
\Bigr|\notag\\
&\qquad\le
C B(T^{m_1}\omega_1,T^{m_2}\omega_2)
\sqrt{
 B(T^{m_1}\omega_1,T^{m_1}\omega_1)
 B(T^{m_2}\omega_2,T^{m_2}\omega_2)
}.
\end{align}
\end{lemma}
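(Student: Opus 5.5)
The plan is to perturb $\omega_1,\omega_2$ inside the open cone $\cC$ in the $\pm N$-directions \emph{before} iterating, and then apply \cref{cor:mixed-ptolemy} to the four resulting vectors. The key structural input is that $N$ commutes with $T$, since $T=Se^N=e^NS$.

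\textbf{Choice of $\varepsilon$ and positivity.} Since $\cC$ is open, fix $\varepsilon>0$ depending only on $\omega_1,\omega_2$ and $N$ with $\omega_j\pm\varepsilon N\omega_j\in\cC$ for $j=1,2$. Put $u=T^{m_1}\omega_1$, $u'=NT^{m_1}\omega_1$, $w=T^{m_2}\omega_2$, $w'=NT^{m_2}\omega_2$. Because $T$ commutes with $N$ and $T(\cC)\subseteq\cC$, we get
\[
v_1=u+\varepsilon u',\quad v_3=u-\varepsilon u',\quad v_2=w+\varepsilon w',\quad v_4=w-\varepsilon w'
\]
all lying in $\cC$, for every $m_1,m_2\ge0$. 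Hence, by positivity (i) with $\omega_3,\ldots,\omega_d\in\cC$, all values $b_{ij}=B(v_i,v_j)$ are positive. Therefore \cref{cor:mixed-ptolemy}, or directly \cref{thm:lorentzian-ptolemy}(2), applies.

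\textbf{Expansion.} Write $P=B(u,w)$, $\alpha=B(u',w)$, $\beta=B(u,w')$ and $R=B(u',w')$; the quantity inside the absolute value in \eqref{eq:uniform-infinitesimal-ptolemy} is $PR-\alpha\beta$. A direct expansion gives
\[
b_{12}b_{34}=(P+\varepsilon^2R)^2-\varepsilon^2(\alpha+\beta)^2,\qquad b_{14}b_{23}=(P-\varepsilon^2R)^2-\varepsilon^2(\alpha-\beta)^2,
\]
so that $b_{12}b_{34}-b_{14}b_{23}=4\varepsilon^2(PR-\alpha\beta)$. Factoring the difference of products as a difference of square roots times their sum, and applying Ptolemy to the first factor, yields
\[
4\varepsilon^2|PR-\alpha\beta|\le\sqrt{b_{13}b_{24}}\,\bigl(\sqrt{b_{12}b_{34}}+\sqrt{b_{14}b_{23}}\bigr).
\]

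\textbf{Bounding the terms by $P$, $B(u,u)$, $B(w,w)$.} All pairwise values are nonnegative and $v_1+v_3=2u$, $v_2+v_4=2w$. This gives
\[
b_{12}\le b_{12}+b_{14}=2B(v_1,w)\le 2\bigl(B(v_1,w)+B(v_3,w)\bigr)=4P,
\]
and likewise each of $b_{34},b_{14},b_{23}$ is at most $4P$. Also
\[
4B(u,u)=b_{11}+2b_{13}+b_{33}\ge 2b_{13},
\]
so $b_{13}\le 2B(u,u)$, and similarly $b_{24}\le 2B(w,w)$. Substituting,
\[
4\varepsilon^2|PR-\alpha\beta|\le 2\sqrt{B(u,u)B(w,w)}\cdot 8P,
\]
i.e. \eqref{eq:uniform-infinitesimal-ptolemy} holds with $C=4/\varepsilon^2$. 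This constant is independent of $m_1,m_2$ and $\omega_3,\ldots,\omega_d$.

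The only delicate step is making the constant orbit-uniform. This is exactly why the perturbation must be done at time zero and then transported by $T^{m}$ using $[T,N]=0$ and cone invariance; perturbing after iterating would make $\varepsilon$ depend on $m$. The positivity bounds in the last step are what convert Ptolemy's inequality into a bound homogeneous in $P$, with no loss in $m$.
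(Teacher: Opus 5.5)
Your proof is correct and follows essentially the same route as the paper. Both arguments perturb $\omega_1,\omega_2$ in the $N$-direction at time zero, transport by $T^{m_j}$ using $TN=NT$ and $T(\cC)\subseteq\cC$, apply the four-point Ptolemy inequality, factor the difference of products, and bound every factor by positivity. The only variation is your symmetric quadruple $u\pm\varepsilon Nu$, $w\pm\varepsilon Nw$ in place of the paper's $x_i$, $x_i+\varepsilon Nx_i$. Your identities $v_1+v_3=2u$, $v_2+v_4=2w$ replace the paper's auxiliary cone condition $\omega_i-\varepsilon(\omega_i+\varepsilon N\omega_i)\in\cC$, and they give $C=4\varepsilon^{-2}$ instead of $2\varepsilon^{-4}$.
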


\begin{proof}
Choose $\epsilon>0$ sufficiently small such that, for $i=1,2$,
\[
\omega_{i,\epsilon}\coloneqq \omega_i+\epsilon N\omega_i \in \cC \qquad \text{and} \qquad \omega_i-\epsilon \omega_{i,\epsilon}\in\cC.
\]
Such a choice is possible because $\cC$ is open and both displayed vectors converge to $\omega_i$ as $\epsilon\to0^+$. We will show that $C=2\epsilon^{-4}$ satisfies the requirement, which depends only on the package and on $\omega_1,\omega_2$. 

Fix $m_1,m_2\in\bZ_{\ge0}$ and $\omega_3,\ldots,\omega_d\in\cC$.
Set
\[
x_i\coloneqq T^{m_i}\omega_i,
\qquad
x_{i,\epsilon}\coloneqq T^{m_i}\omega_{i,\epsilon}
\qquad(i=1,2).
\]
Since $T$ preserves $\cC$, for $i=1,2$ we have
\[
x_i,\qquad x_{i,\epsilon},\qquad
x_i-\epsilon x_{i,\epsilon}
=T^{m_i}(\omega_i-\epsilon\omega_{i,\epsilon})
\in\cC.
\]
Apply the four-point Ptolemy inequality, \cref{cor:mixed-ptolemy}, to
\[
x_1,\quad x_2,\quad x_{1,\epsilon},\quad x_{2,\epsilon}.
\]
Multiplying the resulting inequality by the sum of its two left-hand square roots yields
\begin{align*}
D_\epsilon
&\le
\sqrt{
 B(x_1,x_{1,\epsilon})B(x_2,x_{2,\epsilon})}\
\Bigl(
 \sqrt{
  B(x_1,x_2)B(x_{1,\epsilon},x_{2,\epsilon})}
 +
 \sqrt{
  B(x_1,x_{2,\epsilon})B(x_2,x_{1,\epsilon})}\ 
\Bigr),
\end{align*}
where
\begin{align*}
D_\epsilon
&\coloneqq
 \bigl|B(x_1,x_2)B(x_{1,\epsilon},x_{2,\epsilon})
 -B(x_1,x_{2,\epsilon})B(x_{1,\epsilon},x_2)\bigr|.
\end{align*}
Positivity of $Q$ on $\cC$ and $x_j-\epsilon x_{j,\epsilon}\in\cC$ give, for $i,j\in\{1,2\}$,
\begin{align*}
B(x_i,x_{j,\epsilon})
& \le 
B(x_i,\epsilon^{-1}x_j)
=\epsilon^{-1}B(x_i,x_j);\\
B(x_{i,\epsilon},x_{j,\epsilon})
& \le
B(\epsilon^{-1}x_i,\epsilon^{-1}x_j)
=\epsilon^{-2}B(x_i,x_j).
\end{align*}
Substituting these bounds into the preceding Ptolemy estimate yields
\begin{align}\label{eq:De<=BBB}
D_\epsilon
\le
2\epsilon^{-2}
B(x_1,x_2)
\sqrt{
 B(x_1,x_1)B(x_2,x_2)
}.
\end{align}
Finally, since $TN=NT$, one has
\begin{equation}\label{eq:perturbed-orbit-class}
x_{i,\epsilon}=x_i+\epsilon Nx_i.
\end{equation}
By bilinearity, expanding $D_\epsilon$ using \eqref{eq:perturbed-orbit-class} and canceling the cross terms gives that $\epsilon^{-2}D_\epsilon$ is exactly the left-hand side of \eqref{eq:uniform-infinitesimal-ptolemy}.
Dividing \eqref{eq:De<=BBB} by $\epsilon^2$ proves \eqref{eq:uniform-infinitesimal-ptolemy} with $C=2\epsilon^{-4}$.
\end{proof}

\subsection{Orbit-uniform Tur\'an upper estimate and Tur\'an degree detection}

We now translate the pairwise estimate \cref{lem:uniform-infinitesimal-ptolemy} into the multi-index notation for mixed $Q$-coefficients.

For $\gamma\in\Comp(d)$, 
write $\supp(\gamma)=\{j:\gamma_j>0\}$. 
Let $e_r$ denote the $r$-th standard basis vector of $\bR^d$.
For distinct $r,s$ with $\gamma_s>0$, write
\[
\gamma^{r\leftarrow s}
\coloneqq
\gamma+e_r-e_s
\]
for the elementary transfer of one unit from the $s$-th part to the $r$-th part.

For a polynomial $P=P(t_1,\ldots,t_d)\in\bR[\bt]$, define
\[
\cT_{rs}(P)\coloneqq (\partial_rP)(\partial_sP) - P\,\partial_r\partial_sP.
\]
For $r=s$, this is the usual differential Tur\'an expression $(\partial_rP)^2-P\,\partial_r^2P$.
On the real region $P\neq0$, $\cT_{rs}(P)/P^2$ is the $(r,s)$-entry of the Hessian of $-\log |P|$.

\begin{theorem}\label{thm:turan-upper}
Let $(V,\cC,Q,T)$ be a dynamical Hodge--Lorentz package of degree $d\ge2$, and write $T=Se^N=e^NS$ for the multiplicative Jordan--Chevalley decomposition of $T$.
Fix $h_1,\ldots,h_d\in\cC$, $\gamma\in\Comp(d)$, and distinct $r,s\in \supp(\gamma)$. 
Then there is a constant $C_{\gamma,r,s}>0$, depending only on the package, $\gamma$, $h_r$, and $h_s$, such that, for every $\bn\in\bZ_{\ge0}^d$,
\begin{equation}\label{eq:turan-upper}
\bigl|\cT_{rs}(\phi_{\gamma,\bn})(\bn)\bigr|
\le
C_{\gamma,r,s}\,\phi_{\gamma,\bn}(\bn)
\sqrt{
\phi_{\gamma^{r\leftarrow s},\bn}(\bn)
\phi_{\gamma^{s\leftarrow r},\bn}(\bn)
}.
\end{equation}
Here the $\phi_{*,\bn}$ are the unipotent-shift polynomials at $\bn$, as in \cref{def:unipotent-shift-poly}.
\end{theorem}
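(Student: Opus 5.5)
The plan is to reduce \eqref{eq:turan-upper} to \cref{lem:uniform-infinitesimal-ptolemy}, applied with $\omega_1=h_r$, $\omega_2=h_s$, $m_1=n_r$, $m_2=n_s$. The frozen arguments $\omega_3,\dots,\omega_d$ will be the remaining $d-2$ entries of $\phi_{\gamma,\bn}$ evaluated at $\bn$. Concretely, at $\bt=\bn$ the vectors entering $\phi_{\gamma,\bn}$ are $x_j\coloneqq e^{n_jN}S^{n_j}h_j=T^{n_j}h_j\in\cC$. I remove one copy of $x_r$ and one copy of $x_s$ from the list $(x_1^{[\gamma_1]},\dots,x_d^{[\gamma_d]})$, which is possible since $r,s\in\supp(\gamma)$. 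The remaining $d-2$ vectors all lie in $\cC$, and I use them as $\omega_3,\dots,\omega_d$, so that $B(u,v)=Q(u,v,\omega_3,\ldots,\omega_d)$.

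Next I compute the derivatives. Since $\partial_r$ acts only on the $\gamma_r$ copies of $e^{t_rN}S^{n_r}h_r$, and each derivative produces $N e^{t_rN}S^{n_r}h_r$, symmetry of $Q$ gives
\[
\partial_r\phi_{\gamma,\bn}(\bn)=\gamma_r B(Nx_r,x_s)+\gamma_r(\gamma_r-1)\,(\text{terms with two }x_r\text{'s hit}),
\]
so a little care is needed. Writing $P=\phi_{\gamma,\bn}$ and using the polarization, I will express $P$, $\partial_rP$, $\partial_sP$ and $\partial_r\partial_sP$ at $\bn$ in terms of $B$-values. These come out as $P=B(x_r,x_s)$, $\partial_rP=\gamma_rB(Nx_r,x_s)+R_r$, and similarly for $s$. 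Here $R_r$ collects the $N$-hits on the other $x_r$ copies, which now sit inside the $\omega$'s.

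This is the main obstacle: the extra terms where $N$ lands on frozen copies of $x_r$ or $x_s$ break the clean $2\times2$ Tur\'an form. I would handle it by symmetrizing. For fixed $\bt$, consider the bilinear form $B_\bt(u,v)=Q(u,v,\text{frozen copies moved by }e^{(t-n)N})$. A cleaner route is to view $P$ along the one-parameter substitution in the $r,s$ variables and to use $\cT_{rs}(P)=-P^2\partial_r\partial_s\log P$. Since the $\gamma_r$ copies of $x_r$ all move together, the log-derivative is a sum over copies, and one can average the pairwise determinant over all choices of one $x_r$ copy and one $x_s$ copy. After expanding, $\cT_{rs}(P)(\bn)$ should equal $\gamma_r\gamma_s$ times the $2\times2$ expression in \eqref{eq:uniform-infinitesimal-ptolemy}, with the frozen copies unperturbed, plus cross terms. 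If the cross terms do not cancel, I would instead apply \cref{lem:uniform-infinitesimal-ptolemy} to each pair of copies after freezing at $\bn$, with $\omega$'s including the $N$-perturbed frozen copies $x_j+\epsilon Nx_j\in\cC$. This is the same $\epsilon$-trick as in the proof of the lemma, applied to all copies simultaneously, i.e.\ Ptolemy for $x_r,x_s,x_{r,\epsilon},x_{s,\epsilon}$ with every other copy also perturbed. In that case I would expand in $\epsilon$ and extract the coefficient of the appropriate order. This keeps the constant uniform, since the perturbation is chosen at time $0$ and only depends on $h_r,h_s$.

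Finally, I identify the right-hand side. $B(x_r,x_r)=Q(x_r,x_r,\omega_3,\ldots)=\phi_{\gamma^{r\leftarrow s},\bn}(\bn)$, and similarly $B(x_s,x_s)=\phi_{\gamma^{s\leftarrow r},\bn}(\bn)$, while $B(x_r,x_s)=\phi_{\gamma,\bn}(\bn)$. Hence the bound from the lemma becomes \eqref{eq:turan-upper} with $C_{\gamma,r,s}=\gamma_r\gamma_s\cdot 2\epsilon^{-4}$, or a comparable combinatorial multiple. This constant depends only on the package, $\gamma$, $h_r$ and $h_s$, and is independent of $\bn$ and of the frozen classes, which is exactly the orbit-uniformity claimed.
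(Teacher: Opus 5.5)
Your setup is the paper's own: the same $x_j=T^{n_j}h_j$, the same frozen list $x_r^{[\gamma_r-1]},x_s^{[\gamma_s-1]},(x_j^{[\gamma_j]})_{j\ne r,s}$, and the same application of \cref{lem:uniform-infinitesimal-ptolemy} with $\omega_1=h_r$, $\omega_2=h_s$, $m_1=n_r$, $m_2=n_s$. Your identification of the right-hand side and of the constant $\gamma_r\gamma_s\cdot2\epsilon^{-4}$ is also correct.

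\textbf{The gap.} It lies in the only step that needs checking, namely expressing $\cT_{rs}(\phi_{\gamma,\bn})(\bn)$ through $B$. There you diagnose an obstacle that does not exist, and you never establish the identity that closes the argument.

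A first derivative $\partial_r$ hits exactly one of the $\gamma_r$ copies of $e^{(t_r-n_r)N}x_r$. So there is no $\gamma_r(\gamma_r-1)$ term with two copies hit; that happens only in $\partial_r^2$, which plays no role since $r\ne s$. At $\bt=\bn$ all these copies equal $x_r$. By symmetry of $Q$, each of the $\gamma_r$ Leibniz terms therefore equals $Q(Nx_r,x_r^{[\gamma_r-1]},x_s^{[\gamma_s]},\ldots)=B(Nx_r,x_s)$, whether the hit copy is the free one or a frozen one. The hits on frozen copies are not a remainder $R_r$; they make up exactly the factor $\gamma_r$. Likewise, each Leibniz term of $\partial_r\partial_s$ hits one $r$-copy and one $s$-copy, never the same copy twice. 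Hence
\[
\partial_r\phi_{\gamma,\bn}(\bn)=\gamma_rB(Nx_r,x_s),\qquad
\partial_s\phi_{\gamma,\bn}(\bn)=\gamma_sB(x_r,Nx_s),\qquad
\partial_r\partial_s\phi_{\gamma,\bn}(\bn)=\gamma_r\gamma_sB(Nx_r,Nx_s).
\]
So $\cT_{rs}(\phi_{\gamma,\bn})(\bn)$ is exactly $-\gamma_r\gamma_s$ times the quantity inside the absolute value in \eqref{eq:uniform-infinitesimal-ptolemy}, with no cross terms. This short computation is the paper's entire proof.

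\textbf{Why the hedge matters.} Your fallback would not have worked. If you also perturb the frozen copies to $x_j+\epsilon Nx_j$, the bilinear form itself becomes some $B_\epsilon$. The lemma's trick then only bounds the $2\times2$ determinant built from $B_\epsilon$, which is a polynomial in $\epsilon$ whose constant term is your target. The trick works in the lemma only because bilinearity in the two free slots makes $D_\epsilon$ exactly $\epsilon^2$ times the target. Here $\epsilon$ must stay fixed, since the constant $2\epsilon^{-4}$ degenerates as $\epsilon\to0$. An inequality at a fixed $\epsilon$ cannot isolate one coefficient of an $\epsilon$-expansion, so ``extracting the coefficient of the appropriate order'' is not a valid step.

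The fallback would also break the uniformity you claim. Requiring $h_j+\epsilon Nh_j\in\cC$ for every $j$ makes $\epsilon$, and hence the constant, depend on all the $h_j$. The statement demands dependence only on the package, $\gamma$, $h_r$, and $h_s$.
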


\begin{proof}
Fix $\bn=(n_1,\ldots,n_d)\in\bZ_{\ge0}^d$ and for simplicity denote
\[
x_j\coloneqq T^{n_j}h_j\in\cC,
\qquad 1\le j\le d.
\]
Using the following $d-2$ vectors as frozen arguments
\[
x_r^{[\gamma_r-1]},\qquad
x_s^{[\gamma_s-1]},\qquad
(x_j^{[\gamma_j]})_{j\ne r,s},
\]
where the last expression denotes the concatenated list with $x_j$ repeated $\gamma_j$ times for each $j\ne r,s$,
define the symmetric bilinear form on $V$ by
\[
B(u,v)
\coloneqq
Q\bigl(
u,v,
x_r^{[\gamma_r-1]},
x_s^{[\gamma_s-1]},
(x_j^{[\gamma_j]})_{j\ne r,s}
\bigr).
\]

Apply \cref{lem:uniform-infinitesimal-ptolemy} with
\[
\omega_1=h_r,\qquad
\omega_2=h_s,\qquad
m_1=n_r,\qquad
m_2=n_s.
\]
It gives a constant $C_{r,s}>0$, depending only on the package and on $h_r,h_s$, such that
\begin{align}\label{eq:pairwise-mixed-turan}
\begin{split}
& \bigl|B(x_r,x_s)B(Nx_r,Nx_s) - B(Nx_r,x_s)B(x_r,Nx_s)\bigr|\\
& \qquad\le C_{r,s} B(x_r,x_s)\sqrt{B(x_r,x_r)B(x_s,x_s)}.
\end{split}
\end{align}
Crucially, $C_{r,s}$ is independent of $\bn$. 

Note that
\[
\phi_{\gamma,\bn}(\bt)=Q_\gamma\bigl(e^{(t_1-n_1)N}x_1,\ldots,e^{(t_d-n_d)N}x_d\bigr) \in \bR[t_1,\dots,t_d].
\]
By multilinearity of $Q$ 
and taking partial derivatives, we have
\begin{align*}
 \phi_{\gamma,\bn}(\bn)&=B(x_r,x_s), \qquad\qquad \partial_r\phi_{\gamma,\bn}(\bn) =\gamma_rB(Nx_r,x_s),\\
\partial_s\phi_{\gamma,\bn}(\bn)&=\gamma_sB(x_r,Nx_s), \quad \ \partial_r\partial_s\phi_{\gamma,\bn}(\bn) =\gamma_r\gamma_sB(Nx_r,Nx_s).
\end{align*}
Moreover, the definition of the elementary transfers gives
\[
\begin{aligned}
B(x_r,x_r)
=\phi_{\gamma^{r\leftarrow s},\bn}(\bn),\qquad
B(x_s,x_s)
=\phi_{\gamma^{s\leftarrow r},\bn}(\bn).
\end{aligned}
\]
Substituting these identities into \eqref{eq:pairwise-mixed-turan} and multiplying by $\gamma_r\gamma_s$ therefore yield
\[
\bigl|\cT_{rs}(\phi_{\gamma,\bn})(\bn)\bigr|
\le
C_{\gamma,r,s}\,\phi_{\gamma,\bn}(\bn)
\sqrt{
\phi_{\gamma^{r\leftarrow s},\bn}(\bn)
\phi_{\gamma^{s\leftarrow r},\bn}(\bn)
},
\]
where $C_{\gamma,r,s}\coloneqq\gamma_r\gamma_sC_{r,s}$ depends only on the package, $\gamma$, $h_r$, $h_s$, and is independent of $\bn$.
\end{proof}

For a polynomial $P\in\bR[t_1,\dots,t_d]$ of total degree $M>0$, every pairwise Tur\'an polynomial $\cT_{rs}(P)$
has total degree at most $2M-2$.
The next lemma says that diagonal translation invariance prevents simultaneous cancellation of the degree-$(2M-2)$ terms.
Consequently, at least one off-diagonal Tur\'an polynomial has degree exactly $2M-2$.

\begin{lemma}
\label{lem:turan-degree-detection}
Let $d\ge2$ and let $P\in\bR[t_1,\ldots,t_d]$ be a polynomial of positive total degree.
Assume that
\[
P(\bt+c\bone)=P(\bt)
\]
for every $\bt\in\bR^d$ and $c\in\bR$, where $\bone=(1,\ldots,1)$.
Then there exist distinct $r,s$ such that
\[
\deg_{\tot}\cT_{rs}(P)=2\deg_{\tot} P-2.
\]
\end{lemma}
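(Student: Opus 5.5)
The plan is to reduce to the top homogeneous component of $P$ and then argue by contradiction through the vanishing of a full Hessian. Write $M\coloneqq\deg_{\tot}P>0$ and $P=P_M+(\text{lower order})$, where $P_M\neq0$ is homogeneous of degree $M$. Each $\cT_{rs}(P)$ has total degree at most $2M-2$. Its homogeneous part of degree exactly $2M-2$ is $\cT_{rs}(P_M)$, because both $(\partial_rP)(\partial_sP)$ and $P\,\partial_r\partial_sP$ have their top parts computed from $P_M$. It therefore suffices to find distinct $r,s$ with $\cT_{rs}(P_M)\neq0$.

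First we transfer the invariance to $P_M$. Differentiating $P(\bt+c\bone)=P(\bt)$ in $c$ at $c=0$ gives $\sum_{s=1}^d\partial_sP=0$. This identity is homogeneous-degree-preserving up to a shift by one, so it holds separately for each homogeneous component; in particular $\sum_s\partial_sP_M=0$.

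Next comes an Euler--Tur\'an identity. For each fixed $r$,
\[
\sum_{s=1}^d\cT_{rs}(P_M)=(\partial_rP_M)\sum_s\partial_sP_M-P_M\,\partial_r\sum_s\partial_sP_M=0 .
\]
Hence the diagonal term satisfies $\cT_{rr}(P_M)=-\sum_{s\ne r}\cT_{rs}(P_M)$. Now suppose, for contradiction, that $\cT_{rs}(P_M)=0$ for all distinct $r,s$. The identity then forces $\cT_{rr}(P_M)=0$ as well, so all entries vanish.

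In the field of rational functions this says
\[
\partial_r\bigl(\partial_sP_M/P_M\bigr)=-\cT_{rs}(P_M)/P_M^2=0
\]
for all $r,s$. Consequently each $\partial_sP_M/P_M$ is a constant $a_s\in\bR$, i.e. $\partial_sP_M=a_sP_M$. Comparing degrees, the left side is zero or homogeneous of degree $M-1$, while the right side is zero or of degree $M$. This forces $a_s=0$, so every $\partial_sP_M=0$ and $P_M$ is constant, contradicting $M>0$. Thus some off-diagonal $\cT_{rs}(P_M)$ is nonzero, and $\deg_{\tot}\cT_{rs}(P)=2M-2$ for that pair.

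The step that carries the content is the Euler--Tur\'an identity. Without translation invariance the off-diagonal terms could all cancel while only the diagonal ones survive; for example $P=t_1^M$ in $d\ge2$ variables has $\cT_{rs}(P)=0$ for all $r\ne s$. The identity converts the off-diagonal vanishing into vanishing of the entire Hessian of $\log P_M$, and everything else is routine. The edge case $M=1$ is covered by the same argument: there $\cT_{rs}(P_M)=(\partial_rP_M)(\partial_sP_M)$ is a constant, and the argument shows that some such constant is nonzero.
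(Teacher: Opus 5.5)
Your proof is correct. The reduction to the top component $P_M$ and the row-sum identity $\sum_s\cT_{rs}(P_M)=0$, which comes from $\sum_s\partial_sP_M=0$, are exactly the paper's first steps. Where you differ is in how you rule out the vanishing of every off-diagonal entry.

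The paper uses a direct identity. It combines the row sums with Euler's relations $\sum_i t_i\partial_iP_M=MP_M$ and $\sum_{i,j}t_it_j\partial_i\partial_jP_M=M(M-1)P_M$ to obtain
\[
\tfrac12\sum_{i,j}(t_i-t_j)^2\,\cT_{ij}(P_M)=-\sum_{i,j}t_it_j\,\cT_{ij}(P_M)=-MP_M^2\neq0 .
\]
Here the diagonal terms carry weight zero, so some off-diagonal $\cT_{rs}(P_M)$ must be nonzero.

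You instead use the row sums to propagate off-diagonal vanishing to the diagonal. Then the whole Hessian of $\log P_M$ vanishes, and integrating gives $\partial_sP_M=a_sP_M$. A nonzero polynomial of positive degree can satisfy this only if $a_s=0$.

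What each route buys:
\begin{itemize}
\item \emph{Your route} is conceptually clean: the log-Hessian is a zero-row-sum matrix, so it is determined by its off-diagonal part. Its last step does not use homogeneity at all, only that $\deg\partial_sQ<\deg Q$. So it shows that any nonconstant polynomial $Q$ with $\sum_s\partial_sQ=0$ has a nonzero off-diagonal Tur\'an polynomial. It does pass through the field of rational functions and uses the fact that a rational function with vanishing gradient is constant.
\item \emph{The paper's identity} needs homogeneity essentially, but it is an explicit certificate that avoids rational functions. It is also pointwise: at every $\bt$ with $P_M(\bt)\neq0$ one has $\sum_{i<j}(t_i-t_j)^2\,\cT_{ij}(P_M)(\bt)=-MP_M(\bt)^2<0$. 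Hence some off-diagonal $\cT_{rs}(P_M)$ is nonzero at that very point.
\end{itemize}
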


\begin{proof}
Set $M\coloneqq \deg_{\tot}P\in \bZ_{>0}$.
Let $P_M$ be the top homogeneous part of $P$.
Recall that
\[
\cT_{ij}(P_M) \coloneqq (\partial_iP_M)(\partial_jP_M) - P_M\partial_i\partial_jP_M.
\]
Diagonal translation invariance gives $\sum_j\partial_jP=0$ and hence $\sum_j\partial_jP_M=0$.
It follows that for every $i$,
\begin{align}
    \sum_j\cT_{ij}(P_M)=0. \label{eq: sum Tij=0}
\end{align}
Euler's identities give
\begin{align}
\sum_{1\le i\le d}t_i\partial_iP_M=MP_M,
\qquad
\sum_{1\le i,\, j\le d}t_it_j\partial_i\partial_jP_M=M(M-1)P_M. \label{eq: eulerx2}
\end{align}
Consequently, we have
\begin{align*}
\frac12\sum_{1\le i,\, j\le d}(t_i-t_j)^2\cT_{ij}(P_M)
&=-\sum_{1\le i,\, j\le d}t_it_j\cT_{ij}(P_M)\\
&=P_M\sum_{1\le i,\, j\le d}t_it_j\partial_i\partial_jP_M - \Biggl(\sum_{1\le i\le d}t_i\partial_iP_M\Biggr)^2\\
&=-MP_M^2\neq 0,
\end{align*}
where the first equality is by \eqref{eq: sum Tij=0} and the last is by \eqref{eq: eulerx2}.
Hence there exist $r\ne s$ such that $\cT_{rs}(P_M)\ne0$.
Note that the degree-$(2M-2)$ homogeneous component of $\cT_{rs}(P)$ is exactly $\cT_{rs}(P_M)$.
Hence $\deg_{\tot}\cT_{rs}(P)=2M-2$.
\end{proof}

\section{From Tur\'an estimates to growth bounds}\label{sec:abstract-growth}


\subsection{Unipotent two-block case: discrete Laplacian and degree-growth upper bounds}\label{subsec:two-block-case}

The mechanism underlying the discrete Laplacian upper bound $4$ and the quadratic upper bound for degree growth is most transparent in the unipotent two-block case.
As a warm-up, we first show how to use the orbit-uniform Tur\'an upper estimate to control the discrete Laplacian and to obtain the global upper bound via a discrete maximum principle.


\begin{theorem}
\label{thm:two-block-model}
Let $(V,\cC,Q,T=e^N)$ be a unipotent dynamical Hodge--Lorentz package of degree $d\ge2$ and fix $h_1,h_2\in\cC$.
For $0\le i\le d$, set
\[
\phi_i(t)
\coloneqq
Q\bigl((e^{tN}h_1)^{[i]},h_2^{[d-i]}\bigr) \in\bR[t],
\qquad
\mu_i\coloneqq\deg \phi_i.
\]
Then $\mu_i=\mu_{d-i}\in2\bZ_{\ge0}$ for every $0\le i\le d$, and, for every
$1\le i\le d-1$,
\begin{equation}\label{eq:two-block-curvature}
0
\le
\Delta\mu_i=2\mu_i-\mu_{i-1}-\mu_{i+1}
\le
4.
\end{equation}
In particular,
\[
\Delta\mu_i\in\{0,2,4\}.
\]
Consequently, for every $0\le i\le d$,
\[
\mu_i\le2i(d-i).
\]
\end{theorem}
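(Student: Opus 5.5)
The plan is to work with two-variable polynomials and read every exponent off a single one-variable polynomial. For each $0\le i\le d$ consider the canonical polynomial extension
\[
P_i(t_1,t_2)\coloneqq Q\bigl((e^{t_1N}h_1)^{[i]},(e^{t_2N}h_2)^{[d-i]}\bigr),
\]
which is $\Phi_{T,\gamma,\bh}$ for $\gamma=(i,d-i,0,\ldots,0)$. By \cref{lem:stabQ}, $P_i(\bt+c\bone)=P_i(\bt)$, so $P_i(t_1,t_2)=\phi_i(t_1-t_2)$ and $\deg_{\tot}P_i=\mu_i$.

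\textbf{Step 1: boundary values, parity and symmetry.} First, $\phi_0$ is constant, and $\phi_d(t)=Q(h_1^{[d]})$ by \cref{lem:stabQ}, so $\mu_0=\mu_d=0$.

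For parity, evaluate at integers. For $n\ge0$,
\[
\phi_i(n)=Q\bigl((T^nh_1)^{[i]},h_2^{[d-i]}\bigr)>0
\quad\text{and}\quad
\phi_i(-n)=P_i(0,n)=Q\bigl(h_1^{[i]},(T^nh_2)^{[d-i]}\bigr)>0,
\]
since $T$ preserves $\cC$. So $\phi_i$ is positive at all integers. A nonconstant real polynomial that is positive on all large integers of both signs has even degree, hence $\mu_i\in2\bZ_{\ge0}$.

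For the symmetry $\mu_i=\mu_{d-i}$, I would first record a \emph{cone-independence} fact. If $h,h'\in\cC$, then $h-\epsilon h'\in\cC$ for small $\epsilon>0$. Applying $T^n$ and using positivity and multilinearity of $Q$ shows that the sequences $n\mapsto Q((T^nh)^{[i]},h_2^{[d-i]})$ for different $h$ (and likewise in the other slot) are mutually comparable for $n\ge0$, so they have the same polynomial growth order. Now the growth of $\phi_i$ at $-\infty$ is that of $Q(h_1^{[i]},(T^nh_2)^{[d-i]})$. By cone-independence this has the same growth order as $Q((T^nh_1)^{[d-i]},h_1^{[i]})$, and hence the same as $\phi_{d-i}(n)$. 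Since a polynomial has the same degree at $+\infty$ and $-\infty$, we get $\mu_i=\mu_{d-i}$.

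\textbf{Step 2: the lower bound $\Delta\mu_i\ge0$.} Apply \cref{cor:mixed-log-concavity} with $T^nh_1,h_2\in\cC$ to get
\[
\phi_i(n)^2\ge\phi_{i-1}(n)\phi_{i+1}(n)
\]
for all $n\ge0$. All three values are positive, so comparing growth orders as $n\to\infty$ gives $2\mu_i\ge\mu_{i-1}+\mu_{i+1}$.

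\textbf{Step 3: the upper bound $\Delta\mu_i\le4$.} If $\mu_i=0$, then $\Delta\mu_i=-\mu_{i-1}-\mu_{i+1}\le0$ and there is nothing to prove. If $\mu_i>0$, apply \cref{thm:turan-upper} to $\gamma=(i,d-i,0,\ldots,0)$ with $r=1$, $s=2$ and $\bn=(n,0,\ldots,0)$; since $T$ is unipotent, $\phi_{\gamma,\bn}=P_i$.

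Writing $P_i=\phi_i(t_1-t_2)$, a direct computation gives
\[
\cT_{12}(P_i)=\phi_i\phi_i''-(\phi_i')^2 \quad\text{evaluated at } t_1-t_2.
\]
If $\phi_i$ has leading coefficient $a$, then $\phi_i\phi_i''-(\phi_i')^2$ has leading term $-a^2\mu_i\,t^{2\mu_i-2}$, which is nonzero; this is also the content of \cref{lem:turan-degree-detection}. So the left-hand side of \eqref{eq:turan-upper} at $\bn$ grows exactly like $n^{2\mu_i-2}$.

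The transfers $\gamma^{1\leftarrow2}$ and $\gamma^{2\leftarrow1}$ correspond to $\phi_{i+1}$ and $\phi_{i-1}$, so the right-hand side grows like $n^{\mu_i+(\mu_{i+1}+\mu_{i-1})/2}$. Comparing exponents gives
\[
2\mu_i-2\le\mu_i+\tfrac12(\mu_{i-1}+\mu_{i+1}),
\]
which is exactly $\Delta\mu_i\le4$. Combined with Step 2 and the parity from Step 1, this gives $\Delta\mu_i\in\{0,2,4\}$.

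\textbf{Step 4: discrete maximum principle.} Set $\psi_i\coloneqq2i(d-i)$, so that $\Delta\psi_i=4$ and $\psi_0=\psi_d=0$. Then $u_i\coloneqq\mu_i-\psi_i$ satisfies $\Delta u_i\le0$, i.e.\ $(u_i)_i$ is a convex sequence. Since $u_0=u_d=0$, convexity forces $u_i\le0$ for all $i$, which is $\mu_i\le2i(d-i)$.

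I expect the main obstacle to be Step 1: the symmetry and evenness require controlling $\phi_i$ at $-\infty$, which forces the move of $T^n$ onto $h_2$ and the cone-independence comparison, rather than using only forward orbits of $h_1$.
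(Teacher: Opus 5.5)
Your proof is correct and follows essentially the same route as the paper. Parity and symmetry come from positivity of $\phi_i$ at integers of both signs, using diagonal $N$-invariance and cone comparison; $\Delta\mu_i\ge0$ comes from log-concavity; $\Delta\mu_i\le4$ comes from \cref{thm:turan-upper} with $\gamma=(i,d-i,0,\ldots,0)$ and $\bn=(n,0,\ldots,0)$; and $\mu_i\le2i(d-i)$ follows from the discrete comparison principle. Your explicit cone-comparison argument for $\mu_i=\mu_{d-i}$, and your direct handling of the case $\mu_i=0$, simply spell out steps the paper compresses (``by \cref{lem:stabQ} and openness of $\cC$'', resp.\ ``all $\mu_j$ vanish'').
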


We first prove the parity assertion for general mixed orbit functions.

\begin{lemma}
\label{lem:unipotent-mixed-parity}
Let $(V,\cC,Q,T=e^N)$ be a unipotent dynamical Hodge--Lorentz package of degree $d\ge2$. Fix $\bh=(h_1,\ldots,h_d)\in\cC^d$ and $\gamma\in\Comp(d)$.
Then the canonical polynomial extension of the mixed orbit function $\Phi_{T,\gamma,\bh}$ defined in \cref{def:mixed-orbit-functions} has even total degree:
\[
\mu_\gamma\coloneqq\deg_{\tot}\Phi_{T,\gamma,\bh}\in2\bZ_{\ge0}.
\]
\end{lemma}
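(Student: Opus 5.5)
The plan is to combine diagonal translation invariance, which comes from \cref{lem:stabQ}, with positivity of $Q$ on $\cC$, and then pass to the top homogeneous part.

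Write $P(\bt)\coloneqq\Phi_{T,\gamma,\bh}(\bt)=Q_\gamma(e^{t_1N}h_1,\ldots,e^{t_dN}h_d)$ and $M\coloneqq\mu_\gamma$. If $M=0$ there is nothing to prove, so assume $M>0$ and let $P_M$ be the top homogeneous part of $P$.

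\emph{Translation invariance.} Since $T=e^N$ is a $Q$-similitude, \cref{lem:multi-JC} (or directly the unipotence of $\chi(e^N)$) gives $N\in\stab(Q)$. By \cref{lem:stabQ}, $e^{cN}$ is then a $Q$-isometry for every $c\in\bR$. Applying it to all $d$ arguments gives
\[
P(\bt+c\bone)=P(\bt)\qquad\text{for all }\bt\in\bR^d,\ c\in\bR.
\]

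\emph{Positivity on $\bZ^d$.} For $\bn\in\bZ_{\ge0}^d$, each $T^{n_j}h_j$ lies in $\cC$ because $T(\cC)\subseteq\cC$. Positivity of the package therefore gives $P(\bn)>0$. For an arbitrary $\bn\in\bZ^d$, choose an integer $c\ge\max_j(-n_j)$. Then $\bn+c\bone\in\bZ_{\ge0}^d$, so translation invariance gives $P(\bn)=P(\bn+c\bone)>0$. This shift is the step that avoids needing $T^{-1}$ to preserve the cone, and it is the only delicate point.

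\emph{Nonnegativity of $P_M$.} Fix $\bx\in\bQ^d$ and choose $q\in\bZ_{>0}$ with $q\bx\in\bZ^d$. Then
\[
P_M(\bx)=\lim_{k\to\infty}\frac{P(kq\bx)}{(kq)^M}\ \ge\ 0,
\]
since every term in the limit is positive. By continuity, $P_M\ge0$ on all of $\bR^d$.

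\emph{Conclusion.} Homogeneity gives $P_M(-\bx)=(-1)^MP_M(\bx)$. If $M$ were odd, then $P_M(\bx)=-P_M(-\bx)\le0$ for every $\bx$, so together with $P_M\ge0$ we would get $P_M\equiv0$. This contradicts $\deg_{\tot}P=M$, so $M$ is even.
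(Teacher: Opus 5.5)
Your proof is correct and follows essentially the same route as the paper: diagonal $N$-invariance, the shift $\bn\mapsto\bn+k\bone$ to get positivity on all of $\bZ^d$, and a sign argument on the top homogeneous part. The only cosmetic difference is in the last step: you show $P_M\ge0$ on $\bR^d$ and then invoke oddness, whereas the paper picks $\ba\in\bZ^d$ with $P_{\mu_\gamma}(\ba)\neq0$ and compares the signs of $P(n\ba)$ as $n\to\pm\infty$.
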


\begin{proof}
Denote  $P(\bt)\coloneqq \Phi_{T,\gamma,\bh}(\bt)\in \bR[\bt]$ for simplicity.
Since $N\in \stab(Q)$, diagonal $N$-invariance \cref{lem:stabQ} gives
\[
P(\bt+c\bone)=P(\bt)
\qquad(\bt\in\bR^d,\ c\in\bR).
\]
For any $\bn\in\bZ^d$, choose an integer $k\ge1$ sufficiently large such that $\bn+k\bone\in\bZ_{\ge0}^d$.
Hence $T^{\bn+k\bone}\bh\in \cC^d$.
Cone positivity then gives
\[
P(\bn)=P(\bn+k\bone)=Q_\gamma(T^{\bn+k\bone}\bh)>0.
\]
Thus $P$ is positive on $\bZ^d$.
Let $P_{\mu_\gamma}$ be the top homogeneous part of $P$.
Choose $\ba\in\bZ^d$ with $P_{\mu_\gamma}(\ba)\ne0$.
If $\mu_\gamma$ were odd, then
\[
P(n\ba)=n^{\mu_\gamma}P_{\mu_\gamma}(\ba)+O(|n|^{\mu_\gamma-1})
\]
would have opposite signs as $n\to\pm \infty$, 
contradicting
$P(n\ba)>0$ for every $n\in\bZ$.  Hence $\mu_\gamma$ is even.
\end{proof}

We then recall an elementary comparison that will also be used on maximal affine intervals in \cref{thm:abstract-turan-dichotomy}; see its general graph form in \cref{lem:transfer-maximum-principle}.

\begin{lemma}
\label{lem:discrete-comparison}
Let $r<s$ be integers, and let $(a_i)_{i=r}^s$ and $(b_i)_{i=r}^s$ be real sequences.
If
\[
a_r\le b_r,
\qquad
a_s\le b_s,
\qquad
\Delta a_i\le\Delta b_i
\quad(r<i<s),
\]
then $a_i\le b_i$ for every $r\le i\le s$.
\end{lemma}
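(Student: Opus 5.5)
We argue with the difference sequence $c_i\coloneqq a_i-b_i$ for $r\le i\le s$, which is a standard discrete maximum principle. Because $\Delta$ is linear, the hypotheses say that $c_r\le0$, $c_s\le0$, and $\Delta c_i\le0$ for every $r<i<s$. The last condition means $2c_i\le c_{i-1}+c_{i+1}$, that is, $c_i-c_{i-1}\le c_{i+1}-c_i$. So the forward differences $\delta_i\coloneqq c_i-c_{i-1}$ are nondecreasing in $i$ for $r<i\le s$, and $(c_i)$ is a discretely convex sequence. We must show $c_i\le 0$ for all $r\le i\le s$. If $s=r+1$ there are no interior indices and the claim is just the two endpoint hypotheses, so we may assume $s\ge r+2$.

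The key step is the convexity bound against the chord. Fix an index with $r<i<s$. Writing $c_i-c_r=\sum_{k=r+1}^{i}\delta_k$ and $c_s-c_i=\sum_{k=i+1}^{s}\delta_k$, monotonicity of $\delta$ gives
\[
\frac{c_i-c_r}{i-r}\le \delta_i\le\delta_{i+1}\le\frac{c_s-c_i}{s-i}.
\]
Multiplying out yields $(s-r)c_i\le (s-i)c_r+(i-r)c_s$. The right-hand side is a combination of $c_r$ and $c_s$ with nonnegative weights, so it is $\le0$. Hence $c_i\le0$, which is $a_i\le b_i$.

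As an alternative check, suppose the maximum $m$ of $c$ on $[r,s]$ were positive. Then it is attained at some interior index, and we take $i$ to be the smallest such index. This gives $c_{i-1}<m$ and $c_{i+1}\le m$, so $c_{i-1}+c_{i+1}<2c_i$, contradicting $\Delta c_i\le 0$.

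There is no real obstacle here. The only care needed is the degenerate case $s=r+1$ and writing the inequalities in the right direction given the paper's sign convention $\Delta a_i=2a_i-a_{i-1}-a_{i+1}$.
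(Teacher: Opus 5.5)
Your proof is correct, and both of your arguments work. Your ``alternative check'' is the paper's own proof with the sign flipped. The paper sets $\delta_i=b_i-a_i$ and takes the largest index at which a negative minimum of $\delta$ is attained; you take the smallest index at which a positive maximum of $a_i-b_i$ is attained.

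Your main chord argument is the alternative the paper mentions only in a footnote. It yields the slightly stronger quantitative bound $(s-r)(a_i-b_i)\le(s-i)(a_r-b_r)+(i-r)(a_s-b_s)$, but it relies on the linear order of the index set. The paper prefers the extremal-index version because it carries over verbatim to the transfer-graph maximum principle, \cref{lem:transfer-maximum-principle}, on $\Comp(d)$. There the admissible direction $\{r,s\}$ depends on $\gamma$, so no single chord is available; instead one picks a lexicographically maximal global minimizer.
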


\begin{proof}
Set $\delta_i=b_i-a_i$.
Then $\delta_r,\delta_s\ge0$ and $\Delta\delta_i\ge0$ for $r<i<s$.\footnote{Alternatively, one can readily deduce the assertion from the fact that a concave sequence lies above the line segment joining its endpoints. Here we use proof by contradiction as an exact parallel of the proof of \cref{lem:transfer-maximum-principle}.} Suppose that $\delta_i<0$ for some $i$, we may choose $j$ maximal with the property
\[
\delta_j=\min\{\delta_i\mid r\leq i\leq s\}<0.
\]
Then $r<j<s$ and 
\[
2\delta_j-\delta_{j-1}-\delta_{j+1}=\Delta\delta_j\ge0.
\]
By the minimality of $\delta_j$, we have $\delta_j=\delta_{j-1}=\delta_{j+1}$, which contradicts the maximality of $j$. 
Therefore $\delta_i\geq 0$ for all $r\leq i\leq s$ and hence $a_i\leq b_i$.
\end{proof}

\begin{proof}[Proof of \cref{thm:two-block-model}]
Since $N$ is nilpotent, every $\phi_i$ is a polynomial.
The parity assertion is the two-block specialization of
\cref{lem:unipotent-mixed-parity}.
By \cref{lem:stabQ} and openness of the cone $\cC$,
\[
\phi_i(n)=Q((e^{nN}h_1)^{[i]},h_2^{[d-i]})=Q(h_1^{[i]},(e^{-nN}h_2)^{[d-i]}) \asymp \phi_{d-i}(-n).
\]
Hence $\mu_i=\mu_{d-i}$.
Note also that both $\phi_0(t)=Q(h_2^{[d]})$ and $\phi_d(t)=Q((e^{tN}h_1)^{[d]})=Q(h_1^{[d]})$ are constants.
So $\mu_0=\mu_d=0$.

For every $0\le i\le d$ and every $n\in\bZ_{\ge0}$, the vectors $T^nh_1$ and $h_2$ belong to $\cC$.
Hence $\phi_i(n)>0$, and every $\phi_i$ has positive leading coefficient.
By the log-concavity of mixed $Q$-coefficients, \cref{cor:mixed-log-concavity},
\[
\phi_i(n)^2\ge \phi_{i-1}(n)\phi_{i+1}(n).
\]
Comparing degrees yields $\Delta\mu_i\ge0$.
If $\mu_i=0$ for some $1\le i\le d-1$, then this inequality and $\mu_j\ge0$ force all $\mu_j$ to vanish.
We may therefore assume that $\mu_i>0$ for every $1\le i\le d-1$.

We apply \cref{thm:turan-upper} to our unipotent dynamical Hodge--Lorentz package $(V,\cC,Q,T=e^N)$, $\gamma=(i,d-i,0,\dots,0)$, $\bh=(h_1,h_2,h_2,\ldots,h_2)$, and $\bn=(n,0,\dots,0)$.
Then there is a constant $C>0$, independent of $n$, such that
\begin{align}\label{eq:two-block-bilinear-turan}
\bigl|\cT_{12}(\phi_{\gamma,\bn})(\bn)\bigr|
\le
C\,\phi_{\gamma,\bn}(\bn)
\sqrt{
\phi_{\gamma^{1\leftarrow 2},\bn}(\bn)
\phi_{\gamma^{2\leftarrow 1},\bn}(\bn)}.
\end{align}
Since we are in the two-block case $\gamma=(i,d-i,0,\dots,0)$,
\begin{align*}
\phi_{\gamma,\bn}(t_1,t_2, 0,\dots, 0)&=\phi_i(t_1-t_2),\\
\phi_{\gamma^{1\leftarrow 2},\bn}(t_1,t_2, 0,\dots, 0)&=\phi_{i+1}(t_1-t_2),\\
\phi_{\gamma^{2\leftarrow 1},\bn}(t_1,t_2, 0,\dots, 0)&=\phi_{i-1}(t_1-t_2).
\end{align*}
Differentiating at $(t_1,t_2)=(n,0)$ yields
\begin{align*}
\partial_1\phi_{\gamma,\bn}(n, 0,\dots, 0)&=\phi'_i(n),\\
\partial_2\phi_{\gamma,\bn}(n, 0,\dots, 0)&=-\phi'_i(n),\\
\partial_1\partial_2\phi_{\gamma,\bn}(n, 0,\dots, 0)&=-\phi''_i(n).
\end{align*}
Substituting all these identities into \eqref{eq:two-block-bilinear-turan}, we obtain
\begin{equation}\label{eq:two-block-one-variable-turan}
\left|(\phi_i'(n))^2-\phi_i(n)\phi_i''(n)\right|
\le C\,\phi_i(n)\sqrt{\phi_{i-1}(n)\phi_{i+1}(n)}.
\end{equation}

For each $i$, write $\phi_i(t)=a_it^{\mu_i}+O(t^{\mu_i-1})$ with $a_i>0$.
Since $\mu_i>0$,
\[
(\phi_i'(t))^2-\phi_i(t)\phi_i''(t)
\sim
\mu_i a_i^2t^{2\mu_i-2}.
\]
Comparing powers of $n$ in \eqref{eq:two-block-one-variable-turan} gives
\[
2\mu_i-2
\le
\mu_i+\frac12(\mu_{i-1}+\mu_{i+1}),
\]
and hence $\Delta\mu_i\le4$.
Together with the lower bound, this proves \eqref{eq:two-block-curvature}.

Finally, set $b_i\coloneqq2i(d-i)$.
Since $\mu_0=b_0=\mu_d=b_d=0$, and $\Delta\mu_i\le4=\Delta b_i$, the discrete maximum principle, \cref{lem:discrete-comparison}, gives $\mu_i\le b_i=2i(d-i)$.
\end{proof}

\subsection{Unipotent mixed-orbit growth}\label{subsec:unipotent-mixed}


The main result of this subsection is the abstract counterpart of \cref{thm:zero-entropy-mixed}, which gives a sharp upper bound for the mixed orbit function in the unipotent case.

\begin{theorem}
\label{thm:unipotent-mixed-orbit-bound}
Let $(V,\cC,Q,T=e^N)$ be a unipotent dynamical Hodge--Lorentz package of degree $d\ge2$.
Then for any $\bh=(h_1,\ldots,h_d)\in\cC^d$ and any $\gamma\in\Comp(d)$, the canonical polynomial extension of the mixed orbit function $\Phi_{T,\gamma,\bh}$ to $\bR^d$ satisfies
\[
\deg_{\tot}\Phi_{T,\gamma,\bh}\in2\bZ_{\ge0},
\qquad
\deg_{\tot}\Phi_{T,\gamma,\bh}
\le
b_\gamma
\coloneqq
d^2-\|\gamma\|_2^2.
\]
Consequently, for every $\bl=(\ell_1,\ldots,\ell_d)\in\bZ^d$,
\[
\Phi_{T,\gamma,\bh}(n\bl)
=
Q_\gamma\bigl(
T^{\ell_1n}h_1,\ldots,T^{\ell_dn}h_d
\bigr)
=
O(n^{b_\gamma})
\qquad(n\to\infty).
\]
\end{theorem}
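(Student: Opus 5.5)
Parity is exactly \cref{lem:unipotent-mixed-parity}. The ray estimate follows from the total-degree bound, since $\Phi_{T,\gamma,\bh}(n\bl)$ is a polynomial in $n$ of degree at most $\deg_{\tot}\Phi_{T,\gamma,\bh}$. So the real content is $\mu_\gamma\coloneqq\deg_{\tot}\Phi_{T,\gamma,\bh}\le b_\gamma$. We work with $\bh$ fixed and regard $\gamma\mapsto\mu_\gamma$ as a function on the finite set $\Comp(d)$. Since the package is unipotent, $\phi_{\gamma,\bn}=\Phi_{T,\gamma,\bh}=:P_\gamma$ is independent of $\bn$. The plan is to imitate the proof of \cref{thm:two-block-model}, with the graph on $\Comp(d)$ whose edges are the elementary transfers $\gamma\to\gamma^{r\leftarrow s}$ replacing the path $0,1,\dots,d$.

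\textbf{Step 1 (local defect bound).} Fix $\gamma$ with $\mu_\gamma>0$. Since $P_\gamma$ is invariant under diagonal translation (\cref{lem:stabQ}), \cref{lem:turan-degree-detection} gives distinct $r,s$ with $\deg_{\tot}\cT_{rs}(P_\gamma)=2\mu_\gamma-2$. Necessarily $r,s\in\supp(\gamma)$: if $\gamma_r=0$ then $P_\gamma$ does not depend on $t_r$ and $\cT_{rs}(P_\gamma)=0$. By \cref{thm:turan-upper}, for all $\bn\in\bZ_{\ge0}^d$,
\[
|\cT_{rs}(P_\gamma)(\bn)|\le C\,P_\gamma(\bn)\sqrt{P_{\gamma^{r\leftarrow s}}(\bn)P_{\gamma^{s\leftarrow r}}(\bn)}.
\]
All these polynomials are diagonally invariant and positive on $\bZ^d$. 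Hence the inequality extends to all of $\bZ^d$ by shifting by $k\bone$, which leaves both sides unchanged.

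Choose $\ba\in\bZ^d$ at which the top homogeneous part of $\cT_{rs}(P_\gamma)$ does not vanish. We may also take $\ba$ generic so that the top parts of $P_\gamma$ and of the two transferred polynomials are nonzero at $\ba$; this is possible because a finite union of proper hypersurfaces cannot contain $\bZ^d$. Evaluating along $n\ba$ with $n\to\infty$ and comparing powers of $n$ gives
\[
2\mu_\gamma-2\le \mu_\gamma+\tfrac12\bigl(\mu_{\gamma^{r\leftarrow s}}+\mu_{\gamma^{s\leftarrow r}}\bigr),
\]
that is,
\[
2\mu_\gamma-\mu_{\gamma^{r\leftarrow s}}-\mu_{\gamma^{s\leftarrow r}}\le4.
\]
For the comparison function, a direct computation gives
\[
2b_\gamma-b_{\gamma^{r\leftarrow s}}-b_{\gamma^{s\leftarrow r}}=4
\]
for every transfer pair with $\gamma_r,\gamma_s\ge1$. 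Thus $\delta_\gamma\coloneqq b_\gamma-\mu_\gamma$ satisfies the discrete superharmonicity
\[
2\delta_\gamma-\delta_{\gamma^{r\leftarrow s}}-\delta_{\gamma^{s\leftarrow r}}\ge0
\]
along at least one active pair, whenever $\mu_\gamma>0$.

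\textbf{Step 2 (maximum principle on the transfer graph).} Suppose $\min_\gamma\delta_\gamma<0$. Among the minimizers, choose $\gamma$ maximizing a strictly convex potential, say $\|\gamma\|_2^2$. At a minimizer $\mu_\gamma>b_\gamma\ge0$, so Step 1 applies and yields an active pair $(r,s)$. Minimality then forces
\[
\delta_{\gamma^{r\leftarrow s}}=\delta_{\gamma^{s\leftarrow r}}=\delta_\gamma .
\]
However,
\[
\|\gamma^{r\leftarrow s}\|_2^2+\|\gamma^{s\leftarrow r}\|_2^2=2\|\gamma\|_2^2+4,
\]
so one of the two neighbours is a minimizer with strictly larger potential. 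This contradicts the choice of $\gamma$.

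The argument terminates because the only points where the boundary condition must hold are those with $b_\gamma=0$, namely $\gamma=d\,e_j$. There $P_\gamma=Q(h_j^{[d]})$ is constant, so $\mu_\gamma=0=b_\gamma$ and $\delta_\gamma=0$. Consequently the minimizer above can never be of this form, and the contradiction stands. Hence $\mu_\gamma\le b_\gamma$ for all $\gamma$.

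The main obstacle is Step 1: making sure the Tur\'an degree detection transfers to a genuine exponent inequality. This requires evaluating along a single integral ray on which all four relevant polynomials attain their top degrees, together with the positivity extension to all of $\bZ^d$. Once that is in place, the combinatorial maximum principle is routine.
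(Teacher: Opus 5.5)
Your proposal is correct and follows the paper's proof essentially step for step. You get parity from \cref{lem:unipotent-mixed-parity}, the local defect bound $2\mu_\gamma-\mu_{\gamma^{r\leftarrow s}}-\mu_{\gamma^{s\leftarrow r}}\le4$ from Tur\'an degree detection combined with the orbit-uniform upper estimate along an integral ray, and then a maximum principle on the transfer graph against $b_\gamma$.

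The differences are cosmetic. You break ties among minimizers by maximizing $\|\gamma\|_2^2$ rather than lexicographically. You extend the estimate to all of $\bZ^d$ by diagonal shifts instead of simply choosing $\ba\in\bZ_{>0}^d$. The genericity you impose on the transferred polynomials is unnecessary, since only the upper bounds $O(n^{\mu_\eta})$ are used.
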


Throughout the proof, fix a unipotent dynamical Hodge--Lorentz package $(V,\cC,Q,T=e^N)$ of degree $d\ge2$.
The transfer graph has vertex set $\Comp(d)$, and two vertices are adjacent if they differ by an elementary transfer.
For the local argument, if $r\ne s$ belong to $\supp(\gamma)$, the two opposite transfers associated with the unordered pair $\{r,s\}$ are
\[
\gamma^{r\leftarrow s}=\gamma+e_r-e_s,
\qquad
\gamma^{s\leftarrow r}=\gamma+e_s-e_r.
\]

Combining the Tur\'an degree-detection \cref{lem:turan-degree-detection} with the orbit-uniform Tur\'an upper estimate \cref{thm:turan-upper} yields the following local transfer inequality.

\begin{lemma}
\label{lem:local-transfer}
Fix $\bh=(h_1,\ldots,h_d)\in\cC^d$.
For every $\eta\in\Comp(d)$, set
\[
\Phi_\eta(\bt)
\coloneqq
Q_\eta(e^{t_1N}h_1,\ldots,e^{t_dN}h_d),
\qquad
\mu_\eta
\coloneqq
\deg_{\tot}\Phi_\eta.
\]
For every $\gamma\in\Comp(d)$ with $\mu_\gamma>0$, there exist distinct $r,s\in\supp(\gamma)$ such that
\[
2\mu_\gamma
-
\mu_{\gamma^{r\leftarrow s}}
-
\mu_{\gamma^{s\leftarrow r}}
\le4.
\]
\end{lemma}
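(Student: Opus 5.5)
Set $P\coloneqq\Phi_\gamma$. The plan is to combine the Tur\'an degree detection of \cref{lem:turan-degree-detection} with the orbit-uniform upper estimate of \cref{thm:turan-upper}, and then compare growth rates along a well-chosen integer ray.

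First, by \cref{lem:stabQ}, $P(\bt+c\bone)=P(\bt)$ for all $c\in\bR$. Since $\mu_\gamma>0$, \cref{lem:turan-degree-detection} supplies distinct $r,s$ with
\[
\deg_{\tot}\cT_{rs}(P)=2\mu_\gamma-2 .
\]
These indices must lie in $\supp(\gamma)$. Indeed, if $\gamma_r=0$, then $P$ does not depend on $t_r$, so $\partial_rP=0$ and $\cT_{rs}(P)=0$.

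Next, in the unipotent case the unipotent-shift polynomials satisfy $\phi_{\eta,\bn}=\Phi_\eta$ for every $\eta$ and every $\bn$. So \cref{thm:turan-upper} gives, for all $\bn\in\bZ_{\ge0}^d$,
\[
|\cT_{rs}(P)(\bn)|\le C\,P(\bn)\sqrt{\Phi_{\gamma^{r\leftarrow s}}(\bn)\Phi_{\gamma^{s\leftarrow r}}(\bn)} .
\]
By diagonal invariance of all the $\Phi_\eta$, this inequality holds on all of $\bZ^d$. We may also shift $\bn$ by $k\bone$ to stay in the nonnegative orthant, which leaves every term unchanged. All the $\Phi_\eta$ are positive on $\bZ^d$, as in the proof of \cref{lem:unipotent-mixed-parity}.

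To convert this into a degree inequality, let $T_{\mathrm{top}}\coloneqq\cT_{rs}(P_{\mu_\gamma})$ be the top part of $\cT_{rs}(P)$, which is nonzero. Pick $\ba\in\bZ^d$ with $T_{\mathrm{top}}(\ba)\neq0$; this is possible because a nonzero polynomial cannot vanish on all of $\bZ^d$. Along the ray $n\ba$ we then have
\[
|\cT_{rs}(P)(n\ba)|\asymp n^{2\mu_\gamma-2}.
\]
The right-hand side is bounded trivially: any polynomial of degree $\mu_\eta$ satisfies $|\Phi_\eta(n\ba)|=O(n^{\mu_\eta})$. This yields
\[
2\mu_\gamma-2\le\mu_\gamma+\tfrac12\bigl(\mu_{\gamma^{r\leftarrow s}}+\mu_{\gamma^{s\leftarrow r}}\bigr),
\]
which is exactly the claimed inequality $2\mu_\gamma-\mu_{\gamma^{r\leftarrow s}}-\mu_{\gamma^{s\leftarrow r}}\le4$.

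The main subtlety is getting exact growth of the left-hand side along a ray, since degree cancellation in the top part could otherwise occur. This is handled by choosing $\ba$ generically for the nonzero top form $T_{\mathrm{top}}$, which is guaranteed by degree detection. On the right-hand side only upper bounds are needed, so no genericity is required there.
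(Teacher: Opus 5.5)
Your proposal is correct and follows essentially the same route as the paper. It applies degree detection via \cref{lem:turan-degree-detection}, with $r,s$ forced into $\supp(\gamma)$ because $\cT_{rs}(\Phi_\gamma)=0$ as soon as $\gamma_r=0$ or $\gamma_s=0$. It then uses the uniform estimate of \cref{thm:turan-upper} with $\phi_{\eta,\bn}=\Phi_\eta$, and compares powers of $n$ along a ray on which the nonzero top part of $\cT_{rs}(\Phi_\gamma)$ does not vanish.

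The only difference is cosmetic. The paper picks $\ba\in\bZ_{>0}^d$ directly, using Zariski density of the positive lattice, so $n\ba$ stays in the orthant where \cref{thm:turan-upper} applies. You instead extend the estimate to all of $\bZ^d$ using diagonal translation invariance of $\cT_{rs}(\Phi_\gamma)$ and of every $\Phi_\eta$. Both arguments are valid.
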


\begin{proof}
Since $N\in\stab(Q)$, one has
\[
\Phi_\gamma(\bt+c\bone)=\Phi_\gamma(\bt)
\]
for every $\bt\in\bR^d$ and $c\in\bR$.
Moreover, if $j\notin\supp(\gamma)$, then $\Phi_\gamma$ is independent of $t_j$, and hence
\[
\cT_{jk}(\Phi_\gamma)=0
\qquad\text{for every }k.
\]
Therefore \cref{lem:turan-degree-detection} gives distinct $r,s\in\supp(\gamma)$ such that $\cT_{rs}(\Phi_\gamma)\neq0$ and
\[
\deg_{\tot}\cT_{rs}(\Phi_\gamma)
=
2\mu_\gamma-2.
\]

Since $T=e^N$ is unipotent, the unipotent-shift polynomials $\phi_{\eta,\bn}(\bt)=\Phi_\eta(\bt)$ for every $\eta\in\Comp(d)$ and $\bn\in\bZ_{\ge0}^d$; see \cref{def:unipotent-shift-poly}.
Thus \cref{thm:turan-upper} gives, uniformly in $\bn$,
\[
\bigl|\cT_{rs}(\Phi_\gamma)(\bn)\bigr|
\le
C\Phi_\gamma(\bn)
\sqrt{
\Phi_{\gamma^{r\leftarrow s}}(\bn)
\Phi_{\gamma^{s\leftarrow r}}(\bn)
}.
\]

Let $R\neq0$ be the top homogeneous part of $\cT_{rs}(\Phi_\gamma)$.
Choose $\ba\in\bZ_{>0}^d$ with $R(\ba)\ne0$, possible because the positive integer lattice is Zariski dense in $\bR^d$.
Indeed, a polynomial vanishing on $\bZ_{>0}^d$ is zero, by induction on $d$ from the one-variable case.
Then
\[
\bigl|\cT_{rs}(\Phi_\gamma)(n\ba)\bigr|
\asymp
n^{2\mu_\gamma-2}.
\]
On the other hand, 
$\Phi_\eta(n\ba)=O(n^{\mu_\eta})$
for every $\eta\in\Comp(d)$.
Substituting $\bn=n\ba$ into the preceding estimate and comparing powers of $n$ yields
\[
2\mu_\gamma-2
\le
\mu_\gamma+
\frac12\left(
\mu_{\gamma^{r\leftarrow s}}
+
\mu_{\gamma^{s\leftarrow r}}
\right),
\]
which is equivalent to the asserted inequality.
\end{proof}

Finally, a transfer-graph maximum principle will give us the desired upper bound.

\begin{lemma}
\label{lem:transfer-maximum-principle}
Let 
\begin{align*}
\nu, \,\nu'\colon\Comp(d)\to\bR, \quad \gamma\mapsto\nu_\gamma, \,\nu'_\gamma,
\end{align*}
be two functions.
Suppose that for every $\gamma \in \Comp(d)$ with $\nu_\gamma>\nu'_\gamma$, there exist distinct $r,s\in\supp(\gamma)$ such that
\[
2\nu_\gamma
-
\nu_{\gamma^{r\leftarrow s}}
-
\nu_{\gamma^{s\leftarrow r}}
\leq 2\nu'_\gamma
-
\nu'_{\gamma^{r\leftarrow s}}
-
\nu'_{\gamma^{s\leftarrow r}}.
\]
Then for every $\gamma\in\Comp(d)$,
\[
\nu_\gamma\le \nu'_\gamma.
\]
\end{lemma}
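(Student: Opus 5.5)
Set $\delta_\gamma\coloneqq\nu'_\gamma-\nu_\gamma$ and argue by contradiction, exactly as in the proof of \cref{lem:discrete-comparison}, now on the finite transfer graph $\Comp(d)$. In terms of $\delta$, the hypothesis says: whenever $\delta_\gamma<0$, there are distinct $r,s\in\supp(\gamma)$ with
\[
2\delta_\gamma-\delta_{\gamma^{r\leftarrow s}}-\delta_{\gamma^{s\leftarrow r}}\ge0 .
\]
This is obtained by moving everything in the hypothesis to one side.

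Suppose some $\delta_\gamma<0$. Since $\Comp(d)$ is finite, $m\coloneqq\min_\gamma\delta_\gamma<0$ is attained. We need an extremal choice among the minimizers so that the inequality forces a contradiction. In the one-dimensional lemma this was "maximal index $j$"; here we choose a potential function that strictly increases along one of the two transfers. I would take the strictly convex quantity $\|\gamma\|_2^2$ and pick $\gamma$ to be a minimizer of $\delta$ that maximizes $\|\gamma\|_2^2$.

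Apply the hypothesis at this $\gamma$, using $\delta_\gamma=m<0$, to get $r\ne s$ in $\supp(\gamma)$. The minimality of $m$ gives $\delta_{\gamma^{r\leftarrow s}},\delta_{\gamma^{s\leftarrow r}}\ge m$. Combined with $2m-\delta_{\gamma^{r\leftarrow s}}-\delta_{\gamma^{s\leftarrow r}}\ge0$, this forces both to equal $m$, so both neighbours are again minimizers. The key computation is
\[
\|\gamma^{r\leftarrow s}\|_2^2+\|\gamma^{s\leftarrow r}\|_2^2-2\|\gamma\|_2^2=\bigl(2\gamma_r-2\gamma_s+2\bigr)+\bigl(2\gamma_s-2\gamma_r+2\bigr)=4>0 .
\]
Hence at least one of the two neighbours has strictly larger $\|\cdot\|_2^2$ than $\gamma$, which contradicts the maximality in our choice of $\gamma$. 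Therefore $\delta\ge0$ everywhere, i.e. $\nu_\gamma\le\nu'_\gamma$.

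The only point needing care is that both transfers are well defined, i.e. $\gamma_r,\gamma_s>0$. This is guaranteed because $r,s\in\supp(\gamma)$, so $\gamma^{r\leftarrow s}$ and $\gamma^{s\leftarrow r}$ lie in $\Comp(d)$. The rest is the strict-convexity computation above, which plays the role of "choose $j$ maximal" in \cref{lem:discrete-comparison}; no genuine obstacle is expected.
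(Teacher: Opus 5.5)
Your proof is correct and follows essentially the paper's argument: set $\delta=\nu'-\nu$, take an extremal global minimizer of $\delta$, and use the hypothesis to show both transfers are again minimizers, contradicting extremality. The only difference is the tie-breaker: the paper takes the lexicographically maximal minimizer (for $r<s$, $\gamma^{r\leftarrow s}$ is lexicographically larger than $\gamma$), whereas you maximize $\|\gamma\|_2^2$ via the identity $\|\gamma^{r\leftarrow s}\|_2^2+\|\gamma^{s\leftarrow r}\|_2^2=2\|\gamma\|_2^2+4$; both choices work equally well.
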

\begin{proof}
Set
\[
\delta_\gamma
\coloneqq
\nu'_\gamma-\nu_\gamma.
\]
Suppose that $\min_\gamma\delta_\gamma<0$.
Since $\Comp(d)\subset \bZ_{\geq 0}^d$ is finite, we choose $\gamma$ lexicographically maximal among the global minimizers of $\delta$.
Since $\delta_\gamma<0$, we have $\nu_\gamma>\nu'_\gamma$, so the assumption gives distinct $r,s\in\supp(\gamma)$ such that
\[
2\delta_\gamma
\ge
\delta_{\gamma^{r\leftarrow s}}
+
\delta_{\gamma^{s\leftarrow r}}.
\]
By the minimality of $\delta_\gamma$, 
we have $\delta_\gamma
=
\delta_{\gamma^{r\leftarrow s}}
=
\delta_{\gamma^{s\leftarrow r}}$.
But one of $\gamma^{r\leftarrow s}$ and ${\gamma^{s\leftarrow r}}$ is lexicographically larger than $\gamma$, contradicting the lexicographic maximality of $\gamma$.
Therefore $\min_\gamma\delta_\gamma\ge0$, and hence $\nu_\gamma\le \nu'_\gamma$ for every $\gamma\in\Comp(d)$.
\end{proof}

\begin{proof}[Proof of \cref{thm:unipotent-mixed-orbit-bound}]
Fix $\bh\in\cC^d$.
For any $\gamma\in\Comp(d)$, abbreviate $\Phi_{T,\gamma,\bh}$ to $\Phi_\gamma$.
Since $N$ is nilpotent, $\Phi_{\gamma}$ is a polynomial.
Set
\[
\mu_\gamma=\deg_{\tot}\Phi_\gamma.
\]
By \cref{lem:unipotent-mixed-parity}, $\mu_\gamma$ is even.
If $\mu_\gamma>b_\gamma\geq0$, then \cref{lem:local-transfer} asserts that there exist distinct $r,s\in\supp(\gamma)$ such that
\[
2\mu_\gamma
-
\mu_{\gamma^{r\leftarrow s}}
-
\mu_{\gamma^{s\leftarrow r}}
\le4.
\]
On the other hand, note that
\[
2b_\gamma-b_{\gamma^{r\leftarrow s}}-b_{\gamma^{s\leftarrow r}} = 2b_\gamma-(b_\gamma-2(\gamma_r-\gamma_s+1))-(b_\gamma-2(\gamma_s-\gamma_r+1)) = 4.
\]
Hence by \cref{lem:transfer-maximum-principle}, $\mu_\gamma\le b_\gamma$ for all $\gamma$.
Substituting $\bt=n\bl$ gives the ray bound.
\end{proof}

\subsection{General two-block case: peripheral polynomial growth}\label{subsec:peripheral-two-block}

The main result of this subsection is the abstract counterpart of \cref{thm:endomorphism-peripheral-growth}.

\begin{theorem}
\label{thm:abstract-turan-dichotomy}
Let $(V,\cC,Q,T)$ be a dynamical Hodge--Lorentz package of degree $d\ge2$.
Fix $h_1,h_2\in\cC$.
Assume that, for every $0\le i\le d$, there are $\lambda_i>0$ and $\mu_i\in\bZ_{\ge0}$ such that
\[
D_i(n)\coloneqq
Q\bigl((T^nh_1)^{[i]},h_2^{[d-i]}\bigr)\asymp\lambda_i^n n^{\mu_i} \qquad (n\to +\infty).
\]
Then for every $1\le i\le d-1$, one has $\lambda_i^2\ge\lambda_{i-1}\lambda_{i+1}$.
Moreover, the following dichotomy holds:
\[
\begin{cases}
\mu_i=0,
&\text{if }\lambda_i^2>\lambda_{i-1}\lambda_{i+1},\\
0\le \Delta\mu_i = 2\mu_i-\mu_{i-1}-\mu_{i+1}\le4,
&\text{if }\lambda_i^2=\lambda_{i-1}\lambda_{i+1}.
\end{cases}
\]
Consequently, on every maximal affine interval $[r,s]$ of $(\log\lambda_i)_i$ (see \cref{def:maximal-affine-interval}),
\[
\mu_i\le2(i-r)(s-i)
\qquad(r\le i\le s).
\]
In particular, $\mu_i\le2i(d-i)$ for every $0\le i\le d$.
\end{theorem}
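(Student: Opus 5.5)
Throughout, $D_i(n)=\phi_{i,n}(n)$, where $\phi_{i,n}(t)=Q\bigl((e^{tN}S^nh_1)^{[i]},h_2^{[d-i]}\bigr)$ is the one-variable unipotent-shift polynomial of \cref{def:unipotent-shift-poly}.

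\emph{Log-concavity and the lower bound $\Delta\mu_i\ge0$.} By \cref{cor:mixed-log-concavity}, $D_i(n)^2\ge D_{i-1}(n)D_{i+1}(n)$ for every $n$. Inserting the asymptotics $D_j(n)\asymp\lambda_j^nn^{\mu_j}$ gives
\[
\lambda_i^{2n}n^{2\mu_i}\gtrsim\lambda_{i-1}^n\lambda_{i+1}^nn^{\mu_{i-1}+\mu_{i+1}}.
\]
Taking $n$-th roots yields $\lambda_i^2\ge\lambda_{i-1}\lambda_{i+1}$. In the equality case the exponential factors cancel, and comparing powers of $n$ gives $\Delta\mu_i\ge0$.

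\emph{Upper Tur\'an estimate.} Apply \cref{thm:turan-upper} with $\gamma=(i,d-i,0,\dots,0)$, $\bh=(h_1,h_2,\dots,h_2)$ and $\bn=(n,0,\dots,0)$. Exactly as in the proof of \cref{thm:two-block-model}, this reduces to the one-variable inequality
\[
\bigl|\phi_{i,n}'(n)^2-\phi_{i,n}(n)\phi_{i,n}''(n)\bigr|\le C\,D_i(n)\sqrt{D_{i-1}(n)D_{i+1}(n)},
\]
with $C$ independent of $n$. The right side is $\asymp\lambda_i^{n}\sqrt{\lambda_{i-1}\lambda_{i+1}}^{\,n}\,n^{\mu_i+(\mu_{i-1}+\mu_{i+1})/2}$.

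\emph{Lower bound for the left side (the main obstacle).} We need a lower bound for the left side of order $\lambda_i^{2n}n^{2\mu_i-2}$ whenever $\mu_i>0$. For this, expand $S^nh_1$ spectrally. Since $S$ is semisimple and commutes with $N$, we can write
\[
S^nh_1=\sum_\alpha \alpha^n u_\alpha,
\]
summing over the eigenvalues $\alpha$ of $S$, with $u_\alpha$ complexified components. Then
\[
\phi_{i,n}(t)=\sum_{\alpha_1,\dots,\alpha_i}\alpha_1^n\cdots\alpha_i^n\,p_{\underline\alpha}(t),
\]
where each $p_{\underline\alpha}$ is a polynomial in $t$ that does not depend on $n$.
\begin{itemize}
\item Group the terms by the value $\beta=\alpha_1\cdots\alpha_i$. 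This gives $\phi_{i,n}(t)=\sum_\beta\beta^nP_\beta(t)$, where the $P_\beta$ are fixed polynomials.
\item Evaluating at $t=n$, and using that the $\beta$ are distinct, the hypothesis $D_i(n)\asymp\lambda_i^nn^{\mu_i}$ forces $\max\deg P_\beta=\mu_i$ over the peripheral $\beta$ with $|\beta|=\lambda_i$, and forces all $|\beta|\le\lambda_i$.
\item The Tur\'an expression is bilinear, $\sum_{\beta,\beta'}(\beta\beta')^n\bigl(P_\beta'P_{\beta'}'-P_\beta P_{\beta'}''\bigr)(n)$. Its peripheral part is a sum of terms $(\beta\beta')^n q_{\beta\beta'}(n)$, and the top-degree part of $q_{\beta\beta'}$ is $(c_\beta c_{\beta'}\mu_i)\,n^{2\mu_i-2}$, where $c_\beta$ is the leading coefficient of $P_\beta$.
\item Hence the peripheral top-order term is $\mu_i\,n^{2\mu_i-2}\bigl(\sum_\beta c_\beta\beta^n\bigr)^2$. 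The inner sum $\sum_\beta c_\beta\beta^n$ is exactly the top-order coefficient of $D_i(n)$, and $D_i(n)\asymp\lambda_i^nn^{\mu_i}$ forces $\bigl|\sum_\beta c_\beta\beta^n\bigr|\gtrsim\lambda_i^n$ for all large $n$.
\end{itemize}
So the left side is $\gtrsim\lambda_i^{2n}n^{2\mu_i-2}$ up to lower-order terms. Those lower-order terms are controlled by the "shifted" expansion: the $n$-dependence enters only through $\beta^n$ and evaluation at $t=n$, so lower-degree and non-peripheral contributions are $O(\lambda_i^{2n}n^{2\mu_i-3})$. The delicate point is exactly this cancellation control together with the nonvanishing of the oscillating factor; I would handle the latter using the lower bound built into $\asymp$.

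\emph{Conclusion.} Comparing the two sides gives
\[
\lambda_i^{2n}n^{2\mu_i-2}\lesssim\lambda_i^n\sqrt{\lambda_{i-1}\lambda_{i+1}}^{\,n}\,n^{\mu_i+(\mu_{i-1}+\mu_{i+1})/2}.
\]
\begin{itemize}
\item If $\lambda_i^2>\lambda_{i-1}\lambda_{i+1}$, the right side is exponentially smaller than $\lambda_i^{2n}$. This contradicts the lower bound unless $\mu_i=0$.
\item If $\lambda_i^2=\lambda_{i-1}\lambda_{i+1}$ and $\mu_i>0$, comparing powers of $n$ gives $\Delta\mu_i\le4$. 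If $\mu_i=0$, then $\Delta\mu_i=-\mu_{i-1}-\mu_{i+1}\le0$; together with $\Delta\mu_i\ge0$ this gives $\Delta\mu_i=0\le4$.
\end{itemize}

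\emph{Affine-interval bound.} Let $[r,s]$ be a maximal affine interval of $(\log\lambda_i)_i$. At each endpoint, either the endpoint is $0$ or $d$, where $\mu=0$ because $D_0$ and $D_d$ are constant up to the $\chi(T)^n$ factor, or the log-concavity of $(\lambda_i)_i$ is strict there by maximality, so $\mu=0$ by the dichotomy. Inside the interval we have $\Delta\mu_i\le4=\Delta b_i$ for $b_i=2(i-r)(s-i)$, and $b_r=b_s=0$. \Cref{lem:discrete-comparison} then gives $\mu_i\le b_i$ on $[r,s]$. Since every $i$ lies in some maximal affine interval $[r,s]\subseteq[0,d]$, we get $\mu_i\le2(i-r)(s-i)\le2i(d-i)$.
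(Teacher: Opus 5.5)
Your proposal is correct and follows the paper's proof essentially step for step: log-concavity, the one-variable reduction of \cref{thm:turan-upper}, the spectral lower bound of \cref{lem:turan-lower} (your pairwise $(\beta,\beta')$ expansion is an unpacked version of the paper's computation with a single oscillating factor $a_n$), and \cref{lem:discrete-comparison} on maximal affine intervals. The one step you assert without proof is that every $|\beta|\le\lambda_i$ and every peripheral $P_\beta$ has degree at most $\mu_i$. This needs the upper half of $\asymp$ together with the fact that $\sum_j c_jz_j^n\ne o(1)$ for distinct unimodular $z_j$ and nonzero $c_j$ (the paper uses Ces\`aro orthogonality), and you should also note that $\sum_\beta c_\beta\beta^n$ is real by conjugation symmetry, so that its square, and not merely its modulus, is $\gtrsim\lambda_i^{2n}$.
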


As in \cref{lem:multi-JC}, write the multiplicative Jordan--Chevalley decomposition of $T$ by
\[
T=Se^N=e^NS.
\]
For $0\le i\le d$ and $n\in\bZ_{\geq0}$, define the one-variable unipotent-shift polynomial at $n$ by
\[
\phi_{i,n}(t)
\coloneqq
Q\bigl((e^{tN}S^nh_1)^{[i]},h_2^{[d-i]}\bigr)
=
Q\bigl((e^{(t-n)N}T^nh_1)^{[i]},h_2^{[d-i]}\bigr) \in \bR[t].
\]
Note that $\phi_{i,n}(n)=D_i(n)>0$ for every $n\in\bZ_{\geq0}$ by cone positivity.

\begin{lemma}
\label{lem:turan-lower}
For each $0\le i\le d$, if
\[
\phi_{i,n}(n)=D_i(n) \asymp \lambda_i^n n^{\mu_i}
\]
for some $\lambda_i>0$ and $\mu_i\in\bZ_{\ge0}$, then
\begin{equation}\label{eq:shifted-log-turan}
\phi_{i,n}'(n)^2-\phi_{i,n}(n)\phi_{i,n}''(n) = 
{n^{-2}\phi_{i,n}(n)^2}
\bigl(\mu_i + O(n^{-1})\bigr).
\end{equation}
In particular, if $\mu_i>0$, then there exists $C_i>0$ such that
\begin{equation}\label{eq:uniform-Turan-lower-bound}
\phi_{i,n}'(n)^2-\phi_{i,n}(n)\phi_{i,n}''(n)
\ge
C_i\lambda_i^{2n}n^{2\mu_i-2}
\end{equation}
for all sufficiently large $n$.
\end{lemma}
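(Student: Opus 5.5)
The plan is to expand $\phi_{i,n}$ along the eigenvalues of the semisimple part $S$. The point is that the dependence on $n$ then separates into exponentials $\beta^n$, with polynomial coefficients in $t$ that do not depend on $n$. After that, the Tur\'an expression at $t=n$ is an explicit computation in the two top coefficients, and the growth hypothesis $D_i(n)\asymp\lambda_i^nn^{\mu_i}$ pins down the degree and a lower bound on the leading coefficient.

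\emph{Step 1 (exponential--polynomial expansion).} Since $S$ is semisimple and commutes with $N$, over $\bC$ we write $h_1=\sum_\alpha w_\alpha$ with $Sw_\alpha=\alpha w_\alpha$. Then $S^nh_1=\sum_\alpha\alpha^nw_\alpha$. By multilinearity,
\[
\phi_{i,n}(t)=\sum_{\beta}\beta^n P_\beta(t),
\]
where $\beta$ runs over products of $i$ eigenvalues of $S$ and each $P_\beta\in\bC[t]$ is independent of $n$. The same holds for all $t$-derivatives, with $P_\beta$ replaced by $P_\beta^{(k)}$.

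Let $\lambda=\lambda_i$ and let $m$ be the largest degree of $P_\beta$ over $|\beta|=\lambda$. We write $P_\beta(t)=c_\beta t^m+d_\beta t^{m-1}+O(t^{m-2})$ and set
\[
g(n)=\sum_{|\beta|=\lambda}(\beta/\lambda)^nc_\beta,\qquad h(n)=\sum_{|\beta|=\lambda}(\beta/\lambda)^nd_\beta .
\]
Both are bounded sequences, and $g\not\equiv0$ as a formal exponential sum. The terms with $|\beta|<\lambda$ contribute $O(\lambda'^n n^{K})$ with $\lambda'<\lambda$, hence are exponentially negligible. (A priori some $|\beta|$ could exceed $\lambda$. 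If so, the same argument as in Step 2, applied to the largest modulus present, contradicts the upper bound $D_i(n)=O(\lambda^nn^{\mu_i})$.)

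\emph{Step 2 (identifying $m=\mu_i$ and $g\ge c>0$).} This is the step I expect to be the main obstacle. The key fact is standard: a nonzero exponential sum $\sum c_\beta\theta_\beta^n$ with distinct unimodular $\theta_\beta$ does not tend to $0$. Indeed, its Ces\`aro mean of $|\cdot|^2$ equals $\sum|c_\beta|^2>0$.

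Since $D_i(n)=\lambda^n\bigl(g(n)n^m+O(n^{m-1})\bigr)$, the upper bound in $D_i\asymp\lambda^nn^{\mu_i}$ forces $m\le\mu_i$. The lower bound forces $m\ge\mu_i$, since $g$ is bounded. Hence $m=\mu_i$.

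Because $D_i(n)$ is real and positive, the lower bound then gives $\operatorname{Re}g(n)\ge c-O(n^{-1})$. Replacing $g,h$ by their real parts (the imaginary parts cancel in the real quantities), we obtain $g(n)\ge c/2$ for $n\gg0$.

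\emph{Step 3 (the computation).} Set $\mu=\mu_i$ and divide by $\lambda^n$. At $t=n$ we have
\begin{align*}
\phi_{i,n}(n)\lambda^{-n}&=gn^\mu+hn^{\mu-1}+O(n^{\mu-2}),\\
\phi_{i,n}'(n)\lambda^{-n}&=\mu gn^{\mu-1}+(\mu-1)hn^{\mu-2}+O(n^{\mu-3}),\\
\phi_{i,n}''(n)\lambda^{-n}&=\mu(\mu-1)gn^{\mu-2}+(\mu-1)(\mu-2)hn^{\mu-3}+O(n^{\mu-4}).
\end{align*}
Expanding,
\[
\bigl(\phi'^2-\phi\phi''\bigr)\lambda^{-2n}=\mu g^2n^{2\mu-2}+2(\mu-1)gh\,n^{2\mu-3}+O(n^{2\mu-4}),
\]
while $n^{-2}\phi_{i,n}(n)^2\lambda^{-2n}=g^2n^{2\mu-2}+O(n^{2\mu-3})$.

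Since $g\ge c/2$ and $g,h$ are bounded, dividing the two expressions gives \eqref{eq:shifted-log-turan}. This also covers $\mu=0$, where the left side is $O(n^{-3})\lambda^{2n}$.

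Finally, if $\mu_i>0$, then \eqref{eq:shifted-log-turan} together with $\phi_{i,n}(n)=D_i(n)\gtrsim\lambda_i^nn^{\mu_i}$ yields \eqref{eq:uniform-Turan-lower-bound} for large $n$.
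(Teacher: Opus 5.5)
Your proposal is correct and follows the paper's own proof: expand $\phi_{i,n}$ over eigenvalues of $S$ with $n$-independent polynomial coefficients, use Ces\`aro orthogonality to show that the peripheral leading sum does not tend to $0$, identify the top modulus and degree with $\lambda_i$ and $\mu_i$, and compare the leading terms of $\phi_{i,n}'(n)^2-\phi_{i,n}(n)\phi_{i,n}''(n)$ and $n^{-2}\phi_{i,n}(n)^2$. The differences are cosmetic. You track the subleading coefficient $h$, which is harmless but unnecessary since an $O(n^{-1})$ relative error suffices. You also use positivity of $D_i$ to get $g\ge c/2$, whereas the paper notes that $a_n$ is real by conjugation symmetry and absorbs the error via $D_i(n)\gtrsim\lambda_i^n n^{\mu_i}$.
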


\begin{proof}
Fix $0\le i\le d$.
Note that $S$ is diagonalizable after complexification.
We can decompose $h_1$ as a sum of eigenvectors of $S$.
Since $N$ commutes with $S$, multilinearity of $Q$ gives the following expression
\begin{equation}\label{eq:phiint-expansion}
\phi_{i,n}(t) = \sum_{\lambda\in\Lambda_i}\lambda^nP_\lambda(t),
\end{equation}
where $\Lambda_i\subset\bC^\times$ is a finite set of products of eigenvalues of $S$ (or $T$) and the nonzero polynomials $P_\lambda\in\bC[t]$ are independent of $n$.
Denote the leading coefficient of each $P_\lambda$ by $c_{\lambda}\in\bC^*$.
Set
\[
\rho\coloneqq \max_{\lambda\in\Lambda_i}|\lambda|,
\qquad
k\coloneqq \max_{|\lambda|=\rho}\deg P_\lambda,
\]
and define
\[
a_n
\coloneqq
\sum_{\substack{|\lambda|=\rho\\ \deg P_\lambda=k}}
c_\lambda
\left(\frac{\lambda}{\rho}\right)^n.
\]
Since $\overline{c_\lambda}=c_{\overline{\lambda}}$ and the set $\{\lambda\in\Lambda_i:|\lambda|=\rho, \, \deg P_\lambda=k\}$ is stable under complex conjugation, $a_n$ is real.
Ces\'aro orthogonality then gives
\[
\lim_{M\to+\infty}
\frac{1}{M}\sum_{n=0}^{M-1}|a_n|^2
=\sum_{\substack{|\lambda|=\rho\\[2pt] \deg P_\lambda=k}}
|c_\lambda|^2
>0.
\]
In particular, $a_n$ does not converge to zero, i.e., $a_n\ne o(1)$.

Grouping the terms in the spectral--polynomial expansion \eqref{eq:phiint-expansion} of $\phi_{i,n}(t)$ according to their modulus and degree, and using the definitions of $\rho$, $k$, and $a_n$, we obtain
\begin{equation*}\label{eq:phiinn-asymp}
\phi_{i,n}(n) = \rho^n n^k\bigl(a_n+O(n^{-1})\bigr).
\end{equation*}
Combining this with $\phi_{i,n}(n)\asymp\lambda_i^n n^{\mu_i}$ gives
\begin{equation*}\label{eq:an-asymp}
a_n+O(n^{-1}) = \frac{\phi_{i,n}(n)}{\rho^n n^k}
\asymp \left(\frac{\lambda_i}{\rho}\right)^n n^{\mu_i-k}.
\end{equation*}
It thus follows from the boundedness of $a_n$ (by definition) and $a_n\neq o(1)$ that
\begin{equation}\label{eq:exact-lambda-mu}
\rho=\lambda_i \quad\text{and}\quad k=\mu_i.
\end{equation}

Differentiating the polynomial $\phi_{i,n}(t)$ at $n$ and using \eqref{eq:exact-lambda-mu} give
\begin{align*}
\phi'_{i,n}(n)&=\lambda_i^n n^{\mu_i-1}\bigl(\mu_ia_n+O(n^{-1})\bigr),\\
\phi''_{i,n}(n)&=\lambda_i^n n^{\mu_i-2}\bigl(\mu_i(\mu_i-1)a_n+O(n^{-1})\bigr).
\end{align*}
By a direct computation, we get
\[
\phi_{i,n}'(n)^2-\phi_{i,n}(n)\phi_{i,n}''(n) = \lambda_i^{2n}n^{2\mu_i-2}\bigl(\mu_ia_n^2+O(n^{-1})\bigr),
\]
while the boundedness of $a_n$ yields
\[
\mu_i n^{-2}\phi_{i,n}(n)^2 = \mu_i \lambda_i^{2n} n^{2\mu_i-2} \bigl(a_n+O(n^{-1})\bigr)^2 = \lambda_i^{2n} n^{2\mu_i-2} \bigl(\mu_i a_n^2+O(n^{-1})\bigr).
\]
Taking the difference of the above two displayed equations gives
\[
\phi_{i,n}'(n)^2-\phi_{i,n}(n)\phi_{i,n}''(n) - \mu_i n^{-2}\phi_{i,n}(n)^2 = O\bigl(\lambda_i^{2n} n^{2\mu_i-3}\bigr).
\]
Since $\phi_{i,n}(n)\asymp\lambda_i^n n^{\mu_i}$, one has $\lambda_i^{2n} n^{2\mu_i-3} \asymp n^{-3}\phi_{i,n}(n)^2$.
The asserted \eqref{eq:shifted-log-turan} thus follows.

If $\mu_i>0$, then as $\phi_{i,n}(n)\asymp \lambda_i^{n}n^{\mu_i}$, the right-hand side of \eqref{eq:shifted-log-turan} satisfies
\[
 {n^{-2}\phi_{i,n}(n)^2} \bigl(\mu_i + O(n^{-1})\bigr)\asymp \lambda_i^{2n}n^{2\mu_i-2}.
\]
Hence there is a constant $C_i>0$ such that \eqref{eq:uniform-Turan-lower-bound} holds for all sufficiently large $n$.
\end{proof}

\begin{proof}[Proof of \cref{thm:abstract-turan-dichotomy}]
Fix $1\le i\le d-1$.
The log-concavity of mixed $Q$-coefficients, \cref{cor:mixed-log-concavity}, gives
\[
D_i(n)^2\ge D_{i-1}(n)D_{i+1}(n).
\]
Since $D_i(n)\asymp\lambda_i^n n^{\mu_i}$ by assumption, comparison first of exponential rates and then, in the equality case, of polynomial exponents gives
\[
\lambda_i^2\ge\lambda_{i-1}\lambda_{i+1},
\qquad
2\mu_i\ge\mu_{i-1}+\mu_{i+1}
\quad\text{if equality holds.}
\]
We then apply \cref{thm:turan-upper} with
\[
\gamma=(i,d-i,0,\ldots,0),
\qquad
\bh=(h_1,h_2,h_2,\ldots,h_2),
\qquad
\bn=(n,0,\ldots,0).
\]
Diagonal $N$-invariance \cref{lem:stabQ} gives
\begin{align*}
\phi_{\gamma,\bn}(t_1,t_2,0,\ldots,0)&=Q\bigl((e^{t_1N}S^nh_1)^{[i]},(e^{t_2N}h_2)^{[d-i]}\bigr)\\
&=Q\bigl((e^{(t_1-t_2)N}S^nh_1)^{[i]},h_2^{[d-i]}\bigr)=
\phi_{i,n}(t_1-t_2),\\
\phi_{\gamma^{1\leftarrow 2},\bn}(t_1,t_2,0,\ldots,0)&= \phi_{i+1,n}(t_1-t_2),\\
\phi_{\gamma^{2\leftarrow 1},\bn}(t_1,t_2,0,\ldots,0)&= \phi_{i-1,n}(t_1-t_2).
\end{align*}
Now by evaluating the corresponding \eqref{eq:turan-upper} at $(t_1,t_2,0,\dots,0)=(n,0,\dots,0)$, we get
\begin{equation}\label{eq:shifted-turan-upper}
\left|
\phi_{i,n}'(n)^2-\phi_{i,n}(n)\phi_{i,n}''(n)
\right|
\lesssim
D_i(n)\sqrt{D_{i-1}(n)D_{i+1}(n)}.
\end{equation}

First consider the case when $\lambda_i^2>\lambda_{i-1}\lambda_{i+1}$.
Suppose the contrary that $\mu_i>0$.
By assumption, $D_i(n)\asymp\lambda_i^n n^{\mu_i}$, combining \eqref{eq:shifted-turan-upper} with \eqref{eq:uniform-Turan-lower-bound} in \cref{lem:turan-lower} yields
\begin{equation}\label{eq:lambda-mu-comparison}
\lambda_i^{2n}n^{2\mu_i-2}
\lesssim
\lambda_i^n(\lambda_{i-1}\lambda_{i+1})^{n/2}
n^{\mu_i+(\mu_{i-1}+\mu_{i+1})/2}.
\end{equation}
Strict log-concavity makes the left-hand side exponentially larger, a contradiction, so $\mu_i=0$.

Then consider the case when $\lambda_i^2=\lambda_{i-1}\lambda_{i+1}$.
If $\mu_i=0$, then concavity of $\mu_i$ forces $\mu_{i-1}=\mu_{i+1}=0$ and hence $\Delta \mu_i=0$.
We may therefore assume that $\mu_i>0$.
Now \eqref{eq:lambda-mu-comparison} still holds by \eqref{eq:uniform-Turan-lower-bound}.
Cancelling out the exponential parts and comparing powers of $n$ give $\Delta \mu_i\le4$.
Together with concavity, this proves the dichotomy.

Finally, let $[r,s]\subset [0,d]$ be a maximal affine interval of $(\log\lambda_i)_i$.
Because $D_0(n)=Q(h_2^{[d]})$ is constant and $D_d(n)=\chi(T)^nQ(h_1^{[d]})$ is purely exponential, $\mu_0=\mu_d=0$.
By the maximality of $[r,s]$, we have either $r=0$, or $r>0$ and $\lambda_r^2>\lambda_{r-1}\lambda_{r+1}$, so $\mu_r=0$ in either case. Similarly, $\mu_s=0$.
Applying the discrete maximum principle \cref{lem:discrete-comparison}
to $(\mu_i)_i$ and $(2(i-r)(s-i))_i$ gives $\mu_i\le 2(i-r)(s-i)$.
The inequality $2(i-r)(s-i)\le 2i(d-i)$ yields the global bound.
\end{proof}

\section{Applications to algebraic dynamics}\label{sec:applications}


\subsection{Degree--norm comparison}

Degree--norm comparison is standard in characteristic zero; see, e.g., \cite[Lemme~4]{DS05a}.
We give a cone-theoretic proof for endomorphisms in arbitrary characteristic, using the positivity properties of the pliant cone established in \cite{FL17a}.
See also \cite[Theorem~2]{Dang20} for dominant rational self-maps.

\begin{proposition}
\label{prop:degree-norm-comparison}
Let $X$ be a normal projective variety of dimension $d$ over $\bk$, let $f\colon X\to X$ be a surjective endomorphism, and let $H$ be an ample divisor on $X$.
Then for every $0\le i\le d$, the limit
\[
\lambda_i(f) \coloneqq \lim_{n\to +\infty} \bigl(\deg_i(f^n)\bigr)^{1/n}
\]
exists and equals the spectral radius of $f^*|_{\N^i(X)_\bR}$.

Let $\mu_i+1$ be the largest size of Jordan blocks of $f^*|_{\N^i(X)_\bC}$ associated with eigenvalues of modulus $\lambda_i(f)$.
Then for any norm on $\N^i(X)_\bR$ and its induced operator norm,
\begin{equation}\label{eq:degree-norm-comparison}
\deg_i(f^n)
\asymp
\left\|(f^n)^*|_{\N^i(X)_\bR}\right\|
\asymp
\lambda_i(f)^n n^{\mu_i}
\qquad(n\to+\infty).
\end{equation}
Moreover, $\lambda_i(f)$ is itself an eigenvalue of $f^*|_{\N^i(X)_\bR}$, and it has a Jordan block of size $\mu_i+1$.
\end{proposition}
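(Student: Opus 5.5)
The plan is to compare $\deg_i(f^n)$ with the operator norm by means of a cone argument on $\N^i(X)_\bR$ and its dual. I will use the pliant cone $\PL^i(X)\subset\N^i(X)_\bR$ of \cite{FL17a}, which is a salient closed convex cone with nonempty interior, is preserved by pullback under $f$, and contains $H^i$ in its interior. I will pair it with the pseudoeffective cone $\overline{\Eff}_i(X)\subset\N_i(X)_\bR$, which is also salient, full-dimensional, and contains $H^{d-i}\cap[X]$ in its interior. Pliant classes pair nonnegatively with pseudoeffective cycles. Granting these positivity properties from \cite{FL17a}, the argument is linear algebra.

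\textbf{Upper bound.} The quantity $\deg_i(f^n)=\langle (f^n)^*H^i, H^{d-i}\rangle$ is a fixed bilinear pairing applied to $(f^n)^*$, so $\deg_i(f^n)\lesssim\|(f^n)^*\|$.

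\textbf{Lower bound.} Since $\PL^i$ has nonempty interior and $H^i$ is interior, every $\alpha\in\N^i$ with $\|\alpha\|\le1$ can be written as $\alpha=c(H^i-\beta)$ with $c$ fixed and $\beta$ ranging over a bounded set in $\PL^i$. Indeed, $H^i\pm\epsilon\alpha\in\PL^i$. It therefore suffices to bound $\|(f^n)^*\beta\|$ for pliant $\beta\le C H^i$ in the cone order. Since $(f^n)^*$ preserves $\PL^i$, this gives $(f^n)^*\beta\le C(f^n)^*H^i$ in the cone order. Pairing against an interior class $\omega$ of the salient dual cone $\overline{\Eff}_i$ defines a norm-equivalent functional on $\PL^i$: for pliant $\gamma$ one has $\|\gamma\|\asymp\langle\gamma,\omega\rangle$, because the cone is salient and $\omega$ is interior to the dual. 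Taking $\omega=H^{d-i}$, we get $\|(f^n)^*\alpha\|\lesssim\langle (f^n)^*H^i,H^{d-i}\rangle=\deg_i(f^n)$. Hence $\deg_i(f^n)\asymp\|(f^n)^*\|$.

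\textbf{Norm asymptotics.} By the Jordan normal form, $\|A^n\|\asymp\rho^n n^{m}$, where $\rho$ is the spectral radius of $A$ and $m+1$ is the largest Jordan block among eigenvalues of modulus $\rho$. This gives \eqref{eq:degree-norm-comparison}, and taking $n$-th roots shows that the limit exists and equals the spectral radius.

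\textbf{Perron--Frobenius part.} The statement that $\lambda_i(f)$ is itself an eigenvalue carrying a Jordan block of size $\mu_i+1$ is a Birkhoff/Vandergraft-type Perron--Frobenius statement for linear maps preserving a proper cone with nonempty interior. I plan to prove it directly. Consider
\[
L\coloneqq\lim_k \frac{(f^{n_k})^*}{\lambda^{n_k}n_k^{\mu}}
\]
along a subsequence, which is a nonzero limit by the norm asymptotics. Then:
\begin{enumerate}
\item $L$ maps $\PL^i$ into $\PL^i$.
\item $L\neq0$ on the interior, since the cone spans.
\item $f^*L=\lambda L$, because the subdominant and lower-order Jordan terms vanish in the limit and $(n+1)^\mu/n^\mu\to1$. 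More carefully, $f^*L=\lim\,(f^{n_k+1})^*/(\lambda^{n_k}n_k^\mu)=\lambda L$.
\end{enumerate}
This produces a real eigenvector with eigenvalue $\lambda$ in the cone. To obtain the full block size, I will take $L$ as the leading coefficient: $L=\lim_n \lambda^{-n}n^{-\mu}(f^n)^*$ in Cesàro-averaged form. Then $L=\tfrac1{\mu!}\,\Pi\,(f^*/\lambda-1)^{\mu}$, where $\Pi$ is the projection onto the generalized eigenspace for eigenvalue $\lambda$, in the real-positive part. Its nonvanishing forces a block of size $\mu+1$ at $\lambda$.

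\textbf{Main obstacle.} The hardest step is this nonvanishing: the dominant peripheral contributions at $\lambda e^{i\theta}$ could in principle cancel in the limit. I will handle it with the Cesàro average, as in the proof of \cref{lem:turan-lower}. Averaging $\lambda^{-n}n^{-\mu}(f^n)^*$ over $n\le M$ kills the contributions from $\theta\ne0$. The average stays in the cone and remains bounded below on interior classes, because $\deg_i(f^n)\asymp\lambda^n n^\mu$ is a two-sided bound that holds for every $n$. Hence the $\theta=0$ part is nonzero, which gives the eigenvalue $\lambda_i(f)$ with a Jordan block of size $\mu_i+1$.
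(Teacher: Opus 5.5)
Your degree--norm comparison is the paper's argument with different bookkeeping. The paper writes $H^{d-i}\cap[X]=\sum_j\alpha_j$ and $H^i=\sum_j\beta_j$ with bases in $\overline{\Eff}_i(X)$ and $\PL^i(X)$, and uses the norm $\sum_j|\beta\cdot\alpha_j|$, which equals $\beta\cdot H^{d-i}$ on nef classes. You instead use $H^i\pm\epsilon\alpha\in\PL^i(X)$ and the fact that an interior point of $\overline{\Eff}_i(X)\subseteq(\PL^i(X))^\vee$ gives a functional comparable to the norm on $\PL^i(X)$. The inputs from \cite{FL17a} are the same.

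The genuine difference is the final Perron--Frobenius assertion. The paper checks that $\PL^i(X)$ is closed, convex, full-dimensional, salient and $f^*$-invariant, and then cites \cite[Theorem~3.1]{Vandergraft68}. You prove it directly, and your Ces\`aro argument is correct. The averages $\frac1M\sum_{n=1}^M\lambda^{-n}n^{-\mu}(f^n)^*$ converge to $L=\frac1{\mu!}\Pi(f^*/\lambda-1)^{\mu}$, because the contributions of peripheral eigenvalues $\lambda\zeta$ with $\zeta\ne1$ average to zero. Each average preserves $\PL^i(X)$. Pairing $LH^i$ with $H^{d-i}$ gives the limit of Ces\`aro means of $\deg_i(f^n)/(\lambda^n n^{\mu})$, which is bounded below by the two-sided comparison. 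Hence $L\ne0$, so $\lambda$ is an eigenvalue with a Jordan block of size $\mu+1$. Since $f^*L=\lambda L$ for this limit, $LH^i$ is even an eigenvector in $\PL^i(X)$. This route is more self-contained than the paper's: it needs nothing beyond what the norm comparison already used, and it is the same Ces\`aro device as in \cref{lem:turan-lower}. It costs only a few more lines than the citation.

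You should, however, delete your steps (1)--(3) with a subsequential limit, because (3) is false in general. One has $f^*L=\lambda\lim_k(f^{n_k+1})^*/(\lambda^{n_k+1}n_k^{\mu})$, and the shifted subsequence need not converge to the same $L$ when some $\lambda\zeta$ with $|\zeta|=1$, $\zeta\ne1$, also carries a block of size $\mu+1$. For instance, the coordinate swap on $\bR^2$ preserves the closed positive quadrant and has $\lambda=1$, $\mu=0$. Its even-$n$ subsequential limit is $L=\id$, but $f^*L$ is the swap, not $L$. This is exactly the cancellation you identify as the main obstacle. The Ces\`aro limit handles both the eigenvector and the block size, so nothing is lost by dropping (1)--(3).
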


\begin{proof}
The cases $i=0,d$ are one-dimensional and immediate.
Fix $0<i<d$ and denote $M\coloneqq \dim_\bR \N^i(X)_\bR$.
We use the elementary fact that an interior point of a full-dimensional
cone is the sum of a basis contained in that cone.
By \cite[Lemma~2.12]{FL17a},
\[
\ell\coloneqq H^{d-i}\cap[X]
\in\operatorname{int}\overline{\Eff}_i(X),
\]
while $h\coloneqq H^i\in\operatorname{int}\PL^i(X)$ by
\cite[Lemma~3.14]{FL17a}.
Hence there are bases
$\{\alpha_j\}_{j=1}^M$ of $\N_i(X)_\bR$ and
$\{\beta_j\}_{j=1}^M$ of $\N^i(X)_\bR$ such that
\[
\alpha_j\in\overline{\Eff}_i(X),\qquad
\beta_j\in\PL^i(X),\qquad
\ell=\sum_j\alpha_j,\qquad h=\sum_j\beta_j.
\]
Define
\[
\|\beta\|_H\coloneqq\sum_{j=1}^M|\beta\cdot\alpha_j|,
\qquad \beta\in \N^i(X)_\bR.
\]
This is a norm on $\N^i(X)_\bR$, and for every nef dual class $\beta$,
\[
\|\beta\|_H=\beta\cdot \ell=\beta\cdot H^{d-i}.
\]

Fix an arbitrary $n\ge0$ and set $g=f^n$. Since
\[
g^*\PL^i(X)\subseteq\PL^i(X)\subseteq\Nef^i(X)
\]
by \cite[Remark~3.6 and Lemma~3.7]{FL17a}, both
$g^*\beta_j$ and $g^*(h-\beta_j)$ are nef. Therefore
\[
\|g^*\beta_j\|_H
=(g^*\beta_j)\cdot \ell
\le (g^*h)\cdot \ell
=\deg_i(g).
\]
For an arbitrary $\beta=\sum_j c_j\beta_j$, equivalence of the coefficient norm with
$\|\cdot\|_H$ gives
\[
\|g^*\beta\|_H
\le \deg_i(g)\sum_j|c_j|
\lesssim \deg_i(g)\|\beta\|_H.
\]
Thus $\bigl\|g^*|_{\N^i(X)_\bR}\bigr\|_H\lesssim \deg_i(g)$. Conversely,
\[
\deg_i(g)=\|g^*h\|_H
\le \bigl\|g^*|_{\N^i(X)_\bR}\bigr\|_H\|h\|_H
=H^d\,\bigl\|g^*|_{\N^i(X)_\bR}\bigr\|_H.
\]
Consequently,
\[
\deg_i(f^n)\asymp
\left\|(f^n)^*|_{\N^i(X)_\bR}\right\|,
\]
for any prescribed norm on $\N^i(X)_\bR$, with constants independent of $n$.

By functoriality, one has $(f^n)^*=(f^*)^n$ on $\N^i(X)_\bR$.
The preceding comparison and Gelfand's formula (see, e.g., \cite[Corollary~5.6.14]{HJ13}) give
\[
\lambda_i(f) \coloneqq \lim_{n\to+\infty}\deg_i(f^n)^{1/n}=\lim_{n\to+\infty}\left\|(f^*)^n|_{\N^i(X)_\bR}\right\|^{1/n}=\rho(f^*|_{\N^i(X)_\bR}),
\]
proving the existence and spectral interpretation of $\lambda_i(f)$.
Jordan normal form further yields
\[
\bigl\|(f^n)^*|_{\N^i(X)_\bR}\bigr\|\asymp \rho(f^*|_{\N^i(X)_\bR})^n n^{\mu_i}
=\lambda_i(f)^n n^{\mu_i}.
\]
Combining the last two comparisons proves \eqref{eq:degree-norm-comparison}.

Finally, by definition, $\PL^i(X)$ is a closed and convex cone in $\N^i(X)_\bR$; it is full-dimensional by \cite[Lemma~3.5]{FL17a}, salient by \cite[Lemma~3.7]{FL17a}, and $f^*$-invariant by \cite[Remark~3.6]{FL17a}.
Vandergraft's generalized Perron--Frobenius theorem \cite[Theorem~3.1]{Vandergraft68} shows that $\lambda_i(f)=\rho(f^*|_{\N^i(X)_\bR})$ is an eigenvalue of $f^*|_{\N^i(X)_\bR}$ and has a Jordan block of size $\mu_i+1$.
We thus complete the proof of \cref{prop:degree-norm-comparison}.
\end{proof}

\subsection{Proofs of the main results}

\begin{proof}[Proof of \cref{thm:endomorphism-peripheral-growth}]


As in \cref{ex:geometric-HL-models}(i), take
\[
V=\N^1(X)_\bR,
\qquad
\cC=\Amp(X),
\qquad
Q(D_1,\ldots,D_d)=D_1\cdots D_d,
\qquad
T=f^*.
\]
Then $(V, \cC, Q, T)$ is a dynamical Hodge--Lorentz package of degree $d\ge2$.
By \cref{prop:degree-norm-comparison},
\[
Q\bigl((T^nH)^{[i]},H^{[d-i]}\bigr)
=
\deg_i(f^n)
\asymp
\lambda_i(f)^n n^{\mu_i}.
\]
Thus the hypotheses of \cref{thm:abstract-turan-dichotomy} are
satisfied, and that theorem gives all the assertions.
\end{proof}


\begin{proof}[Proof of \cref{thm:zero-entropy-mixed}]
Take the same $(V,\cC,Q)$ in the proof of \cref{thm:endomorphism-peripheral-growth} as our Hodge--Lorentz package.
Since $\rho(f^*|_V)=1$ and $f^*$ acts invertibly on the integral N\'eron--Severi lattice modulo torsion, every eigenvalue of $f^*|_V$ is a root of unity by Kronecker's theorem.
Choose $m\ge1$ such that
\[
T\coloneqq (f^m)^*|_V=e^N
\]
is unipotent, where $N=\log T$ is nilpotent.
Then $(V, \cC, Q, T)$ is a unipotent dynamical Hodge--Lorentz package of degree $d\ge2$.

Fix a residue vector $\br=(r_1,\ldots,r_d)\in\{0,\ldots,m-1\}^d$ and set
\[
H_{j,r_j}\coloneqq(f^{r_j})^*H_j,
\qquad
\bH_{\br}\coloneqq(H_{1,r_1},\ldots,H_{d,r_d}).
\]
If $\bn=m\bq+\br$ with $\bq\in\bZ^d$, then
\[
\Phi_{f,\gamma,\bH}(\bn)
=
\Phi_{T,\gamma,\bH_{\br}}(\bq).
\]
Every $H_{j,r_j}$ is ample, so \cref{thm:unipotent-mixed-orbit-bound} shows that the right-hand side is a polynomial in $\bq$ of even total degree at most $b_\gamma$.
Consequently, on the residue class $\br$, the mixed orbit function agrees with the polynomial
\[
\bx
\longmapsto
\Phi_{T,\gamma,\bH_{\br}}
\left(\frac{\bx-\br}{m}\right).
\]
This polynomial has the same total degree.
This proves the quasipolynomial assertion for every $\bn\in\bZ^d$; the identities $e^{q_jN}=T^{q_j}$ for $q_j\in\bZ$ also cover negative coordinates.
The growth estimate follows from the finitely many residue classes, and restriction to $\bn=n\bl$ gives \eqref{eq:intro-zero-entropy-ray}.
\end{proof}


\begin{proof}[Proof of \cref{cor:zero-entropy-degree-growth}]
At zero entropy, $\lambda_i(f)=1$ for every $0\le i\le d$.
Up to replacing $f$ by a positive iterate, which does not change the largest size of Jordan blocks, we assume that $f^*|_{\N^1(X)_\bR}$ is unipotent.
Let $(V, \cC, Q, T)$ be the unipotent dynamical Hodge--Lorentz package of degree $d\ge2$; see \cref{ex:geometric-HL-models}(i).
Then as in the proof of \cref{thm:endomorphism-peripheral-growth}, by \cref{prop:degree-norm-comparison}, \cref{cor:zero-entropy-degree-growth} follows readily from \cref{thm:two-block-model} applied to the above $(V, \cC, Q, T)$.
\end{proof}


\begin{proof}[Proof of \cref{cor:kahler-growth}]
Since the automorphism $g$ has zero entropy, it is well known that all eigenvalues of $g^*$ on $H^\bullet(Y,\bC)$ are roots of unity by the Gromov--Yomdin theorem \cite{Gromov03,Yomdin87} and Dinh's estimate \cite{Dinh05}.
Up to replacing $g$ by a positive iterate, which does not change the largest size of Jordan blocks, we assume that
\[
T\coloneqq g^*|_{H^{1,1}(Y,\bR)}=e^N
\]
is unipotent.
Hence 
\[
\bigl(H^{1,1}(Y,\bR),\cK_Y,Q,T\bigr),
\qquad
Q(\alpha_1,\ldots,\alpha_d)
\coloneqq
\int_Y\alpha_1\wedge\cdots\wedge\alpha_d,
\]
is a unipotent dynamical Hodge--Lorentz package of degree $d\ge2$; see \cref{ex:geometric-HL-models}(ii).

Fix a K\"ahler class $\omega$.
For $0\le p\le d$, set
\[
\deg_p(g^n)
\coloneqq
\int_Y\bigl((g^n)^*\omega\bigr)^p\wedge\omega^{d-p}.
\]
Applying \cref{thm:two-block-model} with $h_1=h_2=\omega$ gives
\[
\deg_p(g^n) \asymp n^{\mu_p}=O\bigl(n^{2p(d-p)}\bigr).
\]
For every $p$, the standard mass--norm comparison for positive cohomology classes gives
\[
\left\|(g^n)^*|_{H^{p,p}(Y,\bC)}\right\|
\lesssim
\deg_p(g^n).
\]
Indeed, $\omega^p$ lies in the interior of the cone generated by classes of smooth closed strongly positive $(p,p)$-forms, pullback preserves this cone, and the same cone-basis argument as in \cref{prop:degree-norm-comparison} applies; see also \cite[Proof of Proposition~5.8]{Dinh05}.
The Hodge-theoretic Cauchy--Schwarz estimate used in the same proof then yields
\[
\left\|(g^n)^*|_{H^{p,q}(Y,\bC)}\right\|^2
\lesssim
\left\|(g^n)^*|_{H^{p,p}(Y,\bC)}\right\|
\left\|(g^n)^*|_{H^{q,q}(Y,\bC)}\right\|
\lesssim
\deg_p(g^n)\deg_q(g^n),
\]
where the implied constants are all independent of $n$;
see also \cite[Proof of Theorem~1.1]{DLOZ22}.
It follows that
\[
\left\|(g^n)^*|_{H^{p,q}(Y,\bC)}\right\|
=O\bigl(n^{p(d-p)+q(d-q)}\bigr).\qedhere
\]
\end{proof}

\appendix

\section{Sharpness on abelian varieties}\label{sec:abelian-sharpness}



This appendix constructs sharp examples on powers of elliptic curves: a single regular-unipotent automorphism simultaneously attains every zero-entropy mixed-orbit bound $b_\gamma$ and every codimension degree-growth bound.
Suitable isogenies realize the affine-interval bounds and the local defect $4$.
Over $\bC$, the Hodge-bidegree bounds are also sharp \cite[Remark~4.1]{DLOZ22}.
By Riemann--Roch, the proof reduces to mixed determinant calculations using Cauchy--Binet \cite[\S0.8.7, p.~28]{HJ13} and Krattenthaler's confluent Vandermonde formula \cite[Theorem~20]{Krattenthaler99}.
See also the characteristic-free polynomial-volume calculation in \cite[Theorem~1.1 and Appendix~A]{Hu-GK-AV}.

\begin{theorem}
\label{thm:sharpness-main}
Let $E$ be an elliptic curve over an algebraically closed field $\bk$, let
$A=E^d$, and let $u\colon A\to A$ be induced by
\[
U=I_d+J_d\in\GL_d(\bZ),
\]
where $J_d$ is the standard (upper-triangular) nilpotent Jordan block.
Then for every tuple $\bH=(H_1,\ldots,H_d)$ of ample divisors on $A$, the following hold.

\begin{enumerate}[label=\textnormal{(\arabic*)},ref=(\arabic*)]
\item \label{thm:abelian-mixed-sharpness} For every $\gamma\in\Comp(d)$,
$\Phi_{u,\gamma,\bH}$ extends to a polynomial satisfying
\begin{equation}\label{eq:theta-mixed-sharp}
\deg_{\tot}\Phi_{u,\gamma,\bH}=b_\gamma\coloneqq d^2-\|\gamma\|_2^2 = 2\sum_{1\le j<k\le d} \gamma_j\gamma_k.
\end{equation}
Moreover, if the entries of
$\bl\in\bZ^d$ are pairwise distinct on $\supp(\gamma)$, then
\begin{equation}\label{eq:theta-sharp-ray}
\Phi_{u,\gamma,\bH}(n\bl)\asymp n^{b_\gamma}.
\end{equation}
In particular, for every $0\le i\le d$ and ample divisor $H$ on $A$,
\begin{equation}\label{eq:degree-growth-sharp}
\deg_{i}(u^n)\coloneqq (u^n)^*H^i\cdot H^{d-i} \asymp n^{2i(d-i)}.
\end{equation}

\item \label{thm:abelian-affine-interval-sharpness} 
For every $0\le r<s\le d$, there is an isogeny $f_{r,s}$ of $E^d$ such that $[r,s]$ is a maximal affine interval of $( \log\lambda_i(f_{r,s}))_i$ and
\begin{equation}\label{eq:positive-affine-interval}
\mu_i(f_{r,s})=2(i-r)(s-i)
\qquad(r\le i\le s).
\end{equation}

\item \label{thm:abelian-hodge-sharpness} Assume that $\bk=\bC$.
For every $0\le p,q\le d$, one has
\begin{equation}\label{eq:hodge-norm-growth-sharp}
\left\|(u^n)^*|_{H^{p,q}(A,\bC)}\right\|
\asymp
n^{p(d-p)+q(d-q)}.
\end{equation}
Equivalently, the largest Jordan block of $u^*|_{H^{p,q}(A,\bC)}$ has size
\[
p(d-p)+q(d-q)+1.
\]
\end{enumerate}

Thus the mixed, degree, affine-interval, and Hodge-bidegree bounds, as well as the discrete Laplacian upper bound $4$, in \cref{thm:endomorphism-peripheral-growth,thm:zero-entropy-mixed,cor:zero-entropy-degree-growth,cor:kahler-growth}, are sharp.
\end{theorem}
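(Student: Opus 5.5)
Since the upper bounds are already established (\cref{thm:unipotent-mixed-orbit-bound}, \cref{thm:abstract-turan-dichotomy}, \cref{cor:kahler-growth}), only the matching lower bounds remain to be proved. We compute on $A=E^d$ with line bundles modelled by symmetric forms.

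\emph{Reduction for part \ref{thm:abelian-mixed-sharpness}.} Identify $\N^1(A)_\bR$ with a space of symmetric $d\times d$ matrices (Hermitian over $\End(E)\otimes\bR$ in general). The class $H_j$ corresponds to a positive definite matrix $M_j$. Then $(u^n)^*H_j$ corresponds to $(U^n)^{\T}M_jU^n$, and Riemann--Roch expresses the top self-intersection as $d!\det$ times a fixed positive constant. The mixed coefficient $Q_\gamma$ is therefore, up to a positive constant, the mixed discriminant
\[
D\bigl(((U^{n_1})^{\T}M_1U^{n_1})^{[\gamma_1]},\ldots\bigr).
\]
By monotonicity of mixed discriminants on the PSD cone, $M_j\ge cI$ implies
\[
\Phi_{u,\gamma,\bH}\ \ge\ c^d\cdot\bigl(\text{the same expression with all }M_j=I\bigr),
\]
and $\Phi_{u,\gamma,\bH}\le C^d\cdot(\cdots)$ holds in the same way. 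So it suffices to treat $M_j=I$, that is, $G_j=(U^{n_j})^{\T}U^{n_j}$. The upper bound $b_\gamma$ is already known. Hence it suffices to show that, along $n\bl$ with pairwise distinct $\ell_j$ on the support, the value is $\gtrsim n^{b_\gamma}$. This gives both \eqref{eq:theta-sharp-ray} and, via the top homogeneous part, \eqref{eq:theta-mixed-sharp}.

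\emph{Lower bound via Cauchy--Binet.} Write $G_j=B_j^{\T}B_j$ with $B_j=U^{n_j}$. Polarizing $\det(\sum x_jB_j^{\T}B_j)=\det(W^{\T}W)$, where $W$ stacks $\sqrt{x_j}B_j$ into a $d^2\times d$ matrix, Cauchy--Binet shows that $Q_\gamma$ equals a positive combinatorial constant times
\[
\sum_{S}\det(W_S)^2,
\]
summed over row selections taking $\gamma_j$ rows from block $j$. Each term is nonnegative, so it suffices to find one selection with $\det(W_S)\asymp n^{b_\gamma/2}$. The rows of $U^{m}=\sum_k\binom{m}{k}J^k$ are explicit: row $a$ has entries $\binom{m}{b-a}$. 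We choose from block $j$ the last $\gamma_j$ rows, whose entries in column $b$ are $\binom{n_j}{b-d+\gamma_j-1+\dots}$, i.e.\ the binomials $\binom{n_j}{k}$ for consecutive $k$ along a shifted pattern.

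After column operations, this determinant becomes a confluent Vandermonde determinant in the variables $n_j$, with multiplicity $\gamma_j$. Krattenthaler's formula evaluates it as a nonzero constant times $\prod_{j<k}(n_k-n_j)^{\gamma_j\gamma_k}$, possibly up to lower-order corrections. The leading degree is then $\sum_{j<k}\gamma_j\gamma_k=b_\gamma/2$, and it is nonvanishing along $n\bl$ exactly when the $\ell_j$ are distinct. \textbf{This is the main obstacle}: choosing the right rows so that the determinant is genuinely (confluent) Vandermonde. Using the top rows instead, with $\binom{n}{k}$ for $k$ up to $d-1$, might be cleaner; I would test small $d$ to fix the choice. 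The degree statement \eqref{eq:degree-growth-sharp} is the special case $\gamma=(i,d-i,0,\dots)$ and $\bl=(1,0,\dots)$.

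\emph{Parts \ref{thm:abelian-affine-interval-sharpness} and \ref{thm:abelian-hodge-sharpness}.} For \ref{thm:abelian-affine-interval-sharpness}, take $f_{r,s}$ induced by the block matrix $\operatorname{diag}(2 I_r,\,U_{s-r},\,I_{d-s})$, where $U_{s-r}=I+J$ acts on $E^{s-r}$. Eigenvalues of $f^*$ on $\N^1$ combine $4$, $2$ and $1$ on the respective factors. The degree $\deg_i$ then factorizes: one chooses $\min(i,r)$ directions from the first block, giving $\lambda_i=4^{\min(i,r)}$ up to the pattern that makes $\log\lambda_i$ affine precisely on $[r,s]$ with breaks at $r$ and $s$. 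I would check the break by computing $\lambda_i$ explicitly, and adjust the weights (for instance using multiplication by $2$ on both outer blocks with different exponents) if needed. For $r\le i\le s$, $\deg_i$ is dominated by choosing all $r$ directions from the first block and $i-r$ from the unipotent block. Part \ref{thm:abelian-mixed-sharpness} applied on $E^{s-r}$ then yields $n^{2(i-r)(s-i)}$.

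For \ref{thm:abelian-hodge-sharpness}, use $H^{p,q}(A)=\wedge^pV^\vee\otimes\wedge^q\overline{V}^\vee$, with $u^*$ acting by $\wedge^pU\otimes\wedge^q\overline U$. The largest Jordan block of $\wedge^p(I+J_d)$ has size $p(d-p)+1$, which can be read off from the $\mathfrak{sl}_2$ weight of $\wedge^p$ of the standard representation. Tensor products of two such blocks have largest block of size equal to the sum of the two exponents plus one, giving the claimed exponent; alternatively, cite \cite[Remark~4.1]{DLOZ22}.
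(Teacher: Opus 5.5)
Your framework for part (1) is the paper's: Riemann--Roch turns $\frac{1}{d!}\bigl(\sum_j x_j\Theta_{n_j}\bigr)^d$ into $\det\bigl(\sum_j x_j(U^{n_j})^{\T}U^{n_j}\bigr)$, Cauchy--Binet gives a sum of squares of minors, one minor of degree $b_\gamma/2$ suffices, Krattenthaler evaluates it, and comparison with $\Theta$ handles general $\bH$. Your part (3) also matches the paper; the $\mathfrak{sl}_2$/Clebsch--Gordan count is a fine substitute for the Littlewood citation.

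\textbf{Part (1): the row choice.} The step you flag as the main obstacle is resolved wrongly as written. The last $\gamma_j$ rows of the upper unitriangular $U^{m}$ are supported in the last $\gamma_j$ columns. So whenever $\gamma$ has two nonzero parts, all $d$ selected rows lie in a coordinate subspace of dimension $\max_j\gamma_j<d$, and the minor vanishes identically. For $d=2$, $\gamma=(1,1)$, both rows equal $(0,1)$.

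The selection that works is the one you only float, $\Gamma_j=\{1,\ldots,\gamma_j\}$. The missing observation is why it works. The $(a,c)$-entry of $U^t$ is $\binom{t}{c-a}$, and its leading term $t^{c-a}/(c-a)!$ is the $(a-1)$-st $t$-derivative of $t^{c-1}/(c-1)!$. Every nonzero term of the Leibniz expansion then has degree exactly $\sum_c c-\sum_j\sum_{a\le\gamma_j}a=b_\gamma/2$. The degree-$b_\gamma/2$ part is, up to column scaling, a genuine confluent Vandermonde determinant with node $t_j$ of multiplicity $\gamma_j$, hence equals $C_\gamma\prod_{j<k}(t_k-t_j)^{\gamma_j\gamma_k}$ with $C_\gamma\ne0$. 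It is this top homogeneous part, not the minor after column operations, that is confluent Vandermonde. Alternatively, Pascal's rule makes row $a$ the $(a-1)$-st forward difference of row $1$. After row operations the exact minor is then a nonzero constant times the ordinary Vandermonde in the $d$ nodes $t_j+a$, $0\le a<\gamma_j$.

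\textbf{Part (2): the construction fails.} For $\operatorname{diag}(2I_r,U_{s-r},I_{d-s})$ the squared eigenvalue moduli are $4$ ($r$ times) and $1$ ($d-r$ times). So $\lambda_i=4^{\min(i,r)}$, $(\log\lambda_i)_i$ is affine on all of $[r,d]$, and $[r,s]$ is not maximal unless $s=d$. Moreover the identity block competes with the unipotent one; for example $\mu_s\ge2(s-r-1)>0$ if $s-r\ge2$ and $s<d$.

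Your suggested repair, scaling only the outer blocks by powers of $2$, also fails. If both factors exceed $1$, the unipotent block has the smallest modulus, so its affine interval becomes $[d-(s-r),d]$.

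The unipotent block must carry a strictly intermediate modulus, for example $[c_1]_{E^r}\times([c_2]\circ u_{s-r})\times[c_3]_{E^{d-s}}$ with $c_1>c_2>c_3\ge1$ (the paper uses $5,3,2$). Then:
\begin{itemize}
\item the slopes $2\log c_1>2\log c_2>2\log c_3$ make $[r,s]$ maximal affine;
\item for $r\le i\le s$, the expansion of $\deg_i(f_{r,s}^n)$ has the unique dominant term $(a,b,c)=(r,i-r,0)$, contributing $\lambda_i^n\deg_{i-r}(u_{s-r}^n)\asymp\lambda_i^n n^{2(i-r)(s-i)}$ by part (1);
\item the degree--norm comparison converts this exponent into $\mu_i$.
\end{itemize}

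\textbf{Minor points.} The reduction to $\Theta$ is cleaner in the ample cone, using that $H_j-\epsilon\Theta$ and $\Theta-\epsilon H_j$ are ample and stay ample under $(u^n)^*$. Going through mixed discriminants of Hermitian matrices over $\End(E)\otimes\bR$ is harder, since that ring is the quaternions when $E$ is supersingular. You should also record that $\mu_i(u)=2i(d-i)$, which follows from part (1) via degree--norm comparison, gives $\Delta\mu_i=4$; this is the sharpness of the constant $4$.
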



Keep the notation of \cref{thm:sharpness-main}.
Let
\[
\Theta\coloneqq \sum_{i=1}^d \pr_i^*[0].
\]
which gives a principal polarization $\phi_{\Theta}\colon A\xrightarrow{\isom} \widehat A$.
The pullback $u^*|_{\N^1(A)_\bR}$ is also unipotent (see \cite[Proof of Theorem~1.8]{Hu-GK-AV} and \cite[Theorem~1.9]{Hu24}).
Denote
\[
L\coloneqq \log U \qquad\text{and}\qquad N\coloneqq \log(u^*|_{\N^1(A)_\bR}).
\]

For real $t$, define
\[
U^t\coloneqq e^{tL}\in \GL_d(\bR) \qquad\text{and}\qquad \Theta_t\coloneqq e^{tN}\Theta \in \N^1(A)_\bR.
\]

For every $\gamma\in\Comp(d)$, the canonical polynomial extension from \cref{def:mixed-orbit-functions} is
\[
\Phi_{u,\gamma,\boldsymbol{\Theta}}(\bt) \coloneqq (\Theta_{t_1})^{\gamma_1}\cdots (\Theta_{t_d})^{\gamma_d} \in \bR[\bt].
\]
The lemma below expresses $\Phi_{u,\gamma,\boldsymbol{\Theta}}$ as the sum of squares of certain determinants via the Riemann--Roch theorem and the Cauchy--Binet formula.

\begin{lemma}[Sum-of-squares formula]\label{lem:sum-square}
For every $\gamma\in\Comp(d)$ and $\bt\in\bR^d$,
\begin{equation}\label{eq:theta-sum-of-squares}
\frac{\Phi_{u,\gamma,\boldsymbol{\Theta}}(\bt)}{\gamma!}=
\sum_{\substack{\Gamma_j\subseteq\{1,\ldots,d\}\\[2pt] |\Gamma_j|=\gamma_j,\, 1\le j\le d}}\det
\begin{pmatrix}
(U^{t_1})_{\Gamma_1}\\
\vdots\\
(U^{t_d})_{\Gamma_d}
\end{pmatrix}^{\!2},
\end{equation}
where $(U^{t_j})_{\Gamma_j}$ denotes the submatrix formed by the rows indexed by $\Gamma_j$, and the corresponding block of rows is empty when $\gamma_j=0$.
\end{lemma}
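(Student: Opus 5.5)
The plan is to reduce both sides to integer $t$, where $\Theta_n=(u^n)^*\Theta$, and then to extend by polynomiality. The left side $\Phi_{u,\gamma,\boldsymbol{\Theta}}(\bt)$ is a polynomial in $\bt$ because $N$ is nilpotent. The right side is a polynomial in $\bt$ because $L=\log U$ is nilpotent, so $U^t=e^{tL}$ has polynomial entries. Two polynomials agreeing on $\bZ^d$ (or on $\bZ_{\ge0}^d$) are equal, as in the proof of \cref{lem:local-transfer}. It therefore suffices to prove \eqref{eq:theta-sum-of-squares} for $\bt=\bn\in\bZ^d$.

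\textbf{Step 1: the polynomial attached to a matrix.} For an endomorphism $\alpha$ of $A=E^d$ given by an integer matrix $M$, acting on points $x=(x_1,\dots,x_d)$ by $x\mapsto Mx$ (with the convention matching how $u$ is induced by $U$), we have the classical identity
\[
\deg(\alpha)=\det(M)^2,
\qquad
\chi(\alpha^*\Theta)=\frac{(\alpha^*\Theta)^d}{d!}.
\]
For $\alpha^*\Theta=\sum_i \pr_i^*[0]\circ\alpha$ with $\pr_i\circ\alpha=\sum_k M_{ik}x_k$, I will compute the intersection numbers directly. Consider the polynomial
\[
x_1,\dots,x_d\ \longmapsto\ \Bigl(\sum_j x_j\,(u^{n_j})^*\Theta\Bigr)^d .
\]
By the Riemann--Roch theorem on abelian varieties, for a line bundle $\mathcal L$ one has $(\mathcal L^d)^2=(d!)^2\deg\phi_{\mathcal L}$. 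For $\mathcal L=\sum_j x_j (u^{n_j})^*\Theta$ with $x_j\in\bZ_{>0}$, the polarization is
\[
\phi_{\mathcal L}=\sum_j x_j\,(u^{n_j})^\vee\phi_\Theta u^{n_j}.
\]
Under $\phi_\Theta\colon A\cong\widehat A$, this corresponds to the matrix $\sum_j x_j (U^{n_j})^{\mathsf T}U^{n_j}$ acting on $E^d$. Its degree as an isogeny is $\det\bigl(\sum_j x_j (U^{n_j})^{\mathsf T}U^{n_j}\bigr)^2$; I still need to check the convention relating row and column actions. Since $\mathcal L$ is ample, both quantities are positive, and so
\[
\frac{\mathcal L^d}{d!}=\det\Bigl(\sum_j x_j\,(U^{n_j})^{\mathsf T}U^{n_j}\Bigr).
\]

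\textbf{Step 2: Cauchy--Binet.} Write $\sum_j x_j (U^{n_j})^{\mathsf T}U^{n_j}=W^{\mathsf T}D_xW$, where $W$ is the $d^2\times d$ matrix stacking $U^{n_1},\dots,U^{n_d}$, and $D_x$ is diagonal with $x_j$ repeated $d$ times. Cauchy--Binet gives
\[
\det\bigl(W^{\mathsf T}D_xW\bigr)=\sum_{S}\Bigl(\prod_{\text{rows in }S}x\Bigr)\det(W_S)^2,
\]
where $S$ ranges over $d$-row subsets. Write $S$ as a disjoint union of $\Gamma_j\subseteq\{1,\dots,d\}$ taken from block $j$; the monomial attached to $S$ is then $x^\gamma$ with $\gamma_j=|\Gamma_j|$. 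On the other side, $\mathcal L^d/d!=\sum_\gamma Q_\gamma\, x^\gamma/\gamma!$ by \cref{def:mixed-Q-coefficients}. Both sides are polynomials in $x$ that agree on positive integers, so their coefficients agree. This yields \eqref{eq:theta-sum-of-squares} at integer $\bt$, after noting that row order within the stacked matrix only affects signs, which disappear upon squaring.

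\textbf{Main obstacle.} The expected difficulty is the conventions in Step 1: identifying $\phi_{\alpha^*\Theta}$ with $M^{\mathsf T}M$ (the Rosati structure for the product principal polarization, where $\End(E)\supseteq\bZ$ is used only through integer matrices), and justifying the degree formula $\deg(M)=\det(M)^2$ for an integer matrix on $E^d$. For the latter I would reduce to elementary matrices via Smith normal form; for the former I would compute on the factors using the seesaw principle. If $\End(E)$ is larger this causes no trouble, since only integer entries occur.
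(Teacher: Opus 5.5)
Your proposal is correct and follows essentially the same route as the paper: reduce to integer times by polynomiality, use Riemann--Roch, the fact that the Rosati involution of the product polarization is transposition on integral matrices, and $\deg M=(\det M)^2$ to identify $\frac{1}{d!}\bigl(\sum_j x_j\Theta_{n_j}\bigr)^d$ with $\det\bigl(\sum_j x_j (U^{n_j})^{\T}U^{n_j}\bigr)$, then expand by Cauchy--Binet and compare coefficients of $\bx^\gamma$. One small correction: the positivity of that determinant, which you need to remove the square, follows from $\sum_j x_j (U^{n_j})^{\T}U^{n_j}$ being positive definite for $x_j>0$, not from ampleness.
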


\begin{proof}
Fix $\gamma\in \Comp(d)$.
Both sides of \eqref{eq:theta-sum-of-squares} are polynomials in $\bt$.
Indeed, $N$ and $L=\log U$ are nilpotent, so that $e^{tN}\Theta$ and
$U^t=e^{tL}$ depend polynomially on $t$.
It therefore suffices to prove the identity for $\bt=\bn\in\bZ_{\ge0}^d$.

Recall that $\Theta_n=(u^n)^*\Theta$.
For $\bx\in\bZ_{>0}^d$, set
\[
\Theta(\bx)\coloneqq \sum_{j=1}^d x_j\Theta_{n_j}.
\]
Under the product principal polarization $\Theta$, the Rosati involution is transpose on integral matrices.
Hence the endomorphism
\[
\phi_\Theta^{-1}\phi_{\Theta(\bx)}
=
\sum_{j=1}^d x_j(u^{n_j})^\dagger u^{n_j}
\]
is represented by the matrix
\[
S(\bx)\coloneqq \sum_{j=1}^d x_j(U^{n_j})^{\T}U^{n_j}\in \Mat_d(\bZ)\cap\GL_d(\bQ).
\]
The Riemann--Roch theorem \cite[\S16]{Mumford} and $\deg(M\colon E^d\to E^d)=(\det M)^2$ then yield
\[
\left(\frac{\Theta(\bx)^d}{d!}\right)^2 = \deg(\phi_{\Theta(\bx)})
= \deg(\phi_\Theta^{-1}\phi_{\Theta(\bx)}) = \det S(\bx)^2.
\]
Both quantities before squaring are positive, so
\begin{equation}\label{eq:intersection-determinant}
\frac{1}{d!}
\left(\sum_{j=1}^d x_j\Theta_{n_j}\right)^d
=
\det\left(\sum_{j=1}^d
x_j(U^{n_j})^{\T}U^{n_j}\right).
\end{equation}
Since both sides are polynomials in $\bx$, this identity holds identically.

Set
\[
\cU=
\begin{pmatrix}
\sqrt{x_1}U^{n_1}\\
\vdots\\
\sqrt{x_d}U^{n_d}
\end{pmatrix}
\in \Mat_{d^2\times d}(\bC).
\]
Since
\[
\overline{\cU}^{\T}\cU
=
\sum_{j=1}^d x_j(U^{n_j})^{\T}U^{n_j},
\]
the Cauchy--Binet formula \cite[\S0.8.7, p.~28]{HJ13} applied to $\overline{\cU}^{\T}\cU$ gives
\[
\det\Biggl(\sum_{j=1}^d
x_j(U^{n_j})^{\T}U^{n_j}\Biggr)
=
\sum_{\substack{\Gamma_j\subseteq\{1,\ldots,d\}\\[2pt]
                 \sum_j|\Gamma_j|=d}}
\left(\prod_{j=1}^d x_j^{|\Gamma_j|}\right)
\det
\begin{pmatrix}
(U^{n_1})_{\Gamma_1}\\
\vdots\\
(U^{n_d})_{\Gamma_d}
\end{pmatrix}^{\!2}.
\]
The coefficient of $\bx^\gamma$ on the left-hand side of \eqref{eq:intersection-determinant} is $\Phi_{u,\gamma,\boldsymbol\Theta}(\bn)/\gamma!$.
Comparing coefficients proves \eqref{eq:theta-sum-of-squares} for $\bn\in\bZ_{\ge0}^d$.
Polynomiality together with the Zariski density of $\bZ_{\ge0}^d$ completes the proof.
\end{proof}

\begin{proof}[Proof of \cref{thm:sharpness-main}\ref{thm:abelian-mixed-sharpness}]
Fix $\gamma\in\Comp(d)$ and set
\[
m_\gamma
\coloneqq
\sum_{1\le j<k\le d}\gamma_j\gamma_k
=
\frac12\bigl(d^2-\|\gamma\|_2^2\bigr)
=
\frac{b_\gamma}{2}.
\]

We first consider $\bH=\boldsymbol\Theta$.
For every $1\le j\le d$, take $\Gamma_j^\circ\coloneqq \{1,\ldots,\gamma_j\}$ and denote by $\Delta^\circ(\bt)$ the corresponding determinant in the sum-of-squares formula \eqref{eq:theta-sum-of-squares}.
We index the row and column of $(U^{t_j})_{\Gamma_j^\circ}$ by $1\le a_j\le \gamma_j$ and $1\le c\le d$, respectively.
Since for any $t_j\in \bR$,
\[
U^{t_j}=(I_d+J_d)^{t_j}=\sum_{r=0}^{d-1}\binom{t_j}{r}J_d^r,
\]
the $(a_j,c)$-entry of $(U^{t_j})_{\Gamma_j^\circ}$ is $\binom{t_j}{c-a_j}$ if $a_j\le c$; zero, otherwise.
Then in every nonzero term of the Leibniz expansion of $\Delta^\circ$, the maximal total degree is
\[
\sum_{c=1}^d c
-\sum_{j=1}^d\sum_{a_j=1}^{\gamma_j}a_j
=
\frac12\left(d^2-\sum_{j=1}^d\gamma_j^2\right)
=
m_\gamma.
\]
Consequently, the degree-$m_\gamma$ homogeneous part of $\Delta^\circ$ is obtained by replacing each binomial coefficient $\binom{t_j}{c-a_j}$ by its leading term:
\[
[\Delta^\circ]_{m_\gamma}
=
\det\left(
\frac{t_j^{\,c-a_j}}{(c-a_j)!}
\right)_{\substack{1\le j\le d,\ 1\le a_j\le\gamma_j\\[2pt]
1\le c\le d}},
\]
where the block structure is inherited from $\Delta^\circ$ and for every $1\le j\le d$, the $(a_j,c)$-entry is zero if $a_j>c$.
The confluent Vandermonde formula \cite[Theorem~20]{Krattenthaler99} gives
\begin{equation}\label{eq:sharp-confluent-vandermonde}
[\Delta^\circ]_{m_\gamma}
=
C_\gamma \prod_{1\le j<k\le d}(t_k-t_j)^{\gamma_j\gamma_k},
\qquad C_\gamma\neq0.
\end{equation}
Thus, if the entries of $\bl$ are pairwise distinct on
$\supp(\gamma)$,
\[
\Delta^\circ(n\bl)
\sim
C_\gamma
\prod_{1\le j<k\le d}(\ell_k-\ell_j)^{\gamma_j\gamma_k}
n^{m_\gamma},
\]
whose leading coefficient is nonzero. The sum-of-squares formula \eqref{eq:theta-sum-of-squares} therefore yields
\[
\Phi_{u,\gamma,\boldsymbol\Theta}(n\bl)
\ge
\gamma!\,\Delta^\circ(n\bl)^2
\gtrsim n^{b_\gamma}.
\]
Combining with \eqref{eq:intro-zero-entropy-ray} in \cref{thm:zero-entropy-mixed}, we get
\begin{equation}\label{eq:theta-ray-product}
\Phi_{u,\gamma,\boldsymbol\Theta}(n\bl)
\asymp n^{b_\gamma}.
\end{equation}

Now let $\bH=(H_1,\ldots,H_d)$ be arbitrary.
Choose $0<\epsilon<1$ sufficiently small such that $H_j-\epsilon\Theta$ and $\Theta-\epsilon H_j$ are ample for every $j$.
Since $u$ is an automorphism, pullback preserves the ample cone, and monotonicity of mixed intersections gives
\[
\epsilon^d\Phi_{u,\gamma,\boldsymbol\Theta}(\bn)
\le
\Phi_{u,\gamma,\bH}(\bn)
\le
\epsilon^{-d}\Phi_{u,\gamma,\boldsymbol\Theta}(\bn)
\qquad (\bn\in\bZ^d).
\]
Together with \eqref{eq:theta-ray-product}, this proves \eqref{eq:theta-sharp-ray}.
Taking, for instance, $\bl=(1,2,\ldots,d)$ shows that $\deg_{\tot}\Phi_{u,\gamma,\bH}\ge b_\gamma$; combined with \eqref{eq:intro-zero-entropy-mixed-bound} in \cref{thm:zero-entropy-mixed}, we prove \eqref{eq:theta-mixed-sharp}.

Finally, for $0<i<d$, take
\[
\gamma=(i,d-i,0,\ldots,0),\qquad
\bH=(H,\ldots,H),\qquad
\bl=(1,0,\ldots,0).
\]
Then
\[
\Phi_{u,\gamma,\bH}(n\bl)=\deg_i(u^n),
\qquad
b_\gamma=d^2-i^2-(d-i)^2=2i(d-i),
\]
so \eqref{eq:degree-growth-sharp} follows from \eqref{eq:theta-sharp-ray}.
For $i=0,d$, both sides are constant as $u$ is an automorphism.
\end{proof}

\begin{proof}[Proof of \cref{thm:sharpness-main}\ref{thm:abelian-affine-interval-sharpness}]
Fix $0\le r<s\le d$.
Set $k=s-r$ and $\ell=d-s$.
After omitting zero-dimensional factors, define
\[
f_{r,s} \coloneqq [5]_{E^r}\times\bigl([3]\circ u_k\bigr)_{E^k} \times[2]_{E^{\ell}},
\]
where $u_k$ is induced by $I_k+J_k$, with $u_1=\id_E$.
By \cref{thm:sharpness-main}\ref{thm:abelian-mixed-sharpness}, we compute the ordinary $i$-th degrees of $f_{r,s}$ with respect to the ample divisor $\Theta$.
After a cumbersome and tedious calculation on the product expansion, we have
\begin{equation}\label{eq:degree-growth-frs}
\deg_i(f_{r,s}^n) = \bigl((f_{r,s}^n)^*\Theta\bigr)^i \cdot \Theta^{d-i} \asymp
\sum_{\substack{a+b+c=i\\[2pt]
0\le a\le r,\;0\le b\le k,\;0\le c\le \ell}}
5^{2an}3^{2bn}2^{2cn}n^{2b(k-b)}.
\end{equation}
Indeed, write $\Theta = \Theta|_{E^r} + \Theta|_{E^k} + \Theta|_{E^\ell}$.
By commutativity and the fact that $[m]^*|_{\N^1(B)_\bR}=m^2\id$ for every $m\in\bZ$ and every abelian variety $B$,
\[
(f_{r,s}^n)^*\Theta = 5^{2n}\Theta|_{E^r} + 3^{2n} (u_k^n)^*\Theta|_{E^k} + 2^{2n}\Theta|_{E^\ell}.
\]
Hence
\[
\bigl((f_{r,s}^n)^*\Theta\bigr)^i = \sum_{\substack{a,b,c\ge0\\[2pt]
a+b+c=i}}
\binom{i}{a,b,c} 5^{2an}3^{2bn}2^{2cn}\, \Theta^a|_{E^r} \cdot (u_k^n)^*\Theta^b|_{E^k} \cdot \Theta^c|_{E^\ell}.
\]
On the other hand, after expanding $\Theta^{d-i} = (\Theta|_{E^r} + \Theta|_{E^k} + \Theta|_{E^\ell})^{d-i}$, only terms with complementary exponents $r-a$, $k-b$, and $\ell-c$ survive in the expansion of $\deg_i(f_{r,s}^n)$.
Applying \cref{thm:sharpness-main}\ref{thm:abelian-mixed-sharpness} to the automorphism $u_k$ of $E^k$, one has
\[
\deg_b(u_k^n) = (u_k^n)^*\Theta^b|_{E^k}\cdot\Theta^{k-b}|_{E^k} \asymp n^{2b(k-b)}.
\]
The asymptotic formula \eqref{eq:degree-growth-frs} follows with those multinomial coefficients absorbed into $\asymp$, that depend only on $r,s,d,i$, but not on $n$.

Since $5>3>2$, the exponentially dominant triple is obtained uniquely
by filling first the $5$-block, then the $3$-block, and finally the
$2$-block. Hence
\[
\lambda_i(f_{r,s})
=
\begin{cases}
5^{2i},&0\le i\le r,\\
5^{2r}3^{2(i-r)},&r\le i\le s,\\
5^{2r}3^{2k}2^{2(i-s)},&s\le i\le d.
\end{cases}
\]
Thus the three successive affine pieces of $(\log\lambda_i(f_{r,s}))_i$ have slopes
\[
2\log5,\qquad 2\log3,\qquad 2\log2,
\]
with the irrelevant pieces omitted when $r=0$ or $s=d$.
It follows that $[r,s]$ is a maximal affine interval of $(\log\lambda_i(f_{r,s}))_i$.

For $r\le i\le s$, the unique dominant triple in \eqref{eq:degree-growth-frs} is
$(a,b,c)=(r,i-r,0)$, and hence
\[
\deg_i(f_{r,s}^n)
\asymp
\bigl(5^{2r}3^{2(i-r)}\bigr)^n
n^{2(i-r)(k-i+r)}
=
\lambda_i(f_{r,s})^n
n^{2(i-r)(s-i)}.
\]
The degree--norm comparison, \cref{prop:degree-norm-comparison}, now proves \eqref{eq:positive-affine-interval}.
\end{proof}

\begin{proof}[Proof of \cref{thm:sharpness-main}\ref{thm:abelian-hodge-sharpness}]
The actions of $u^*$ on $H^{1,0}(A,\bC)$ and $H^{0,1}(A,\bC)$ are both regular unipotent and represented by $U$.
It is well known that the largest Jordan block of the $p$-th compound matrix $C_p(U)$, which represents the induced action on $H^{p,0}(A,\bC)=\bigwedge^p H^{1,0}(A,\bC)$, has size $p(d-p)+1$ (see, e.g., \cite[Theorem~IV]{Littlewood36}).
On the other hand, since
\[
H^{p,q}(A,\bC) = \bigwedge\nolimits^pH^{1,0}(A,\bC) \otimes \bigwedge\nolimits^qH^{0,1}(A,\bC),
\]
the largest Jordan block of $u^*|_{H^{p,q}(A,\bC)}$ has size
\[
(p(d-p)+1)+(q(d-q)+1)-1 = p(d-p)+q(d-q)+1.
\]
This is also a well-known fact about the Kronecker product (see \cite[Theorem~I]{Littlewood36}).
\end{proof}



\begin{thebibliography}{ATVdB90}

\bibitem[ATVdB90]{ATVdB90}
M.~Artin, J.~Tate, and M.~Van~den Bergh, \emph{Some algebras associated to automorphisms of elliptic curves}, The {G}rothendieck {F}estschrift, {V}ol. {I}, Progr. Math., vol.~86, Birkh\"{a}user Boston, Boston, MA, 1990, pp.~33--85. \MR{1086882}

\bibitem[AVdB90]{AVdB90}
M.~Artin and M.~Van~den Bergh, \emph{Twisted homogeneous coordinate rings}, J. Algebra \textbf{133} (1990), no.~2, 249--271. \MR{1067406}

\bibitem[BH20]{BH20}
Petter Br\"and\'en and June Huh, \emph{Lorentzian polynomials}, Ann. of Math. (2) \textbf{192} (2020), no.~3, 821--891. \MR{4172622}

\bibitem[BL26]{BL26}
Petter Br\"and\'en and Jonathan Leake, \emph{Lorentzian polynomials on cones}, Forum Math. Sigma \textbf{14} (2026), Paper No. e16, 34. \MR{5021615}

\bibitem[Bor91]{Borel91}
Armand Borel, \emph{Linear algebraic groups}, second ed., Graduate Texts in Mathematics, vol. 126, Springer-Verlag, New York, 1991. \MR{1102012}

\bibitem[CPR21]{CPR21}
Serge Cantat and Olga Paris-Romaskevich, \emph{Automorphisms of compact {K}\"{a}hler manifolds with slow dynamics}, Trans. Amer. Math. Soc. \textbf{374} (2021), no.~2, 1351--1389. \MR{4196396}

\bibitem[Dan20]{Dang20}
Nguyen-Bac Dang, \emph{Degrees of iterates of rational maps on normal projective varieties}, Proc. Lond. Math. Soc. (3) \textbf{121} (2020), no.~5, 1268--1310. \MR{4133708}

\bibitem[DF01]{DF01}
Jeffrey Diller and Charles Favre, \emph{Dynamics of bimeromorphic maps of surfaces}, Amer. J. Math. \textbf{123} (2001), no.~6, 1135--1169. \MR{1867314}

\bibitem[DF21]{DF21}
Nguyen-Bac Dang and Charles Favre, \emph{Spectral interpretations of dynamical degrees and applications}, Ann. of Math. (2) \textbf{194} (2021), no.~1, 299--359. \MR{4276288}

\bibitem[Din05]{Dinh05}
Tien-Cuong Dinh, \emph{Suites d'applications m\'{e}romorphes multivalu\'{e}es et courants laminaires}, J. Geom. Anal. \textbf{15} (2005), no.~2, 207--227. \MR{2152480}

\bibitem[DLOZ22]{DLOZ22}
Tien-Cuong Dinh, Hsueh-Yung Lin, Keiji Oguiso, and De-Qi Zhang, \emph{Zero entropy automorphisms of compact {K}\"{a}hler manifolds and dynamical filtrations}, Geom. Funct. Anal. \textbf{32} (2022), no.~3, 568--594. \MR{4431123}

\bibitem[DN06]{DN06}
Tien-Cuong Dinh and Vi\^et-Anh Nguy\^en, \emph{The mixed {H}odge-{R}iemann bilinear relations for compact {K}\"ahler manifolds}, Geom. Funct. Anal. \textbf{16} (2006), no.~4, 838--849. \MR{2255382}

\bibitem[DP04]{DP04}
Jean-Pierre Demailly and Mihai Paun, \emph{Numerical characterization of the {K}\"ahler cone of a compact {K}\"ahler manifold}, Ann. of Math. (2) \textbf{159} (2004), no.~3, 1247--1274. \MR{2113021}

\bibitem[DS05]{DS05a}
Tien-Cuong Dinh and Nessim Sibony, \emph{Une borne sup\'erieure pour l'entropie topologique d'une application rationnelle}, Ann. of Math. (2) \textbf{161} (2005), no.~3, 1637--1644. \MR{2180409}

\bibitem[FL17]{FL17a}
Mihai Fulger and Brian Lehmann, \emph{Positive cones of dual cycle classes}, Algebr. Geom. \textbf{4} (2017), no.~1, 1--28. \MR{3592463}

\bibitem[FW12]{FW12}
Charles Favre and Elizabeth Wulcan, \emph{Degree growth of monomial maps and {M}c{M}ullen's polytope algebra}, Indiana Univ. Math. J. \textbf{61} (2012), no.~2, 493--524. \MR{3043585}

\bibitem[Gro90]{Gromov90}
Mikha{\"{\i}}l Gromov, \emph{Convex sets and {K}\"ahler manifolds}, Advances in differential geometry and topology, World Sci. Publ., Teaneck, NJ, 1990, pp.~1--38. \MR{1095529}

\bibitem[Gro03]{Gromov03}
\bysame, \emph{On the entropy of holomorphic maps}, Enseign. Math. (2) \textbf{49} (2003), no.~3-4, 217--235, preprint SUNY (1977). \MR{2026895}

\bibitem[HHM{\etalchar{+}}25]{HHMWW}
Daoji Huang, June Huh, Mateusz Micha{\l}ek, Botong Wang, and Shouda Wang, \emph{Realizations of homology classes and projection areas}, preprint (2025), 40 pp., \arxiv{2505.08881v2}.

\bibitem[HJ13]{HJ13}
Roger~A. Horn and Charles~R. Johnson, \emph{Matrix analysis}, second ed., Cambridge University Press, Cambridge, 2013. \MR{2978290}

\bibitem[HJ25]{HJ25}
Fei Hu and Chen Jiang, \emph{An upper bound for polynomial volume growth of automorphisms of zero entropy}, to appear in Peking Math. J. (2025), 27 pp., \arxiv{2408.15804v2}, \doi{10.1007/s42543-025-00106-1}.

\bibitem[HJ26]{HJ-plov-lower}
\bysame, \emph{A lower bound for polynomial volume growth of automorphisms of zero entropy}, preprint (2026), 30 pp., \arxiv{2604.21398v2}.

\bibitem[Hu20]{Hu20a}
Fei Hu, \emph{A theorem of {T}its type for automorphism groups of projective varieties in arbitrary characteristic}, Math. Ann. \textbf{377} (2020), no.~3-4, 1573--1602, With an appendix by Tomohide Terasoma. \MR{4126902}

\bibitem[Hu24a]{Hu24}
\bysame, \emph{Eigenvalues and dynamical degrees of self-maps on abelian varieties}, J. Algebraic Geom. \textbf{33} (2024), no.~2, 265--293. \MR{4705374}

\bibitem[Hu24b]{Hu-GK-AV}
\bysame, \emph{Polynomial volume growth of quasi-unipotent automorphisms of abelian varieties (with an appendix in collaboration with {C}hen {J}iang)}, Int. Math. Res. Not. IMRN (2024), no.~8, 6374--6399. \MR{4735629}

\bibitem[Huh26]{Huh-SRI}
June Huh, \emph{Volume polynomials}, Proceedings of Symposia in Pure Mathematics, Algebraic Geometry: Fort Collins 2025, to appear, 24 pp., \arxiv{2601.13249v4}.

\bibitem[Hum75]{GTM21}
James~E. Humphreys, \emph{Linear algebraic groups}, Springer-Verlag, New York-Heidelberg, 1975, Graduate Texts in Mathematics, No. 21. \MR{0396773}

\bibitem[JL23]{JL23}
Chen Jiang and Zhiyuan Li, \emph{Algebraic reverse {K}hovanskii--{T}eissier inequality via {O}kounkov bodies}, Math. Z. \textbf{305} (2023), no.~2, Paper No. 26, 14 pp. \MR{4645768}

\bibitem[Kee00]{Keeler00}
Dennis~S. Keeler, \emph{Criteria for {$\sigma$}-ampleness}, J. Amer. Math. Soc. \textbf{13} (2000), no.~3, 517--532. \MR{1758752}

\bibitem[Kra99]{Krattenthaler99}
C.~Krattenthaler, \emph{Advanced determinant calculus}, S\'em. Lothar. Combin. \textbf{42} (1999), Art. B42q, 67, The Andrews Festschrift (Maratea, 1998). \MR{1701596}

\bibitem[LB19]{LB19}
Federico Lo~Bianco, \emph{On the cohomological action of automorphisms of compact {K}\"{a}hler threefolds}, Bull. Soc. Math. France \textbf{147} (2019), no.~3, 469--514. \MR{4030548}

\bibitem[Lit36]{Littlewood36}
D.~E. Littlewood, \emph{On induced and compound matrices}, Proc. London Math. Soc. (2) \textbf{40} (1936), no.~1, 370--381. \MR{1575831}

\bibitem[LOZ25]{LOZ25}
Hsueh-Yung Lin, Keiji Oguiso, and De-Qi Zhang, \emph{Polynomial log-volume growth and the {GK}-dimensions of twisted homogeneous coordinate rings}, J. Noncommut. Geom. \textbf{19} (2025), no.~2, 451--493. \MR{4886492}

\bibitem[LX16]{LX16}
Brian Lehmann and Jian Xiao, \emph{Convexity and {Z}ariski decomposition structure}, Geom. Funct. Anal. \textbf{26} (2016), no.~4, 1135--1189. \MR{3558307}

\bibitem[LX17]{LX17}
\bysame, \emph{Correspondences between convex geometry and complex geometry}, \'{E}pijournal Geom. Alg\'{e}brique \textbf{1} (2017), Art. 6, 29. \MR{3743109}

\bibitem[Mum70]{Mumford}
David Mumford, \emph{Abelian varieties}, Tata Institute of Fundamental Research Studies in Mathematics, vol.~5, Published for the Tata Institute of Fundamental Research, Bombay; Oxford University Press, London, 1970. \MR{0282985}

\bibitem[Van68]{Vandergraft68}
James~S. Vandergraft, \emph{Spectral properties of matrices which have invariant cones}, SIAM J. Appl. Math. \textbf{16} (1968), 1208--1222. \MR{244284}

\bibitem[Xia15]{Xiao15}
Jian Xiao, \emph{Weak transcendental holomorphic {M}orse inequalities on compact {K}\"{a}hler manifolds}, Ann. Inst. Fourier (Grenoble) \textbf{65} (2015), no.~3, 1367--1379. \MR{3449182}

\bibitem[Xie24]{Xie-SC}
Junyi Xie, \emph{Algebraic dynamics and recursive inequalities}, preprint (2024), 43 pp., \arxiv{2402.12678v2}.

\bibitem[Yom87]{Yomdin87}
Yosef Yomdin, \emph{Volume growth and entropy}, Israel J. Math. \textbf{57} (1987), no.~3, 285--300. \MR{889979}

\end{thebibliography}
\newcommand{\etalchar}[1]{$^{#1}$}
\providecommand{\bysame}{\leavevmode\hbox to3em{\hrulefill}\thinspace}

\end{document}